\renewcommand{\a}{\alpha}
\renewcommand{\b}{\beta}
\newcommand{\g}{\gamma}
\renewcommand{\d}{\delta}
\newcommand{\e}{\varepsilon}
\newcommand{\f}{\varphi}
\newcommand{\s}{\sigma}
\newcommand{\Si}{\Sigma}
\renewcommand{\k}{\kappa}
\renewcommand{\l}{\lambda}
\renewcommand{\O}{\Omega}
\renewcommand{\o}{\omega}
\newcommand{\cC}{{\mathcal C}}
\newcommand{\cT}{{\mathcal T}}
\newcommand{\cB}{{\mathcal B}}
\newcommand{\cL}{{\mathcal L}}
\newcommand{\cE}{{\mathcal E}}
\newcommand{\cU}{{\mathcal U}}
\newcommand{\cN}{{\mathcal N}}
\newcommand{\cH}{{\mathcal H}}
\newcommand{\cW}{\mathcal W}
\newcommand{\cI}{\mathcal I}
\newcommand{\bR}{\mathbb R}
\newcommand{\bZ}{\mathbb Z}
\newcommand{\bE}{\mathbb E}
\newcommand{\bV}{\mathbb V}
\newcommand{\Diff}{\mathrm{Diff}}
\newcommand{\Ric}{\mathrm{Ric}}
\newcommand{\tT}{\mathsf{T}}
\newcommand{\be}{\begin{equation}}
\newcommand{\ee}{\end{equation}}
\newcommand{\bes}{\begin{equation*}}
\newcommand{\ees}{\end{equation*}}
\newcommand{\tr}{\mathrm{tr}}
\newcommand{\beaa}{\begin{eqnarray*}}
\newcommand{\bea}{\begin{eqnarray}}
\newcommand{\beal}[1]{\begin{eqnarray}\label{#1}}
\newcommand{\bean}{\begin{eqnarray}\nonumber}
\newcommand{\beadl}[1]{\begin{deqarr}\label{#1}}
\newcommand{\eeadl}[1]{\arrlabel{#1}\end{deqarr}}
\newcommand{\eeal}[1]{\label{#1}\end{eqnarray}}
\newcommand{\eead}[1]{\end{deqarr}}
\newcommand{\eea}{\end{eqnarray}}
\newcommand{\eeaa}{\end{eqnarray*}}
\newcommand{\p}{\partial}
\renewcommand{\to}{\rightarrow}
\renewcommand{\exp}{\operatorname{exp}}
\renewcommand{\phi}{\varphi}
\renewcommand{\epsilon}{\varepsilon}
\renewcommand{\hat}{\widehat}
\newcommand{\<}{\langle}
\renewcommand{\>}{\rangle}
\newcommand{\dm}{{\partial M}}
\newcommand{\w}{\widetilde}
\theoremstyle{plain}
\newtheorem{lemma}{Lemma}[section]
\newtheorem{proposition}[lemma]{Proposition}
\newtheorem{theorem}[lemma]{Theorem}
\theoremstyle{remark}
\theoremstyle{definition}
\newtheorem{remark}[lemma]{Remark}
\def\blacksquare{\hbox to .60em {\vrule width .60em height .60em}}
\numberwithin{equation}{section}
\begin{document}

\title[ ]{Well-Posed Geometric Boundary data in General Relativity, I: Dirichlet boundary data} 

\author{Zhongshan An and Michael T. Anderson}

\address{Institute of Geometry and Physics, 
University of Science and Technology of China,
No. 99 Xiupu Road, Shanghai, China, 201305}
\email{zhshan.an@gmail.com}

\address{Department of Mathematics, 
Stony Brook University,
Stony Brook, NY 11794}
\email{michael.anderson@stonybrook.edu}

\thanks{MSC 2010: 35L53, 35Q46, 58J45, 83C05\\
Keywords: initial boundary value problems, hyperbolic systems, Einstein equations, energy estimates, Nash-Moser theory}

\begin{abstract} 
In this first work in a series, we prove the local-in-time well-posedness of the IBVP for the vacuum Einstein equations with Dirichlet 
boundary data on a finite timelike boundary, provided the Brown-York stress tensor of the boundary is a Lorentz metric of the same signature 
(up to an overall sign) as the induced Lorentz metric on the boundary. This is a convexity-type assumption which is an exact analog of a similar 
result in the Riemannian setting. This assumption on the (extrinsic) Brown-York tensor cannot be dropped in general. 

\end{abstract}

\maketitle 

\section{Introduction}

  This work is the first in a series on the initial boundary value problem (IBVP) in general relativity. Let $(M, g)$ be a globally hyperbolic 
vacuum solution of the Einstein equations 
\be \label{vacuum}
\Ric_g = 0,
\ee
on a finite domain $M \simeq I\times S$, where the Cauchy surface $S$ is a compact, connected $n$-manifold with boundary $\p S = \Si$ and $\cC = I\times \Si$ is a 
timelike boundary $\cC$ of $M$. Consider the space $\bE$ of all such solutions; this is also known as the covariant pre-phase space on a domain with 
finite boundary, cf.~\cite{HW} and references therein. The quotient space 
$$\cE = \bE/{\rm Diff}_0(M)$$
of $\bE$ by the group $\Diff_0(M)$ of diffeomorphisms $\f$ of $M$ equal to the identity on $S\cup \cC$ is the moduli space of vacuum solutions, closely related to the 
covariant phase space of vacuum Einstein metrics. 

  The basic issue here is whether the space $\cE$ can be characterized and be given an effective description at least locally in time  in terms of 
initial data on $S$ and boundary data on $\cC$. Thus, let $\cI_0$ denote the space of initial data, i.e.~pairs $(\g_S, \k)$ consisting of a Riemannian 
metric $\g_S$ on $S$ and a symmetric bilinear form $\k$ on $S$, satisfying the vacuum Einstein constraint equations, (discussed in detail later). 
In this work, we choose the space of boundary data 
$$\cB = \cB_{Dir} = Met(\cC)$$
to be the space of Dirichlet boundary data, consisting of globally hyperbolic Lorentz metrics $\g_{\cC}$ on $\cC$. (Other choices of boundary data space 
$\cB$ will be discussed in later work). Thus, roughly speaking, we seek a faithful correspondence -- for example a homeomorphism or diffeomorphism 
\be \label{Phi0}
\begin{split}
&\Phi: \cE \to \cI_0 \times_c \cB,\\
&\Phi(g) = (g_S, K, g_{\cC}), \\
\end{split}
\ee
where $g_S$ is the metric on $S$ induced by $g$, $K$ is the second fundamental form of $S \subset (M, g)$ and $g_{\cC}$ is the metric on $\cC$ induced 
by $g$. It is well-known that local well-posedness does hold for the Cauchy or initial value problem (IVP) where there is no boundary, cf.~\cite{C}. In this 
case $\cB = \emptyset$ and $\Phi: \cE \to \cI_0$ is a homeomorphism. The subscript $c$ in \eqref{Phi0} denotes the compatibility conditions between 
data in $\cI_0$ and $\cB$, discussed in detail in \S 2.4. 

  Now on $\bE$, both the metric $g$ and the boundary $\cC$ are dynamical, and so on a slicing by Cauchy surfaces $S_t$, both the metric $g_t$ and 
the location or evolution of the boundary $\Si_t = \p S_t$ are to be determined from the initial and boundary data. In this work, we discuss only the 
local-in-time behavior. A full understanding of this is necessary to begin any more global-in-time considerations. Thus, in the following 
$$t^*: \cE \to \bR^+,$$
will denote a `time-of-existence' function. The metric $g$ is defined on the neighborhood $[0, t^*)\times S$, where $t^* = t^*(g)$ denotes the $g$-proper time 
between $S_0 = \{0\}\times S$ to $S_{t^*} = \{t^*\}\times S$. The well-posedness of the IBVP then requires the existence of such a time function 
$t^*$ for which the map $\Phi$ in \eqref{Phi0} is (at least) a homeomorphism; thus solutions $g$ of \eqref{vacuum} exist up to time $t^*$ and are 
uniquely determined by their initial and boundary data. Moreover, such solutions depend at least continuously on the initial and boundary data.

\medskip 

  Consider first the simpler Euclidean or Riemannian situation where $g$ is a Riemannian vacuum Einstein metric on a domain $M$ with 
boundary $\dm$ and with induced boundary metric $g_{\dm}$. This corresponds roughly to studying the space of time-independent or stationary 
solutions of the vacuum equations \eqref{vacuum}. Now, strictly speaking, here the Dirichlet BVP is never elliptic or well-posed; in fact the Hamiltonian constraint 
(or Gauss equation) is an obstruction to ellipticity. Briefly, to see this, the Hamiltonian constraint or Gauss equation for Riemannian Einstein metrics \eqref{vacuum} 
gives 
\be \label{Ham}
|A|^2 - H^2 + R_{g_{\dm}} = 0,
\ee
on $\dm$, where $A$ is the second fundamental form of $\dm$ in $(M, g)$, $H = \tr A$ is the mean curvature of $\dm$ and $R_{g_{\dm}}$ is the scalar curvature 
of $(\dm, g_{\dm})$. Ellipticity of the equations \eqref{vacuum}, in any suitable gauge, would imply that solutions $g$ on $M$ are as smooth as the smoothness of the 
boundary metric $g_{\dm}$, i.e. if $g_{\dm}$ is $C^{k,\a}$, then $g$ itself must be $C^{k,\a}$. (Similar statements hold with respect to Sobolev topologies). 
However, $g \in C^{k,\a}$ implies $A, H \in C^{k-1,\a}$ so that \eqref{Ham} implies $R_{\dm} \in C^{k-1,\a}$; this is clearly false for generic $C^{k,\a}$ boundary metrics. 
This shows that in function spaces with a given finite degree of differentiability, the Einstein equation \eqref{vacuum} is not solvable for generic Dirichlet 
boundary data. 

  Similar arguments apply in the Lorentzian case, where the Hamiltonian constraint implies there are no boundary stable energy estimates, cf.~\cite{A3}, 
in any gauge. The boundary problem is thus not maximally dissipative in any gauge, calling into question the well-posedness. 

  There are in fact numerous examples where both existence and uniqueness break down for Dirichlet boundary data. This occurs in both 
Riemannian and Lorentzian settings in regions where $A = 0$, cf.~\cite{AA2}. More interesting examples have recently been discussed in detail in 
\cite{AGAM}, cf.~\S 8 for further discussion.  

\medskip 

  Nevertheless, there is an important class of Euclidean (or stationary Lorentzian) vacuum solutions for which Dirichlet boundary data is effectively 
elliptic and so well-posed.  Thus if the symmetric form 
\be \label{BY}
\Pi = Hg_{\dm} - A,
\ee
is positive (or negative) definite on $\dm$, i.e.~of the same signature (up to sign) as the induced metric $g_{\dm}$ on $\dm$, then Dirichlet boundary 
data are effectively elliptic, if one allows for loss-of-dervative behavior, cf.~\cite{A1}, \cite{A2}, \cite{W}. In this case, the map (similar to $\Phi$ above) from bulk 
solutions $g$ to Dirichlet boundary data $g_{\dm}$ is a smooth tame Fredholm map in $C^{\infty}$ near $(M, g)$; this guarantees existence and uniqueness up to a finite 
dimensional indeterminacy. (Allowing for a loss of derivatives, this also holds in spaces with only finite differentiability). A key idea in the proof of such ellipticity, 
originating in Nash's work on the isometric embedding problem of Riemannian metrics into $\bR^N$ (and the associated Nash inverse function theorem) is the form 
of the variation of the boundary metric $g_{\dm}$ under infinitesimal deformations (diffeomorphisms) normal to the boundary; cf.~\S 2.5 for further discussion. 
 
   The form $\Pi$ arises naturally from a physics perspective. It is the momentum conjugate to the metric $g_{\dm}$ for the Euclidean Einstein-Hilbert 
action with Gibbons-Hawking-York boundary term
$$\int_M R_g dv_g + 2\int_{\dm}H_g dv_{g_{\dm}}.$$
It is also the Brown-York stress-energy tensor for Dirichlet boundary data on $\dm$, cf.~\cite{BY}. Note that in dimension 3, the definiteness of $\Pi$ is 
equivalent to the definiteness of the second fundamental form $A$, i.e.~the convexity (or concavity) of $\dm$ in $(M, g)$. In higher dimensions 
the condition $\Pi >0$ is a weaker version of the convexity condition $A > 0$. 

  Turning to the Lorentzian setting of interest, the Lorentzian analog of the definiteness of $\Pi$ on $\dm$ is the statement that $\Pi$ has 
the same signature, up to overall sign, as the induced Lorentz metric $g_{\cC}$ on $\cC$. Throughout much of this paper, we then 
make the following analogous convexity-type assumption: 

\smallskip

{\sf Convexity Assumption} {(*)}. Up to an overall sign, the symmetric form $\Pi = Hg_{\cC} - A$ in \eqref{BY} is non-degenerate on $\cC$, with signature $(1, n-1)$, 
so induces a Lorentz metric on $\cC$. We assume in addition that $\Pi(\p_t,\p_t) < 0$, (cf.~Remark \ref{cones}). 

\smallskip 

  To illustrate on a simple example, let $S$ be any compact smooth domain in $\bR^n$, $n \geq 3$, with smooth convex boundary $\p S = \Si$. Let $M = \bR \times S$ 
be the timelike product over $S$ in Minkowski space $(\bR^{1,n}, g_0)$. Let $\nu_{\cC}$ be the (spacelike) unit outward normal to $\cC$ in $M \subset \bR^{1,n}$. Since 
$$A = \tfrac{1}{2}\cL_{\nu_{\cC}}g_0,$$ 
it is easy to see that $A(\p_t,\p_t) = 0$ but $A$ is positive definite, $A > 0$ in the spatial directions $S$. It follows easily that $\Pi$ is a Lorentz metric on 
$\cC$ of the same signature as $g_{\cC}$. Thus, all such configurations satisfy Assumption $(*)$. (The same holds for the exterior problem on $\bR^{1,n}\setminus M$). 
Notice that this fails for $n = 2$ where $\Pi$ for the timelike product is degenerate in the spatial direction. Suitable perturbations of the product in outward directions 
will however satisfy $(*)$. 

  In general, the Assumption $(*)$ is an extrinsic condition on the metric $g$ at the boundary $\cC$ and so cannot be determined from the induced metric 
$g_{\cC}$ alone. It is an open condition, so is preserved under small perturbations, and does not depend explicitly or directly on the initial data on a Cauchy slice $S$. 
It is also gauge-invariant, in fact invariant under general diffeomorphisms $\f: M \to M$ mapping $\cC \to \cC$, but not necessarily preserving the Cauchy surface 
$S$. Note also that it holds for natural far-field or near-infinity boundaries in asymptotically Minkowski or asymptotically AdS spacetimes. 

  Since the issue is local-in-time well-posedness, the convexity Assumption $(*)$ is essentially an assumption at the corner $\Si$. As will be seen in 
\S 2.3, the restriction $A|_{\Si}$ of $A$ to $\Si$ is in fact determined by the initial and boundary data $(g_S, K, g_{\cC})$ and their compatibility at $\Si$. 
It is only the time component $A(T_{\cC}, \cdot)$ where $T_{\cC}$ is the unit timelike normal to $\Si \subset (\cC, g_{\cC})$, which is undetermined by 
this data. 

\medskip 

  Let $\cE^*$ denote the space of smooth ($C^{\infty}$) vacuum Einstein metrics $g$ on $M$ satisfying the Assumption (*) at the boundary $\cC$. 
Clearly $\cE^*$ is an open subset of $\cE$. Similarly, as above $\cI_0$ and $Met(\cC)$ denote the space of $C^{\infty}$ vacuum initial data and boundary data. 

   The following result implies the local well-posedness of the IBVP for the Einstein equations with Dirichlet boundary data satisfying 
Assumption (*). It is the first such well-posedness result with respect to any geometric boundary data for the Einstein equations. 
  
\begin{theorem} \label{mainthm}
There exists a smooth time function $t^*$ for which the space $\cE^*$ is a smooth Frechet manifold and the map 
\be \label{mainPhi}
\begin{split}
&\Phi: \cE^* \to \cI_0 \times_c Met(\cC),\\
&\Phi(g) = (g_S, K, g_{\cC}),\\
\end{split}
\ee
is a smooth open embedding into the Frechet manifold $\cI_0 \times_c Met(\cC)$. Thus ${\rm Im} \, \Phi$ is an open set in 
$\cI_0 \times_c Met(\cC)$ and $\Phi$ is a smooth diffeomorphism onto its image. 

\end{theorem}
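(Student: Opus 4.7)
The plan is to construct a smooth local inverse of $\Phi$ at each $g_0 \in \cE^*$ via a Nash-Moser tame implicit function theorem, together with a slice construction for the diffeomorphism action that simultaneously equips $\cE^*$ with its Frechet manifold structure.

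First I would pass to a slice for $\Diff_0(M)$ near $g_0$ by fixing a wave-type gauge (for instance a harmonic gauge with respect to a fixed background close to $g_0$) that is tangential to both $S$ and $\cC$. On this slice the vacuum equations \eqref{vacuum} become a quasilinear symmetric hyperbolic system in the interior, and $\Phi$ descends to a smooth tame map between Frechet manifolds. Theorem \ref{mainthm} then reduces to the statement that this restricted map is a local tame diffeomorphism whose image is open in the subset of $\cI_0 \times_c Met(\cC)$ cut out by the compatibility conditions.

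To apply Nash-Moser I need to invert the linearization $D\Phi$ with tame estimates. Writing $g = g_0 + h$, the linearized Einstein and gauge equations form a hyperbolic system for $h$ in the interior; the linearized Dirichlet datum is the induced variation $h^\tT$ of $g_\cC$. As emphasized in the introduction, this boundary condition is not maximally dissipative: the Hamiltonian constraint \eqref{Ham} obstructs any naive Dirichlet-based boundary energy estimate. The key observation, going back to Nash's isometric embedding argument and exploited in the Riemannian results of \cite{A1,A2,W}, is that for an infinitesimal diffeomorphism $X = f\nu_\cC + X^\tT$ the variation of the induced boundary metric is
\bes
\d_X g_\cC = 2fA + \cL_{X^\tT}\, g_\cC,
\ees
and that, after coupling with the Codazzi/constraint relations at $\cC$ (which bring the mean curvature $H$ into play), the effective boundary symbol for the Dirichlet problem is $\Pi = Hg_\cC - A$ rather than $A$ itself. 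Assumption $(*)$ asserts precisely that $\Pi$ is a non-degenerate Lorentz metric of the correct signature, so this boundary operator is invertible---at the cost of a controlled loss of one derivative along $\cC$---and the linearized IBVP can be recast as a hyperbolic problem with maximally dissipative boundary conditions on modified variables built from $\Pi^{-1}$ applied to $h^\tT$.

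The main obstacle is turning this heuristic into a tame Sobolev estimate uniform on a Frechet neighborhood of $g_0$: one must couple the interior hyperbolic energy estimate to a boundary analysis that uses the invertibility of $\Pi$ to absorb the derivative loss, and must simultaneously propagate the gauge condition and the constraint equations from $S$ into $M$ so that the reduced system really produces a vacuum metric realizing the prescribed Dirichlet data. Once this tame linear solvability is established, the Nash-Moser inverse function theorem furnishes a smooth local inverse of $\Phi$; injectivity of $\Phi$ modulo $\Diff_0(M)$ follows from uniqueness in the linearized estimate combined with standard gauge and constraint propagation, while openness of $\Phi(\cE^*)$ is immediate from surjectivity of $D\Phi$ at each $g \in \cE^*$.
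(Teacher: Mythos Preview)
Your overall architecture---harmonic gauge slice, Nash--Moser with tame estimates, loss of derivative absorbed by the iteration---matches the paper. But the mechanism you propose for exploiting Assumption~$(*)$ is not the one that works, and the one you describe would fail.

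You assert that the linearized Dirichlet IBVP ``can be recast as a hyperbolic problem with maximally dissipative boundary conditions on modified variables built from $\Pi^{-1}$ applied to $h^{\tT}$.'' The paper states explicitly that the boundary problem is \emph{not} maximally dissipative in any gauge, and $\Pi$ is not used as an invertible boundary symbol acting on $h^{\tT}$. Instead, the decomposition is $h^{\tT} = h_A + fA$ with $\langle h_A, A\rangle = 0$, together with an auxiliary scalar $\eta_h = h_{0\nu} + h_{\nu\nu}$. The role of $\Pi$ is then as the principal part of a wave operator \emph{intrinsic to $\cC$}: linearizing the Hamiltonian constraint along $\cC$ yields
\[
\widetilde\Box_{\cC} f := \langle D^2 f,\, Hg_{\cC} - A\rangle = (\text{target data}) + (\text{lower order in } h),
\]
and Assumption~$(*)$ is precisely what makes $\widetilde\Box_{\cC}$ a genuine hyperbolic operator on $\cC$, so that $f$ can be solved for along the boundary as a Dirichlet datum for the bulk equation.

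The second, more serious, point your sketch misses is that even after this, the components $h(\nu)^{\Sigma} = h_{\nu a}$ still receive only \emph{Neumann}-type boundary data from the gauge condition, and Neumann data do not yield a boundary-stable energy estimate (half-derivative loss, cf.\ Remark~\ref{Neumann}). The paper's energy estimate for $h(\nu)^{\Sigma}$ (Proposition~\ref{mainprop}, Lemma~\ref{mainlemma}) does not come from any dissipative boundary condition; it comes from integrating by parts the bulk energy identity, substituting the gauge equation for $\nabla_\nu h(\nu)^{\Sigma}$, and then feeding the resulting $\Pi(\nabla f)$ term back into the $\widetilde\Box_{\cC} f$ equation to produce a \emph{negative} $\lambda^{-1}\|f\|_{H^1(\cC)}^2$ term on the right-hand side. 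This sign is what closes the estimate, and it is a coupled argument between the bulk energy and the boundary wave equation for $f$, not a pointwise dissipativity condition. Your proposal gives no indication of this structure, and the ``$\Pi^{-1}$ on $h^{\tT}$ gives maximally dissipative conditions'' heuristic does not lead to it.
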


  Theorem \ref{mainthm} implies that, given any $g_0 \in \cE^*$ with initial and boundary data $((g_0)_S, K_0, (g_0)_{\cC})$, for any corner 
compatible initial and boundary data $(g_S, K, g_{\cC})$ near $((g_0)_S, K_0, (g_0)_{\cC})$, there exists $g \in \cE^*$, unique up to 
isometry in $\Diff_0(M)$ for a definite positive proper time $t^* > 0$ depending on the initial and boundary data with $\Phi(g) = (g_S, K, g_{\cC})$. 
In addition, such solutions $g$ depend smoothly on the initial and boundary data (smooth Cauchy-Dirichlet stablilty). 

  Although stated in the smooth ($C^{\infty})$ context, Theorem \ref{mainthm} also holds with respect to Sobolev-type spaces $H^s$ with a finite 
number of derivatives, cf.~Remark \ref{findiff}. 
 
\medskip
   
   There has been considerable prior work on the IBVP for the vacuum Einstein equations, beginning with the work \cite{FN} of Friedrich-Nagy; see 
for example \cite{KRSW1}, \cite{KRSW2}, \cite{FS1}, \cite{FS2} and \cite{ST} for a general survey. Recent work \cite{LSW}, \cite{LRSW} on the IBVP with conformal 
boundary data will be discussed in later work. To avoid possible confusion, we recall that the work in \cite{FS1} shows that the initial value problem (IVP) is locally 
well-posed for the boundary condition $A = 0$. However, since the boundary data is fixed, this is a Cauchy problem with fixed Neumann-type boundary data $A = 0$ 
on $\cC$. This result does not address the well-posedness of the IBVP with boundary data $A$. In fact such well-posedness, where the boundary data $A$ (as well 
as the initial data) is allowed to vary, is false in regions where $A = 0$, as shown in \cite{AA2}. 

\medskip 

  We note that Theorem \ref{mainthm} holds in any dimension and for vacuum solutions with any cosmological constant $\Lambda \in \bR$. The 
statements and proofs of the results to follow require only minor and essentially obvious modifications when $\Lambda \neq 0$. We will not consider 
here the coupling to other matter field equations. We also note that Theorem \ref{mainthm} holds equally for the exterior problem, where $\Pi$ changes sign, 
$\Pi \to -\Pi$, cf.~Remark \ref{flip}. In fact, although the Cauchy surface $S$ is assumed to be connected, the boundary $\p S = \Si$ may have a finite number 
of distinct components. 

\medskip 

  In general it is not easy to understand the image of $\Phi$. There is no simple intrinsic characterization of the Assumption (*), i.e.~whether $(*)$ holds or 
not cannot usually be determined in terms of Dirichlet boundary data alone. 
  
  Consider for instance the simpler situation in $2+1$ dimensions: all Lorentz vacuum Einstein metrics in $2+1$ dimensions are 
flat (or constant curvature if $\Lambda \neq 0$). Solving the Dirichlet BVP for such metrics is thus the same as finding isometric embeddings 
of a timelike cylinder 
$$(I \times S^1, \g) \subset \bR^{1,2},$$
(bounding a compact solid cylinder in $\bR^{1,2}$), with prescribed Lorentz metric $\g$ into Minkowski $\bR^{1,2}$. This is the Lorentzian analog of 
the well-known isometric embedding (or immersion) problem of surfaces in $\bR^3$. The classical solution of the Weyl embedding problem by Nirenberg 
and Pogorelov gives the existence of a unique solution in the Euclidean setting when the Gauss curvature satisfies $K_{\g} > 0$. It is easy to see that this 
is exactly the condition that $\Pi$ is a positive definite form, i.e.~the Riemannian version of Assumption $(*)$. Thus Theorem \ref{mainthm} is the 
Lorentzian analog of the Weyl embedding theorem in $2+1$ dimensions. 

   Of course, the condition $K_{\g} > 0$ is an intrinsic condition on the boundary metric $\g$, by Gauss' Theorema Egregium. In the Lorentzian $2+1$ 
setting, Assumption (*) implies the intrinsic condition, 
\be \label{K0}  
K_{\g} > 0. 
\ee  
However, \eqref{K0} is not sufficient to imply Assumption $(*)$. This is due to the fact that symmetric forms $\Pi$ as in \eqref{BY}, although diagonalizable with 
respect to Riemannian (positive definite) metrics, are not necessarily diagonalizable with respect to metrics of Lorentz signature. Thus there are timelike surfaces in 
$\bR^{1,2}$ with positive Gauss curvature for which Assumption $(*)$ does not hold, cf.~\S 8 for further discussion. On the other hand, if $\Pi$, (i.e.~$A$), is 
diagonalizable with respect to $\g$, then \eqref{K0} does imply Assumption $(*)$. 
 
  In the Euclidean setting, very little is known about a general theory for isometric embedding or immersion of closed surfaces in $\bR^3$ 
beyond the convex case $K_{\g} > 0$ (or $K_{\g} \geq 0$), cf.~\cite{HH}, \cite{Sp}. Once the Gauss curvature is allowed to change sign, many complicated 
behaviors may occur. Even the local isometric embedding for surfaces in $\bR^3$ remains an open problem in general. By the remarks above, all 
of this holds for Dirichlet boundary data for Riemannian vacuum Einstein metrics in $3$ dimensions. It seems likely that similar vast complications hold for 
Lorentzian vacuum Einstein metrics in $2+1$ dimensions, cf.~also \S 8 for further discussion. It would of course be interesting to explore this further. 
  
\medskip 

  As indicated above, the proof of the well-posedness of the IBVP for Dirichlet boundary data does not proceed by using a suitable gauge reduction to 
a hyperbolic IBVP for which one can apply a standard, existing theory of well-posedness; in fact we believe that such an approach is not feasible. Instead, 
due to essential loss-of-derivative issues associated with Dirichlet boundary data, we use the Nash-Moser implicit function theorem in Frechet spaces 
to prove that well-posedness at the linear level implies well-posedness at the nonlinear level. This requires of course a considerable detailed analysis of the 
linearized problem. 

\medskip 

  A brief summary of the contents of the paper is as follows. In \S 2, we introduce preliminary material needed for the work to follow. Due to the use of 
Nash-Moser, this section is quite extensive, in particular with regard to the exact choices of spaces involved as well as compatibility conditions at the 
corner $\Si$. In \S 3, we derive the basic equations for the boundary data not part of the Dirichlet boundary data. Using these equations, we 
then derive the local apriori energy estimates for the (shifted) linearized problem which are crucial in order to be able to apply the nonlinear Nash-Moser theory. 
This is first used to construct local linearized solutions to the Dirichlet IBVP in \S 5. The results of \S 4-\S 5 are then globalized (in space) to the compact 
setting $(M, g)$ in \S 6. The Nash-Moser theory is then employed to prove the main result, Theorem \ref{mainthm} in \S 7. In \S 8, we conclude with 
several remarks, including a discussion of the $2+1$ dimensional case and present examples where well-posedness of the Dirichlet IBVP breaks down. 
Finally in the Appendix we prove several (less important) technical results not given in detail in the main text. 

\medskip 
 
{\bf Acknowledgments:} We thank D. Anninos, D. Galante, C. Maneerat, D. Marolf and E. Silverstein for interesting discussions related to this work.

\section{Initial Material}

  In this section, we describe the overall approach and detail the rather extensive preliminary material needed for the proof of Theorem \ref{mainthm}. 
  
  Let $S$ be a compact, connected and oriented $n$-manifold with boundary $\p S = \Si$; here $\Si$ may or may not be connected. Let $M = I\times S$, 
where $I = [0,1]$ is parametrized by a  time function $t: M \to I$. Let $S_t = \{t\}\times S$; this will be identified with $S_0$ upon introduction of local coordinates. 
The choice of time interval $[0,1]$ is for convenience; the value $1$ could be replaced by any other fixed constant. We consider Lorentz metrics $g$ on $M$ 
which are globally hyperbolic in the sense of manifolds with boundary, where the boundary $\cC = I\times \Si$ is timelike with respect to the metric $g$; the 
induced metric $g_{\cC}$ is thus a globally hyperbolic metric on $\cC$ with closed Cauchy surface $\Si$. 
 
   Let $\nu_S$ denote the future timelike unit normal vector field to $S \subset (M, g)$ and similarly let $\nu_{\cC}$ denote the (spacelike) 
 outward unit normal to $\cC \subset (M, g)$. Then 
 $$K = \tfrac{1}{2}\cL_{\nu_S}g |_S \ {\rm and} \  A = \tfrac{1}{2}\cL_{\nu_{\cC}}g |_{\cC}.$$

   In local coordinates $x^{\a} = (t, x^a)$ near the corner $\Si$, $x^0 = t \geq 0$ is a defining function for $S = \{t = 0\}$, while $x^1 \leq 0$ is chosen 
to be a defining function for $\cC = \{x^1 = 0\}$. The coordinates $x^{\a}, \a = 2, \cdots, n$ are local coordinates of the corner $\Si$ itself. 
As usual, Greek letters $\a, \b$ denote spacetime indices $0, \dots, n$, Unless otherwise indicated, Roman letters denote spacelike indices $1, \dots, n$ 
and capital Roman letters denote corner indices $2, \dots, n$. 

\medskip 
  
  As noted in the Introduction, in the (elliptic) case of Riemannian metrics, it is well-known that there is a loss-of-derivatives problem for vacuum 
Einstein metrics with Dirichlet boundary data. This is even more severe in the case of Lorentz metrics and implies that one cannot expect to 
approach the well-posedness of the IBVP by standard methods. In this section, we develop the necessary preliminary results needed to apply 
the Nash-Moser implicit function theorem. 

\begin{remark} \label{cones}
{\rm Recall that Assumption $(*)$ requires that, up to overall sign, the symmetric form $\Pi = Hg_{\cC} - A$ induce a Lorentz metric on $\cC$ of the same type as 
$g_{\cC}$. Let $C_{\Pi}$ denote the future timelike cone of $\Pi$ and $C_{g_{\cC}}$ denote the future timelike cone of $g_{\cC}$ at any point 
$p \in \cC$. Then Assumption $(*)$ requires 
$$C_{\Pi} \cap C_{g_{\cC}} \neq 0.$$
We will always assume that 
\be \label{pt}
\p_t \in C_{\Pi} \cap C_{g_{\cC}},
\ee
with respect to some local coordinates near $\cC$; cf.~also Remark \ref{flip}. 

}
\end{remark}

\subsection{The basic mappings, spaces and norms.}

  To begin, we must choose a suitable function space. There are various possibilities, but we work with the spaces naturally associated to 
energy estimates for wave-type equations. Thus let $D$ denote a domain given as either $M$, $S$, $\cC$, $\Si$ or the corresponding $t$-level 
sets $S_t$, $\Si_t$, or the corresponding $t$-sublevel sets, $M_t$, $\cC_t$; ($M_t = \{p \in M: t(p) \leq t\}$). Define the Sobolev $H^s$ norm on 
functions on $D$ by 
 $$||v||_{H^s(D)}^2 = \int_{D} |\p_D^s v|^2dv_D,$$
where $\p_D^s$ consists of all partial coordinate derivatives tangent to $D$ of order $\leq s$. In place of coordinate derivatives, one may use the 
components of covariant derivatives of $v$ up to order $s$, given a (background) Riemannian metric $g_D$ on $D$. The volume form $dv_D$ is also 
induced from the metric $g_{D}$.  

  Define the stronger $\bar H^s$ norm by including all space-time derivatives up to order $s$, so that 
\be \label{barnorm}  
||v||_{\bar H^s(D)}^2 = \int_{D} |\p_M^s v|^2dv_D,
\ee
where $\p_M$ denotes partial derivatives along all coordinates of $M$ at $D$. 
 
  Finally, for the Cauchy slices $S_t$, define the boundary stable $H^s$ norm on $S_t$ by 
\be \label{bsnorm}
||v||_{\bar \cH^s(S_t)}^2 = ||v||_{\bar H^s(S_t)}^2 + ||v||_{\bar H^s(\cC_t)}^2.
\ee
  
  We will use the function space  
\be \label{N}
\cN^s(M) = C^0(I, \bar H^{s}(S)),
\ee
for functions on $M = M_1$. With the usual $C^0$ norm in $I$, the space $\cN^s$, with associated norm $|| \cdot ||_{\cN^s}$ is a 
separable Banach space. Norms such as \eqref{N} are commonly used for the space of solutions of wave-type equations, 
particularly regarding the Cauchy problem. Note however the use of the weaker $\bar H^s$ norm \eqref{barnorm} in \eqref{N} in 
place of the stronger boundary stable norm $\w \cH^s$ in \eqref{bsnorm}. 

 It is easy to see that 
$$H^{s+1}(M) \subset \cN^s(M) \subset H^s(M).$$

\begin{remark} 
{\rm We will frequently use the well-known Sobolev trace theorem, which states that the restriction $f \to f|_{\cC}$ has the 
associated bound  
\be \label{Sob1}
||f|_\cC||_{H^{s-1/2}(\cC)} \leq C ||f||_{H^s(M)}.
\ee
Similarly, 
\be \label{Sob2}
||f|_{\Si_t}||_{H^{s-1/2}(\Si_t)} \leq C ||f||_{H^s(S_t)} \ {\rm and} \ ||f|_{\Si_t}||_{H^{s-1/2}(\Si_t)} \leq C ||f||_{H^s(\cC_t)} .
\ee
Here $C$ is a constant depending only on the background metric $g_M$. In addition, we always assume $s > \frac{n+1}{2} + 2$, so that by the Sobolev 
embedding theorem, metrics $g \in Met(M)$ are at least $C^{2,\a}$. We note also the following Cauchy-Schwarz type inequality to be used 
in \S 4:
\be \label{Sob3}
\int_{\Si}f_1 f_2 \leq ||f_1||_{H^{1/2}(\Si)} ||f_2||_{H^{-1/2}(\Si)}.
\ee
We refer to \cite{Ad} for these results.   
}
\end{remark}

 Let $Met^s(M)$ be the space of globally hyperbolic Lorentz metrics $g$ on $M$, with coefficients $g_{\a\b} \in \cN^s$. The initial data 
 of $g$ is given by the pair $(g_S, K)$ consisting the metric $g_S$ induced on the initial Cauchy surface $S = S_0$ and the 
 second fundamental form or extrinsic curvature $K$ of $(S, g_S) \subset (M, g)$. This pair takes its values in the space 
 $\cI^s = Met^s(S)\times (S^2(S))^{s-1}$ of all initial data $(\g, \k)$, with the $H^s(S) \times H^{s-1}(S)$ topology.  
 
   For this work, the space of boundary data is 
 $$\cB^{s-1/2} = Met^{s-1/2}(\cC),$$
 of globally hyperbolic Lorentz metrics on the boundary $\cC$. Again, the bulk metric $g$ induces a boundary metric $g_{\cC} \in \cB^{s-1/2}$.

  We consider the map 
\be \label{Phi}
\Phi: Met(M) \to [S^2(M) \times \cI(S)\times \cB]_c : = \cT,
\ee
$$\Phi(g) = (\Ric_g, g_S, K, g_{\cC}),$$
where $(g_S, K)$ and $g_{\cC}$ are the initial and Dirichlet boundary data induced by $g$. The subscript $c$ denotes the compatibility 
conditions between the initial and boundary data at the corner $\Si$; these are discussed in detail in \S 2.3. For simplicity, we have dropped 
here the notation for the topologies, indexed by $s$. 

This is, roughly speaking, the equation we wish to solve and prove well-posedness for the IBVP; compare with \eqref{Phi0}.  

\medskip 

  Now, as is well-known, the mapping $\Phi$ in \eqref{Phi} is not surjective, i.e.~the equation $\Phi(g) = \tau$ is not solvable for general 
$\tau \in \cT$. This is due to the constraint equations, coupling the bulk term $\Ric(\nu_S, \cdot)$ in the (timelike) direction $\nu_S$ normal 
to $S$ with the initial data $(g_S, K)$. 

  To describe this in more detail, first extend $\Phi$ by adding on data for the normal vector $\nu_S$ of $S \subset M$. Let $\hat \cT = [\cT \times \bV_S']_c$ 
where $\bV_S'$ is the space of ($H^s(S)$) vector fields $\nu$ along $S$ nowhere tangent to $S$. We then consider the extended map 
\be \label{hatPhi}
\hat \Phi: Met(M)  \to \hat \cT,
\ee
$$\Phi(g) = (\Ric_g, g_S, K, \nu_S, g_{\cC}),$$
where $\nu_S$ is the future unit timelike normal to $S \subset (M, g)$. 
 
  The Hamiltonian and momentum constraint equations, (or equivalently the Gauss and Gauss-Codazzi equations) along $S$ are given by: 
\be \label{Gauss}
|K|^2 - H^2 - R_{g_S} - R_g + 2\Ric_g(\nu_S,\nu_S) = 0,
\ee
\be \label{GC} 
{\rm div}_{g_S} (K - H g_S) - \Ric_g(\nu_S, \cdot) = 0. 
\ee 
Here $H = \tr_{g_S}K$ is the mean curvature of $S$ and $R_{g_S}$ is the scalar curvature of $g_S$. These equations are identities on the domain 
$Met(M)$ but induce non-trivial equations on the target $\hat \cT$. To see this, note that 
$$- R_g + 2\Ric_g(\nu_S,\nu_S) = -\tr_{g_S}\Ric_g + 3\Ric_g(\nu_S, \nu_S),$$
and the latter is well-defined on $\hat \cT$. The rest of the terms in \eqref{Gauss}-\eqref{GC} are already defined on $\hat \cT$. Thus, let 
$\hat \cT_0 \subset \hat \cT$ be the subset satisfying \eqref{Gauss}-\eqref{GC}, so that 
\be \label{hatPhi}
\hat \Phi: Met^s(M)  \to \hat \cT_0.
\ee

  Note that there are also such constraint equations along the boundary $\cC$. However, the second fundamental form $A$ of $\cC \subset (M, g)$ does 
not appear as boundary data, so there are no meaningful constraint equations to impose, cf.~\S 3 however.

\subsection{Gauge fixing}

A clearer understanding of the image of $\Phi$ or $\hat \Phi$ above requires an understanding the action of the gauge group on the domain and target 
spaces. Thus, consider the gauge group $\Diff_0(M)$ of diffeomorphisms $\f: M \to M$ which restrict to the identity on $S\cup \cC$. The initial and boundary 
data are invariant under the action of $\Diff_0(M)$, while in the bulk, the action $(\f, \Ric) \to \f^*(\Ric)$ is not invariant in general, unless one is on-shell, 
i.e. on the space $\bE$ of vacuum solutions.

The choice of gauge is described by a choice of vector field $V$ on $M$, leading to the reduced Einstein equations 
\be \label{Q}
Q := \Ric_g + \d^*V = 0.
\ee
Geometrically, perhaps the most natural choice of gauge is the choice of (generalized) harmonic or wave coordinates 
\be \label{harm}
V_g = \Box_g x^{\a}\p_{x^{\a}},
\ee
where $\Box_g = |g|^{-1/2}\p_{\a}(g^{\a\b}|g|^{1/2}\p_{\b} \cdot)$. This gauge will be used throughout the paper. It is well-known that the gauge \eqref{harm} can be 
naturally globalized to all of $M$. For instance, suppose $M \subset \bR^4$ topologically and consider the Euclidean target metric $(\bR^4, g_{Eucl})$. More 
generally one may assume $M \subset \w M$ and let $g_R$ be a complete Riemannian metric on $\w M$. Then $V_g$ is defined to be the tension field of 
the identity wave map $(M, g) \to (\w M, g_R)$, cf.~\cite{GG}. This gives \eqref{harm} when $g_R = g_{Eucl}$. 

  The gauged version of the map $\Phi$ is then given by 
\be \label{PhiH}
 \Phi^H: Met(M) \to \cT^H,
 \ee
$$\Phi^H(g) = \big((\Ric_g + \d_g^* V_g), (g_S, K, \nu_S, V|_S), (g_{\cC}, V|_{\cC}) \big),$$ 
where $V|_S$ and $V|_{\cC}$ are the restrictions of $V_g$ to $S$ and $\cC$ respectively and $\cT^H = [\hat \cT_0 \times \bV_S \times \bV_{\cC}]_c$; we 
refer again to \S 2.3 for a detailed description of the target space $\cT^H$, cf.~\eqref{TH}.   

  The overall method of proof of well-posedness is to prove that $\Phi^H$ in \eqref{PhiH} is a tame Fredholm map with a tame approximate inverse at the 
linearized level and to then apply the Nash implicit function theorem in Frechet spaces. Thus, most of the analysis to follow deals with the study of the linearization 
\be \label{DF2}
D\Phi_g^H: T_g Met(M) \to T_{\tau}(\cT^H),
\ee
where $\tau = \Phi^H(g)$. Actually the understanding of the map $D\Phi_g^H$ will be developed through the study of the shifted map $D\w \Phi^H$ introduced in 
\S 2.5 below. 

\medskip 

  The bulk term in the linearization $D \Phi_g^H$ is given by 
 \be \label{L}
L(h) = \Ric'_h + \d^*V'_h + (\d^*)'_hV.
\ee
The linearization of the gauge field $V = V_g$ is given by 
\be \label{V'}
V'_h = \b_g h - \<D^2 x^{\a}, h\>\p_{x^{\a}},
\ee
where $\b_g = \d_g + \frac{1}{2}d\tr_g h$ is the Bianchi operator. 

The equation $L(h) = F$ then has the form 
\be \label{L2}
L(h) = \tfrac{1}{2}D^*D h + S(h) = F,
\ee
where $L$ is the well-known Lichnerowicz operator; $S$ is a zero order linear curvature operator on $h$ and $D^*D$ is the tensorial wave operator on symmetric 
bilinear forms. Of course all the coefficients in \eqref{L2} depend on the background metric $g$ at which the linearization is formed. 
 
  The equation \eqref{L2} may be written in local coordinates in the form 
$$L(h) = -\tfrac{1}{2}(\Box_g h_{\a\b})dx^{\a}dx^{\b} + P_{\a\b}(\p h)dx^{\a}dx^{\b} = F_{\a\b}dx^{\a}dx^{\b},$$
where $P(\p h)$ involves $h$ and its first derivatives. In the localized context discussed in \S 2.4, the coefficients of $P$ are 
all of order $\e$, $P(\p h) = \e(\p h)$. Since $L$ is uncoupled at leading order, it may be written as a coupled system of scalar 
wave equations 
 \be \label{L4}
(L(h))_{\a\b} = -\tfrac{1}{2} \Box_g (h_{\a\b}) + P_{\a\b}(\p h) = F_{\a\b}. 
\ee

  In later sections, we will repeatedly need to solve linear wave equations of the form \eqref{L4}, i.e. 
\be \label{L0}
-\tfrac{1}{2}\Box_g v + p(\p v)  = \f,
\ee
for given $\f$. Here $v$ may be either scalar valued or vector-valued. Although solvability of the linear equation 
or system \eqref{L0} holds more generally, we will only need to actually solve such systems in $C^{\infty}$, so with $g$, $p$ and 
$\f$ in $C^{\infty}$. 

 The next Lemma is well-known and will be used repeatedly in \S 5. 

\begin{lemma} \label{DirSomm}
For either Dirichlet or Sommerfeld boundary data, the $C^{\infty}$ system \eqref{L0} is solvable for any smooth $\f$, with standard energy estimates. 
\end{lemma}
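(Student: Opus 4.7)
The plan is to invoke the standard theory of linear second-order hyperbolic IBVPs on Lorentz manifolds with timelike boundary, specialized to the present setting. The principal part of $L_0(v) := -\tfrac{1}{2}\Box_g v + p(\p v)$ is strictly hyperbolic with smooth coefficients, $\cC$ is timelike with respect to $g$, and $S$ is a smooth spacelike Cauchy surface; these are precisely the structural hypotheses under which both Dirichlet and Sommerfeld-type boundary conditions give a well-posed IBVP in $C^\infty$.

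First I would derive the basic energy identity by pairing $L_0(v) = \f$ with a timelike multiplier (e.g.~$\p_t v$, or $X(v)$ for a globally timelike vector field $X$) and integrating over the slab $M_t$. The divergence theorem produces a spacelike energy on $S_t$, a spacelike energy on $S_0$ (the initial contribution), and a flux on the timelike piece $\cC_t$. For Dirichlet data one first subtracts a smooth extension of the boundary value to reduce to the homogeneous case; then $\p_t v|_\cC = 0$ and the flux on $\cC_t$ vanishes. For Sommerfeld data, the condition is maximally dissipative for $\Box_g$ at a timelike boundary, so the flux term on $\cC_t$ has a favorable sign. The subprincipal operator $p(\p v)$ is of lower order and is absorbed by a Gronwall argument, yielding the basic estimate
$$||v||_{\bar\cH^1(S_t)} \leq C\big(||v||_{\bar\cH^1(S_0)} + ||\f||_{L^2(M_t)}\big),$$
supplemented by the appropriate boundary-data term in the Dirichlet case.

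Higher-order estimates follow by commuting $L_0$ with vector fields tangent to $\cC$ (and with $\p_t$), which preserve both boundary conditions and generate commutator terms of the same order that are controlled inductively. Transversal derivatives at $\cC$ are then recovered algebraically from $L_0(v) = \f$, since the coefficient of $\p_{\nu_\cC}^2 v$ in $\Box_g v$ is nonvanishing on a timelike boundary; the analogous argument at $S$ recovers $\p_t$-derivatives from initial data. Iterating in the order $s$ produces $H^s$ and $\bar\cH^s$ bounds for every $s$. Existence then follows by a standard Galerkin or adjoint-duality scheme: construct approximate solutions with controlled norms, extract a weak limit using the uniform \emph{a priori} bounds, and upgrade the regularity of this weak solution from the higher-order estimates.

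The only real subtlety, and the step most worth flagging, is the corner compatibility at $\Si = S \cap \cC$. To obtain a genuinely $C^\infty$ (rather than finite-regularity) solution, the prescribed data must satisfy an infinite hierarchy of compatibility conditions at $\Si$, obtained by repeatedly differentiating $L_0(v) = \f$ in time and evaluating at the corner. In the present paper these conditions are built into the spaces of admissible data (see \S 2.4 and \S 2.3), so the assumption that $g$, $p$, $\f$, and the prescribed boundary and initial data lie in $C^\infty$ and are compatible at $\Si$ is exactly what is needed for the standard theory to deliver a $C^\infty$ solution together with the claimed energy estimates.
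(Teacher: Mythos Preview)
Your proposal is correct and outlines the standard argument in more detail than the paper itself does. The paper's proof is essentially a citation: it refers to \cite{BS} and \cite{Sa} for existence and uniqueness in Sobolev spaces under the stated compatibility conditions, and then simply records the precise form of the energy estimates \eqref{DirE} and \eqref{SomE} needed later. Your sketch of the multiplier method, the dissipativity of the Sommerfeld condition, the commutator argument for higher $s$, and the corner-compatibility caveat all accurately describe what underlies those references. The main thing the paper's version contributes that yours does not is the exact bookkeeping of the norms on the right-hand side (separating $||v||_{H^s(S_0)}$, $||\p_t v||_{H^{s-1}(S_0)}$, the boundary term $||v||_{H^s(\cC_t)}$ or $||b(v)||_{H^{s-1}(\cC_t)}$, and $||\f||_{H^{s-1}(M_t)}$), which is used repeatedly in \S 3--\S 4; you may want to state the estimate in that precise form rather than the slightly schematic $||v||_{\bar\cH^1(S_0)}$.
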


\begin{proof} 

  The existence and uniqueness of solutions to \eqref{L0} in Sobolev spaces, with given initial data and with either Dirichlet or 
Sommerfeld boundary data is standard, cf.~\cite{BS}, \cite{Sa}. We assume implicitly here that the initial data and boundary data for $v$ satisfy the 
requisite compatibility conditions at the corner $\Si$, cf.~\S 2.3. Energy estimates for such equations are also standard; for later reference, 
we state the details here. 

  The strong or boundary stable energy estimate with Dirichlet boundary data states that any (smooth) solution of \eqref{L0} satisfies 
the bound 
\be \label{DirE}
||v||_{\bar \cH^s(S_t)}^2 \leq C[||v||_{H^s(S_0)}^2 + ||\p_t v||_{H^{s-1}(S_0)}^2 + ||v||_{H^s(\cC_t)}^2 + ||\f||_{H^{s-1}(M_t)}^2],
\ee
where $C$ is a constant depending only on $g$ and the coefficients of $p$. Note the difference in the stronger and weaker norms on the left 
and right of \eqref{DirE}. We recall that $\cC_t = \{p \in \cC: t(p) \leq t\}$ and similarly for $M_t$. 

  There is a similar estimate for Sommerfeld boundary data, where the boundary data is given by 
$$b(v) = (\p_t + \nu_{\cC})(v).$$
In this case, one has  
\be \label{SomE}
||v||_{\bar \cH^s(S_t)}^2 \leq C[||v||_{H^s(S_0)}^2 + ||\p_t v||_{H^{s-1}(S_0)}^2 + ||b(v)||_{H^{s-1}(\cC_t)}^2 + ||\f||_{H^{s-1}(M_t)}^2].
\ee
Similar results hold for $b(v) = (a \p_t + b \nu_{\cC})(v)$ provided $a > 0$, $b > 0$. For both \eqref{DirE} and \eqref{SomE}, we assume 
$t \in I = [0,1]$. Assuming $g$ is defined, these estimates do hold for large $t$, but with $C$ depending then in addition on $t$. 

\end{proof}

\begin{remark} \label{Neumann} 
{\rm One also has existence and uniqueness for smooth solutions of the equation \eqref{L0} with given Neumann boundary data $b(v) = \nu_{\cC}(v)$. 
However, in this case, there is no effective energy estimate as in \eqref{DirE} or \eqref{SomE}. Instead there is such an estimate but with a loss 
of derivative - or more precisely a loss of half-a-derivative, i.e.
\be \label{NeuE}
|v||_{\bar \cH^s(S_t)}^2 \leq C[||v||_{H^s(S_0)}^2 + ||\p_t v||_{H^{s-1}(S_0)}^2 + ||\nu_{\cC}(v)||_{H^{s-1/2}(\cC_t)}^2 + ||\f||_{H^{s-1}(M_t)}^2].
\ee
We refer to \cite[Ch.8]{MT}, \cite{Tat} for further details. In some situations, the bound on $||\nu_{\cC}(v)||_{H^{s-1/2}(\cC_t)}$ can be improved to a stronger 
$H^s$ bound. However, it cannot be improved to an effective $H^s$ bound as in \eqref{DirE} or \eqref{SomE}. 
}
\end{remark}

   We next discuss the gauge term $V$ and its linearization $V'_h$. Consider the equation  
\be \label{V0}
\Phi^H(g) = \Ric_g + \d_g^*V = Q, 
\ee
where $Q$ is given data in the target data space $\cT^H$. We will need the following results on the gauge field. 
 
\begin{lemma}\label{Gauge-lemma}
Let $\Box_g$ denote the wave operator $\Box_g = - D^*D$ acting on vector fields $V$ on $M$. Then  
\be \label{V}
-\tfrac{1}{2}[\Box_g + \Ric_g](V) = \b_g(Q).
\ee
The initial data $V|_S$, $(\p_t V)|_S$ and boundary data $V|_{\cC}$ are determined by the target data  
$$(\Ric + \d^*V, (g_S, K_S, \nu_S, V|_S), V|_{\cC})$$
in $\cT^H$ and hence $V$ on $M$ is uniquely determined by target data. 

\end{lemma}

\begin{proof}
Applying the Bianchi operator $\b_g = \d_g + \frac{1}{2}d \tr_g$ to \eqref{V0} gives 
\be \label{V2}
\b_g \Ric_g + \b_g \d^*V = \b_g(Q),
\ee
which, by the Bianchi identity $\b_g \Ric_g = 0$, gives \eqref{V} via a standard Weitzenbock formula 

The Dirichlet data for $V$ along $S$ and $\cC$ are target data in $\cT^H$. We claim that the $t$-derivative $\p_t V$ on $S$ is also 
determined by target data. This follows from the Gauss and Gauss-Codazzi identities \eqref{Gauss}-\eqref{GC}. For $\nu = \nu_S$ the 
unit timelike normal to $S$, these identities show that the form $E(\nu, \cdot) = \Ric(\nu, \cdot) - \frac{1}{2}R g(\nu, \cdot)$ is determined by 
initial data $\iota = (g_S, K_S)$ along $S$. By \eqref{V0}, it follows that $\d^*V(\nu, \cdot) - \frac{1}{2}\tr \d^*V g(\nu, \cdot)$ is determined by 
initial data. Since $V$ is determined along $S$ by the target data, it follows easily that $\nabla_{\nu}V$ and hence $\p_t V$ is determined 
along $S$ by target data. The standard existence and uniqueness of solutions to the wave equation \eqref{V} with given initial 
and Dirichlet boundary data then shows that $V$ is uniquely determined by target data. 

\end{proof}

 A similar result holds for the linearization $V'_h$ of the gauge field $V = V_g$. 
\begin{lemma}\label{Gauge-lemma2}
We have 
\be \label{V'eqn}
-\tfrac{1}{2}(\Box_g + \Ric_g)V'_h + \tfrac{1}{2}\nabla_V V'_h = \b_g F + \b'_h \Ric_g + O_{2,1}(V,h),
\ee
where $O_{2,1}(V,h)$ is $2^{\rm nd}$ order in $V$ and $1^{\rm st}$ order in $h$. Moreover, $O_{2,1}(V,h) = 0$ if $V = 0$. 

  The initial data $(V'_h)|_S$, $(\p_t V'_h)|_S$ and boundary data $V'_h|_{\cC}$ are determined by the target data  
$$\big((Ric + \d^*V)'_h, (h_S, K'_h, (\nu_S)'_h, V'_h |_S), V'_h |_{\cC} \big)$$
in $T(\cT^H)$.
\end{lemma}

\begin{proof}
Again applying the Bianchi operator to both sides of \eqref{L} gives:
\bes
\b_g \Ric'_h+\b_g \d^* V'_h+\b_g[(\d^*)'_h V]=\b_g F
\ees
which via the Weitzenbock formula as before implies 
\be \label{V'1}
-\tfrac{1}{2}[\Box V'_h+\Ric_g(V'_h)]=\b_g F + \b'_h \Ric_g -\b_g[(\d^*)'_h V].
\ee
Simple calculation gives $(\d^*)'_hV = \frac{1}{2}\nabla_V h + \d^*V\circ h$, so that 
 $$\b[(\d^*)'_hV] = \tfrac{1}{2}\b(\nabla_Vh) + O_{2,1}(V,h) = \tfrac{1}{2}\nabla_V \b h + O_{2,1}(V,h) = \tfrac{1}{2}\nabla_V V'_h + O_{2,1}(V,h),$$
where we have used \eqref{V'} in the last equality. This gives \eqref{V'eqn} 

  As in Lemma \ref{Gauge-lemma}, the Dirichlet data for $V'_h$ along $S$ and $\cC$ are given as target space data and we use the 
constraint equations \eqref{Gauss}-\eqref{GC} to determine the initial velocity $\p_t V'_h$. As before, the bulk equation yields:
\bes
\begin{split}
&(\Ric-\tfrac{1}{2}Rg)'_h(\nu_S)+[\d^*V-\tfrac{1}{2}({\rm div}V)g]'_h(\nu_S)\\
&=[F-\tfrac{1}{2}(\tr F)g](\nu_S)+\tfrac{1}{2}\<h,\Ric_g+\d^* V_g\>g(\nu_S)-
\tfrac{1}{2}\tr(\Ric_g+\d^* V_g)h(\nu_S)
\end{split}\ees
and thus
\bes\begin{split}
&[\d^*V-\tfrac{1}{2}({\rm div}V)g]'_h(\nu_S)\\
&=-[(\Ric-\tfrac{1}{2}Rg)(\nu_S)]'_h+(\Ric-\tfrac{1}{2}Rg)((\nu_S)'_h)\\
&+[F-\tfrac{1}{2}(\tr F)g](\nu_S)+\tfrac{1}{2}\<h,\Ric_g+\d^* V_g\>g(\nu_S)-
\tfrac{1}{2}\tr(\Ric_g+\d^* V_g)h(\nu_S)
\end{split}\ees
on $S$.
By the constraint equations \eqref{Gauss}-\eqref{GC}, $[(\Ric-\tfrac{1}{2}Rg)(\nu_S)]'_h$ is given by 
\bes
\big(-\tfrac{1}{2}[|K_S|^2-(\tr_{g_S}K_S)^2+R_S]'_{(h_S,K'_h)},~[{\rm div}_{g_S}K_S-d_S(\tr_{g_S}K_S)]'_{(h_S,K'_h)}\big).
\ees
Thus the target data in $T(\cT^H)$ uniquely determine the vector field 
\bes
[\d^*V'_h](\nu_S)-\tfrac{1}{2}({\rm div}V'_h)g(\nu_S),
\ees
along $S$. Since the initial data of $V'_h$ is already determined, this uniquely determines the vector field 
\be
\nabla_{\nu_S} V'_h \ \ {\rm along} \ \ S.
\ee

\end{proof}

\begin{remark} \label{gaugerem} 
{\bf (i).}
{\rm Lemma \ref{Gauge-lemma} implies the standard result that solutions of the gauge-reduced Einstein equations 
$$\Ric_g + \d^*V = 0,$$
with target data $ V = 0$ at $S\cup \cC$ satisfy $V = 0$ on $M$, and so are solutions of the Einstein equations 
$$\Ric_g = 0$$
on $M$. The same result holds locally, in the context of the localization in \S 2.4, as well as for the linearized equations \eqref{V'eqn}. 

  Conversely, given any vacuum Einstein metric $g$, there is a unique diffeomorphism $\f \in \Diff_1(M)$ such that $\w g = \f^*g$ is in 
harmonic gauge, $V_{\w g} = 0$. Here $\Diff_1(M) \subset \Diff_0(M)$ is the group of smooth diffeomorphisms equal to the identity to 
first order on $S$. 
 
{\bf (ii).} As will be seen below, cf.~the discussion preceding \eqref{Va}, the initial data $h_{\a\b}$, $\p_t h_{\a\b}$ for $h$ along $S$ are determined 
by the target data in $T(\cT^H)$. 

{\bf (iii).} For later use, note that the gauge variation $V'_h$ satisfies the boundary stable energy estimate \eqref{DirE}, since the Dirichlet boundary 
value $V'_h |_{\cC}$ is part of the target data in $T(\cT^H)$. 

}
\end{remark}

\subsection{Geometry at the corner}
Next we discuss the compatibility conditions on the initial and boundary data at the corner $\Si$, and the corresponding structure of the metric 
at $\Si$. 

  Any smooth metric $g$ on $M$ induces initial data on $S$ and boundary data on $\cC$. The smoothness of $g$ imposes compatibility 
conditions on these sets of data. Conversely, the compatibility conditions on the (abstract or target) initial and boundary data are the conditions 
that such smooth data extend to a smooth metric on $M$. 
 
    In more detail, let $\cU$ denote the uncoupled space of target data, so that 
 $$\cU =  Sym^2(M) \times \cI \times \bV_S' \times \bV_S \times \cB \times \bV_{\cC}.$$
This is a product space, where $S^2(M)$ is the space of symmetric forms $Q$ with the $\cN^{s-2}$ topology, (cf.~\eqref{V0}), $\cI$ is the space of 
initial data $(\g_S, \k)$ with the $H^s(S) \times H^{s-1}(S)$ topology, $\bV'_S$ is the space of $H^s$ vector fields along $S$ nowhere tangent to $S$, 
$\bV_S$ is the space of $H^{s-1}$ vector fields $V_S$ along $S$, $\cB$ is the space of boundary data $\g_{\cC}$ with the $H^{s-1/2}$ topology as in \S 2.1 
while $\bV_{\cC}$ is the space of $H^{s-3/2}$ vector fields $V_{\cC}$ along $\cC$. By construction, $\cU$ is a separable Hilbert space. 

  The metric $g$ induces data on $\cU$ through the map $\Phi^H$. The two equations for $g$, namely the bulk equation $\Ric_g + \d^*V_g = Q$ in \eqref{V0}
and the definition of the gauge field $V_g$ in \eqref{harm}, as well as definition of the normal vector $\nu_S$ in \eqref{nuS}, determine compatibility 
conditions for $g$ at the corner $\Si$. These need to be expressed in terms of equations for the data in $\cU$. The gauged target space   
$$\cU_c = [\cI \times \bV_S' \times \bV_S \times \cB \times \bV_{\cC}]_c \subset \cU $$
is then the subspace satisfying these compatibility conditions. The compatibility conditions will be expressed as the zero-locus of a (large) collection of 
functions on $\cU$, ordered according to the degree of differentiability. 

\medskip 

  We note that these compatibility conditions are invariant under smooth diffeomorphisms $\f$ which 
 map $\f: S \to S$, $\f: \cC \to \cC$ and $\f: \Si \to \Si$. Due to this, we may assume without loss of generality that the metric $\g_{\cC}$ on $\cC$ 
 is in Gaussian (or Fermi) geodesic coordinates, i.e.
 $$\g_{\cC} = -dt^2 + \g_{\Si_t}$$
 where $\g_{\Si_t}$ is a curve of metrics on $\Si_t$. Similarly, we may assume $\g_S$ has the form 
 $$\g_S = dx_1^2 + \g_{\Si_{x^1}},$$
 where $\g_{\Si_{x^1}}$ is a curve of metrics on the level sets $\Si_{x^1}$ of $x^1: S \to \bR$. These assumptions simplify some of the discussion to follow. 
 
\medskip  
 
  We begin with the space $\cI \times \cB$ of initial data $(\g_S, \k)$ and boundary data $\g_{\cC}$. The $C^0$ compatibility of this data requires that 
\be \label{Com0}
(\g_{S}  -  \g_{\cC})|_{\Si}  = 0 .
\ee
This is the only $C^0$ compatibility condition between initial and boundary data $\cI \times \cB$ in $\cU$. It follows that the full ambient metric $g$ is determined 
at the corner, except possibly for the angle term 
\be \label{alpha}
\a = \<\nu_S, \nu_{\cC}\>
\ee
between $S$ and $\cC$; here $\nu_S$ is the future unit normal to $S$ in 
$(M, g)$ and $\nu_{\cC}$ is the outward unit normal to $\cC$ in $(M, g)$. From the geodesic normalizations above, it is easy to see 
that $\p_t$ is the future unit normal of $\Si \subset (\cC, \g_{\cC})$ and $\p_{x^1} = N$ is the unit outward normal of $\Si \subset (S, \g_S)$. 
Also, 
$$\a = \<\nu_S, \nu_{\cC}\> = -\<\p_t, \p_{x^1}\> = -g_{01},$$
and 
\be \label{TC}
\p_t = \sqrt{1+\a^2}\nu_S -\a \p_{x^1}.
\ee

  The following Lemma shows that $\a$ is in fact determined by the initial and boundary data under Assumption $(*)$, so that 
$$\a = \a(\g_S, \k, \g_{\cC}),$$
is well-defined as a function on $\cI \times \cB$ and hence on $\cU$ via the projection $\cU \to \cI \times \cB$.  

\begin{lemma}\label{alpha}
For any $C^1$ metric $g$, one has the relation 
\be \label{vel}
H_{\Si}^{\cC} = \sqrt{1+\a^2}\tr_\Si \k - \a H_\Si^S,
\ee
where $H_{\Si}^S$ and $H_{\Si}^{\cC}$ are the mean curvatures of $\Si$ in $(S, \g_S)$ and $(\cC, \g_{\cC})$ respectively. 
Consequently, under Assumption (*), $\a$ is determined by the data $(\g_S, \k, \g_{\cC})$. 
\end{lemma}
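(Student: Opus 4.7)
The plan is in two steps: first, derive the identity \eqref{vel} by a direct extrinsic computation at the corner; second, deduce from \eqref{vel} that Assumption $(*)$ pins down $\a$ uniquely from the data $(\g_S,\k,\g_{\cC})$.

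For step one, I work in the Fermi/geodesic coordinates fixed just above, so that $\p_t$ is the future unit normal of $\Si \subset (\cC,\g_{\cC})$ and $N:=\p_{x^1}$ is the outward unit normal of $\Si \subset (S,\g_S)$. Fix a local $g$-orthonormal frame $\{e_A\}_{A=2}^n$ of $T\Si$. Because $\nu_S \perp TS \supset T\Si$ and $\nu_{\cC} \perp T\cC \supset T\Si$, the Gauss formulas for $S \subset M$ and $\cC \subset M$ contribute only normal terms when one computes $\<\nabla^M_{e_A}\,\cdot\,,\,e_A\>$, so
\bes
H_\Si^{\cC} = \sum_A \<\nabla^M_{e_A}\p_t,e_A\>, \qquad H_\Si^S = \sum_A \<\nabla^M_{e_A} N, e_A\>, \qquad \tr_\Si \k = \sum_A \<\nabla^M_{e_A}\nu_S,e_A\>.
\ees
Substituting the decomposition \eqref{TC}, $\p_t = \sqrt{1+\a^2}\,\nu_S - \a N$, into the first expression and using $\<\nu_S,e_A\> = \<N,e_A\> = 0$ along $\Si$ to kill the terms involving $e_A(\sqrt{1+\a^2})$ and $e_A(\a)$ yields $H_\Si^{\cC} = \sqrt{1+\a^2}\,\tr_\Si\k - \a\, H_\Si^S$, which is \eqref{vel}. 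The argument uses only one derivative of $g$, consistent with the $C^1$ hypothesis.

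For step two, view \eqref{vel} pointwise on $\Si$ as the scalar equation $F(\a)=0$ with $F(\a)=\sqrt{1+\a^2}\,\tr_\Si \k - \a H_\Si^S - H_\Si^{\cC}$. Isolating the radical and squaring yields a quadratic in $\a$ whose coefficients are built from $(\g_S,\k,\g_{\cC})$, so at each point at most two candidate roots exist. The remaining task is to use Assumption $(*)$ to select the geometric root. The plan is to relate the derivative $F'(\a) = \tfrac{\a}{\sqrt{1+\a^2}}\tr_\Si\k - H_\Si^S$, equivalently the sign of $\sqrt{1+\a^2}\,\tr_\Si\k - \a H_\Si^S$, to the Brown--York tensor $\Pi = H\g_{\cC} - A$ evaluated on $\p_t$. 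Since $(*)$ declares $\Pi$ a Lorentz metric with $\p_t \in C_{-\Pi}$, a definite sign of $\Pi(\p_t,\p_t)$ is prescribed, which excludes the spurious root introduced by squaring and ensures $F'(\a)\neq 0$ at the geometric solution. The implicit function theorem then gives $\a$ smoothly in $(\g_S,\k,\g_{\cC})$.

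The main obstacle is step two: Assumption $(*)$ is an extrinsic condition on $A$, while the quadratic for $\a$ is written in terms of intrinsic/initial data, so translating the sign/non-degeneracy of $\Pi$ into the correct branch of the radical takes a careful but direct calculation. Step one is essentially bookkeeping with the Gauss formula once the decomposition \eqref{TC} is in hand.
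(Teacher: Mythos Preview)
Your Step~1 is essentially the paper's argument written out in full; the paper just says ``take the trace of the covariant derivative of $\p_t$ in \eqref{TC}.''

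Your Step~2 takes a genuinely different route. The paper observes that the two roots of the quadratic obtained by squaring \eqref{vel} are interchanged by the orientation flip $\nu_S \to -\nu_S$, so fixing the time orientation selects $\a$; Assumption~$(*)$ enters only to rule out the degenerate case $\tr_\Si\k = H_\Si^S = 0$ (which forces $\tr_\Si A = 0$, hence $\Pi(\p_t,\p_t)=0$), and the paper then forward-references Remark~\ref{flip} and Theorem~\ref{mainthm}. Your idea of tying $F'(\a)$ to $\Pi(\p_t,\p_t)$ is cleaner and self-contained: from the companion decomposition $\nu_\cC = \sqrt{1+\a^2}\,N - \a\,\nu_S$ one gets $\tr_\Si A = \sqrt{1+\a^2}\,H_\Si^S - \a\,\tr_\Si\k$, and since $\Pi(\p_t,\p_t) = -\tr_\Si A$ this yields exactly $\sqrt{1+\a^2}\,F'(\a) = \Pi(\p_t,\p_t) < 0$. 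Two points to tighten: first, your ``equivalently'' clause has the coefficients swapped---the expression $\sqrt{1+\a^2}\,\tr_\Si\k - \a H_\Si^S$ is $H_\Si^\cC$ itself, not a multiple of $F'(\a)$; the correct intermediate quantity is $\a\,\tr_\Si\k - \sqrt{1+\a^2}\,H_\Si^S$. Second, to conclude uniqueness (not just local uniqueness from the IFT) you should note that $F''(\a) = \tr_\Si\k\,/(1+\a^2)^{3/2}$ has a fixed sign, so $F$ is strictly convex, strictly concave, or linear, and hence has at most one root with $F'<0$. With these fixes your argument is complete and arguably more direct than the paper's.
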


\begin{proof}
The formula \eqref{vel} follows directly by taking the trace of the covariant derivative of $\p_t$ in \eqref{TC}. 
Since all the terms $H_{\Si}^S$, $H_{\Si}^{\cC}$ and $\tr_{\Si} \k$ are determined by the initial and boundary data, it follows 
that $\a$ is uniquely determined, up to a sign choice, by the initial and boundary data, provided one of $\tr_{\Si}K \neq 0$ or $H_{\Si}^S \neq 0$. 
However, if both are zero, then $tr_{\Si}A_{\cC} = 0$. This is impossible by Assumption (*). 

Given a fixed value for the initial and boundary data, and so for $H_{\Si}^{\cC}, \tr_{\Si}\k, H_{\Si}^S$, the (generically) two values $\a_{\pm}$ of $\a$ 
satisfying \eqref{vel} are mapped to each other by the sign or orientation change $\nu_S \to -\nu_S$. As will be seen later from the proof of 
Theorem \ref{mainthm}, this orientation change, corresponding to future or past evolutions, gives rise to solutions $(M^{\pm}, g^{\pm})$ which 
are isometric under an orientation reversing diffeomorphism, cf.~Remark \ref{flip} for concrete examples. Thus fixing the choice of orientation 
fixes the angle $\a$. 

\end{proof} 

 We note that without Assumption (*), so if both terms $\tr_{\Si}K$, $H_{\Si}^S$ above vanish, perhaps even in a small open set in $\Si$, 
then $\a$ remains undetermined; this may reflect a lack of well-posedness/uniqueness when Assumption $(*)$ is dropped.

Thus the ambient metric $g_{\a\b}$ is uniquely determined along $\Si$ by $(\g_S, \k, \g_{\cC})$. The $C^1$ compatibility condition is then given by 
\be \label{LH}
\cL_{\p_0}\g_{\cC} |_{\Si}  - \sqrt{1+\a^2}\k |_{\Si}  + \a B_{\Si} = 0,
\ee
where $B_{\Si} = \cL_{\p_1}\g_S |_{\Si}$ is (twice) the second fundamental form of $\Si \subset (S, \g_S)$. Note that the $\g_{\Si}$ trace of 
\eqref{LH} is equivalent to \eqref{vel}. We write this in local coordinates as 
\be \label{Com1}
\p_0 \g_{AB} - \chi_1(\p \g_S, \k, \p g_\cC) = 0.
\ee
Here and in the following, $\chi, \zeta, \xi$ will denote general smooth functions of the jets of the indicated data in $\cU$ at $\Si$, with the subscript indicating 
the leading order of derivatives. The equation \eqref{Com1} implies that the full second fundamental form of $\Si \subset (\cC, \g_{\cC})$ is determined by its 
trace, i.e.~by $H_{\Si}^{\cC}$, given $(\g_S, \k)$. In the following, the specific form of $\chi, \zeta, \xi$ may change from line to line, but without a change in the notation. 

\begin{remark} \label{corner}
{\rm The discussion above shows that the $C^1$ jet of the metric $g$ at $\Si$ is determined by the initial and boundary data of $g$ in $\cU$ and 
their $C^0$ and $C^1$ compatibility conditions, except for the components $\p_t g_{01}, \p_1 g_{0\a}$ on $\Si$ in local coordinates. The same 
result holds for infinitesimal deformations $h$ of $g$. 

  Geometrically, the terms $\p_1 g_{0i}$, $i = 0,2, \dots, n$ correspond to the component $A(\p_0, \cdot)$ of the second fundamental 
form $A = A_{\cC}$ of $\cC$ in $(M, g)$, (since $\p_1$ is a linear combination of $\nu_{\cC}$ and $\p_t$ and the $\p_0 g_{0i}$ components 
are determined by $\g_{\cC}$). The remaining two terms $\p_0 g_{01}$ and $\p_1 g_{01}$ correspond to the time and radial variation of the 
corner angle in adapted coordinates. 
}
\end{remark}

  Next we discuss the compatibility conditions on the remaining initial and boundary data $\nu = \nu_S$ and $V_S$, $V_{\cC}$. First for $\nu$, 
we have  
\be \label{nuS}
\nu_S = -(|\nabla t|)^{-1}\nabla t = -(|\nabla t|)^{-1}g^{0\a}\p_{\a}, \  |\nabla t| = \sqrt{-g^{00}}.
\ee
At $\Si$, this gives the $C^0$ compatibility condition 
\be \label{Comnu}
\nu - (1+\a^2)^{-1/2}(\p_t - \a \p_{x^1}) = 0,
\ee
so that  
$$\nu - \chi_0(\a) = 0.$$
This couples the $C^0$ behavior of $\nu$ at $\Si$ to the metric data $(\g, \k, \g_{\cC})$ through the determination of $\a$. There are no 
$C^1$ compatibility conditions between $\nu$ and $(\g, \k, \g_{\cC})$, so the derivatives $\p_1 \nu^{\b} = \p_1 g^{0\b}$ (or equivalently the derivatives 
$\p_1 g_{0\b}$) may be freely prescribed. By Remark \ref{corner}, prescribing these prescribes the full first order jet of $g_{\a\b}$ except for the term 
$\p_0 g_{01}$. 

  Next we discuss the gauge field $V$. The data $V_S$ and $V_{\cC}$ must satisfy the $C^0$ condition 
\be \label{ComVo}
(V_S - V_{\cC}) |_{\Si} = 0.
\ee
This is the only compatibility condition for $V$ itself; it ensures that $V_S$ and $V_{\cC}$ extend to a $C^0$ vector field on $M$, when 
$V_S$ and $V_{\cC}$ are $C^0$ vector fields on $S$ and $\cC$ respectively. 
  
  Next, the gauge field $V$ couples to the other data $(\g_S, \k, \nu, \g_{\cC})$ by the definition \eqref{harm}, which we rewrite in the form 
\be\label{Vg}
V^\a=\Box_g x^\a=\p_\mu g^{\mu\a}+\tfrac{1}{2}g^{\a\mu}g^{\rho\s}\p_\mu g_{\rho\s}.
\ee
Setting $\a=1$ gives (since $g^{11} = 1$ along $S$)  
\bes
\begin{split}
V^1&=\p_0g^{01} + \tfrac{1}{2}g^{10}\p_0 g_{11}+g^{01}g^{01}\p_0 g_{01}+\tfrac{1}{2}g^{10}g^{AB}\p_0 g_{AB} +g^{01}\p_1 g_{01}+\tfrac{1}{2}g^{AB}\p_1 g_{AB}\\
&=\p_0g^{01}+(g^{01})^2\p_0g_{01}+\chi_1(\p \g_S,\k,\p \nu,\p \g_\cC).
\end{split}
\ees
Notice that $\p_0 g_{01}$ is not defined on $\cU$ so this is not a compatibility equation. Instead, this equation determines $\p_0 g_{01}$ (equivalent to $\p_0 g^{01}$ 
up to determined terms):
\bes
\p_0g_{01}=\chi_1(V^1,\p \g_S, \k,\p \nu,\p \g_\cC).
\ees 
It follows that all the first derivatives $\p_\rho g_{\a\b}$ are determined by the data $(V^1,\p \g_S,\k,\p \nu,\p \g_\cC)$ at $\Si$. Hence the components 
$V^a~(a\neq 1)$ are uniquely determined and satisfy the compatibility condition:
\be\label{Va}
V^a=\chi_1^a(V^1,\p \g_S,\k,\p \nu,\p \g_\cC), \ \ a=0,2,\dots, n.
\ee

\medskip 

Let $\cU_c^1 \subset \cU$ be the subspace satisfying the first order compatibility conditions above. The data in $\cU_c^1$ 
determines the full 1-jet of $g_{\a\b}$ at $\Si$. For simplicity of notation, we suppress here the index $s$ denoting the $H^s$ differentiability 
class. 
 
  We will need the Lemma below and those to follow for the non-linear analysis in \S 7. 
 
\begin{lemma}\label{Man1}
For $s$ sufficiently large (depending only on the dimension $n$), the space $\cU_c^1$ is a closed Hilbert submanifold of $\cU$. 
\end{lemma}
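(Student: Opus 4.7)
The plan is to realize $\cU_c^1$ as the zero set of a smooth submersion $\Psi$ from $\cU$ into an auxiliary Hilbert space of data on the corner $\Si$, and then conclude via the standard implicit function theorem / regular value theorem on Hilbert spaces.

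Concretely, one assembles the compatibility conditions \eqref{Com0}, \eqref{Com1}, \eqref{Comnu}, \eqref{ComVo} and \eqref{Va} into a single map
\begin{equation*}
\Psi:\cU\to\cY,\qquad \Psi=(\Psi_0,\Psi_1,\Psi_\nu,\Psi_V,\Psi_V^a),
\end{equation*}
where $\cY$ is the product of Sobolev spaces of traces on $\Si$ in which the corresponding residuals naturally live (for example $\Psi_0=(\g_S-\g_\cC)|_\Si \in H^{s-1/2}(\Si)$, $\Psi_1=\p_0\g_{AB}-\chi_1(\p\g_S,\k,\p\g_\cC)\in H^{s-3/2}(\Si)$, and so on, with the indices of differentiability dictated by the Sobolev trace theorem applied to the ambient regularities $\g_S\in H^s(S)$, $\k\in H^{s-1}(S)$, $\g_\cC\in H^{s-1/2}(\cC)$, $\nu\in H^s(S)$, $V_S\in H^{s-1}(S)$, $V_\cC\in H^{s-3/2}(\cC)$). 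The space $\cU_c^1$ is by definition $\Psi^{-1}(0)$.

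The first step is to verify that $\Psi$ is a well-defined smooth map between Hilbert spaces for $s$ large enough. Linearity and the trace theorem \eqref{Sob1}--\eqref{Sob2} take care of the trivial parts $\Psi_0, \Psi_V$. For the nonlinear components, one uses Lemma \ref{alpha} to write $\a=\a(\g_S,\k,\g_\cC)$ smoothly on the open set where Assumption $(*)$ holds (where the denominator in the quadratic formula for \eqref{vel} is nonvanishing), and notes that the $\chi_1,\chi_1^a$ are compositions of trace, multiplication and smooth algebraic operations in entries of $g^{\a\b}$; for $s>\tfrac{n+1}{2}+2$ this places us in the Sobolev algebra range where such Nemytskii operators are smooth.

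The second and main step is to show that $D\Psi_u$ is surjective with split kernel at every $u\in\cU_c^1$. Here one exploits the triangular structure of the constraints: each condition can be solved explicitly for a distinguished component, namely $\g_{\cC}|_\Si$ (for $\Psi_0$), the trace part $(\p_0\g_{AB})|_\Si$ of the time derivative of $\g_\cC$ (for $\Psi_1$), the restriction $\nu|_\Si$ (for $\Psi_\nu$), $V_\cC|_\Si$ (for $\Psi_V$), and the components $V^a|_\Si$ with $a\neq 1$ (for $\Psi_V^a$). Each of these trace values can be independently prescribed by an appropriate element of $\cU$ via standard Sobolev extension operators from $\Si$ into $\cC$ and $S$, which provides a bounded right inverse to $D\Psi_u$. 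The kernel then splits as the graph of this right inverse composed with the projection, giving $\cU_c^1$ the structure of a closed Hilbert submanifold of codimension equal to $\dim\cY$ (in the sense of Sobolev spaces on $\Si$).

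Closedness of $\cU_c^1$ is immediate from continuity of $\Psi$. The main obstacle is bookkeeping: tracking the exact Sobolev indices on $\Si$ (via \eqref{Sob1}--\eqref{Sob2}) so that each residual $\Psi_i$ lands in the correct target space and each right-inverse extension increases regularity by precisely the half-integer dictated by the trace theorem, and checking that the nonlinear substitutions defining $\chi_0,\chi_1,\chi_1^a$ preserve these regularities. Once these bookkeeping estimates are in place, the implicit function theorem in Hilbert spaces yields the conclusion.
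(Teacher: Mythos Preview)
Your proposal is correct and follows essentially the same approach as the paper: define a map $W^1$ (your $\Psi$) from $\cU$ into a product of Sobolev spaces on $\Si$ encoding the compatibility conditions \eqref{Com0}, \eqref{Com1}, \eqref{Comnu}, \eqref{ComVo}, \eqref{Va}, verify it is a smooth submersion by exploiting the triangular structure of the constraints (freely prescribing $\g'_\cC$, then $\p_0\g'_\cC$, then $\nu'$, then $V'_S$, then $V'_\cC$ at $\Si$), and conclude via the implicit function theorem for Hilbert manifolds. The paper's proof is slightly more terse and checks surjectivity by restricting to variations with $\g_S'=\k'=0$, but the strategy and the key observation (the triangular structure) are identical.
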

\begin{proof}
    Define the following map 
\be \label{CompOp1}
W^1: \cU \to Sym(\Si)^2 \times \bV'(\Si) \times \bV(\Si) \times \bV_{\cC}(\Si),
\ee
\bes
W^1(\g_S, \k, \nu_S, V_S, \g_{\cC}, V_{\cC}) = 
\left\{ 
\begin{array}{l}
(\g_S - \g_{\cC})|_{\Si} \\
\p_t \g_{\cC} - \chi(\p \g_S, \k, \p g_\cC) \\
\nu - \chi(\a) \\
(V_{\cC} - V_S)|_{\Si} \\
V_S^a - \chi^a(V_S^1,\p \g_S,\k,\p \nu,\p \g_\cC) \\
\end{array}\right.
\ees
In the above, all data on the right are evaluated at $\Si$; also $\bV_{\cC}(\Si)$ denotes vector fields at $\Si$ tangent to $\cC$. 
By definition, the inverse image $W^{-1}({\bf 0}) = \cU_c^1$. 

  We claim that $W^1$ is a submersion at $\cU_c^1$, i.e.~the derivative $DW^1$ is everywhere surjective on $\cU_c^1$. To see this, consider 
infinitesimal deformations of the form $(0, 0, \nu', V_S', \g_{\cC}', V_{\cC}' ) \in T(\cU)$. Then 
\bes 
\begin{split}
& DW^1(0, 0, \nu', V_S', \g_{\cC}', V_{\cC}' ) = 
\begin{cases}
- \g'_{\cC} \\
 \p_t (\g_{\cC}') - \chi(\a')\\
 \nu' - \chi(\a')\\
 V_S' - V_{\cC}'\\
 (V_S)^a)' - \chi^a((V_S^1)', \nu_S', \g_{\cC}') \\
 \end{cases}
\end{split}
\ees

  Again, all data on the right are evaluated at $\Si$. Using the triangular structure, it is easy to see this map is surjective. First $\g_{\cC}'$ may be prescribed 
arbitrarily. Next, $H'_{\cC}$, which gives $\a'$ via \eqref{vel}, may be arbitrarily prescribed, as is the trace-free part of $\p_0 \g_{\cC}'$. Having thus fixed the 
choice of $\a'$, $\nu'$ may be arbitrarily prescribed, and similarly for the terms $(V_S^a)'$. Given these choices, $V'_{\cC}$ 
may then also be freely and independently prescribed. This proves the surjectivity.

The kernel $Ker DW^1$ is thus a closed subspace of a Hilbert space, and so has a closed complement. It then follows from the implicit function theorem for 
Hilbert manifolds that $\cU_c^1 = W^{-1}({\bf 0})$ is a closed Hilbert submanifold of $\cU$. 

\end{proof}

  Next we consider the second and higher order compatibility conditions, as further conditions on the data in $\cU_c^1$. These are governed the coupling of the 
initial and boundary data above to the bulk equation $Q = \Ric_g + \d^*V_g$ in $\cT^H$ and to derivatives of the defining equation \eqref{harm} for $V$. 

Consider first the corner part $\g_{AB}$ of the metric. As in \eqref{L4}, this has the form 
$$-\tfrac{1}{2}\p_0\p_0 \g_{AB} + \tfrac{1}{2}g^{01}\p_{1}\p_0 \g_{AB} + \tfrac{1}{2}g^{ij}\p_i\p_j \g_{AB} + P_{AB}(\p g) - Q_{AB} = 0,$$
at $\Si$. Since $P(\p g)$ is first order in $g$, it is determined by the data in $\cU_c^1$, so $P(\p g) = \chi_1(\cU_c^1)$. Next, for the second term above, 
by \eqref{Com1}, $\p_1 \p_0 \g_{AB} = \p_{1}(\sqrt{1+\a^2}\k |_{\Si} - \a B_{\Si}) = \chi_2(\cU_c^1)$. Thus we can write the equation above as 
\be \label{com2g}
-\p_0\p_0 \g_{AB}  = \zeta_2(Q, \p^2\g_S, \p \k, \p \nu, V, \p \g_{\cC}). 
\ee
This compatibility equation couples the second order jet of $\g_{\cC}$ in the $t$-direction with the second order jet of $\g_S$ in the $x^1$ direction with data previously 
determined on $\cU_c^1$. 

  Next we discuss the compatibility conditions on $\nu_S$, followed by those on $V_S, V_\cC$. For simplicity, we assume below that the corner angle $\a=0$ along $\Si$; 
the analysis to follow works equally well for general corner angle with slightly more involved calculations. 

Recall that the timelike unit normal vector $\nu_S$ is given by $\nu_S = g^{0\a}\p_{\a}$ and it is easy to see that 
\be\label{nu11}
\p_1^2g_{0A}=-g_{AB}\p_1^2 \nu_S^B+O_1(g), \ \p_1^2 g_{01}=-\p_1^2\nu_S^1+O_1(g), \ \p_1^2 g_{00}={2}\p_1^2\nu_S^0+O_1(g)
\ee
where $O_1( g)$ is a term involving the $0,1$ jets of $g$. Now consider the bulk equation for $g_{0a}~(a=0,2, \dots, n)$ along the corner:
\bes
\begin{split}
0&=-\tfrac{1}{2}\p_0\p_0 g_{0a} +g^{01}\p_{1}\p_0g_{0a}  +\tfrac{1}{2}\p_1^2 g_{0a}+ P_{0a}(\p g) - Q_{0a} \\
&=\tfrac{1}{2}\p_1^2 g_{0a}+ P_{0a}(\p g) - Q_{0a}.
\end{split}
\ees
(If $g^{01} = \a \neq 0$, then the term $\p_1\p_0 g_{0a}$ couples to the term $\p_1 V^a$, as in the $\theta$ term below). 
Combining this with the first and last equation in \eqref{nu11} yields the following compatibility condition on $\p^2_1\nu_S^a$:
\be\label{com2nua}
\p^2_1\nu_S^a = \xi_1^a(Q,\p \g_S, \k,\p\nu,\p \g_\cC, V), \ \ a=0,2,\dots, n.
\ee
Next consider the bulk equation for $g_{01}$:
\be\label{b01}
\begin{split}
0&=-\tfrac{1}{2}\p_0^2 g_{01}  +\tfrac{1}{2}\p_1^2 g_{01}+ P_{01}(\p g) - Q_{01}. 
\end{split}
\ee
The second term is proportional to $\p_1^2 \nu_S^1$ and we claim this yields a compatibility equation of the form 
\be\label{com2nu1}
\p_1^2\nu_S^1=\xi_2^1(\p_0 V_1,\p_1 V_0,\p_S\p_\Si \nu_S, \p_S K, \p_S^2 g, \p g),
\ee
where, from \eqref{Vg},  
\be\label{Vgf}
V_\a:=g_{\a\b}V^\b=-g^{\rho\s}\p_\rho g_{\s\a}+\tfrac{1}{2}g^{\rho\s}\p_\a g_{\rho\s}.
\ee
The proof of \eqref{com2nu1}, similar to the arguments above, is rather long and so given in detail in the Appendix. 

Last, consider the bulk equation for $g_{11}$:
\bes
-\tfrac{1}{2}\p_0^2 g_{11}+\tfrac{1}{2}\p_1^2 g_{11}+P_{11}(\p g)-Q_{11}=0. 
\ees
This does not yield any compatibility condition, but instead determines $\p_0^2 g_{11}$ from the data in $\cU$. This then determines the full 
2nd order jet of $g$ at $\Si$. 

 Finally, setting $\a=0,2, \dots, n$ in \eqref{Vgf} and taking the $\p_0$ derivative then yields:
\bes\begin{split}
\p_0V_a&=-g^{\rho\s}\p_0\p_\rho g_{\s a}+\tfrac{1}{2}g^{\rho\s}\p_0\p_a g_{\rho\s}+O_1(g)\\
&=\p_0\p_0 g_{0 a}-\p_0\p_1 g_{1 a}-g^{AB}\p_0\p_A g_{B a}-\tfrac{1}{2}\p_0\p_a g_{00}+\tfrac{1}{2}\p_0\p_a g_{11}+\tfrac{1}{2}g^{AB}\p_0\p_a g_{AB}+O_1(g)\\
&=-\p_0\p_1 g_{1 a}+\tfrac{1}{2}\p_0\p_a g_{11}-g^{AB}\p_0\p_A g_{B a}+\tfrac{1}{2}g^{AB}\p_0\p_a g_{AB}+O_1(g), \\
\end{split}\ees
where all the terms on the last line are determined as described above. This gives compatibility conditions on $\p_0V_a$ along $\Si$:
\be\label{com2Va}
\p_0V_a=\chi_2^a(\p_0 V_1,\p_1 V_0,\p_S\p_\Si \nu, \p_S \k, \p_S^2 \g_S, \p \g_{\cC}, )\ \ a=0,2, \dots, n. 
\ee
In summary, the 2nd order compatibility conditions are given by the restrictions \eqref{com2g} on $\p_0 \g_{AB}$, \eqref{com2nua},\eqref{com2nu1} on $\p^2_1 \nu^\a$ 
and \eqref{com2Va} on $\p_0 V_a$.

\medskip 

The analysis above can be iterated as above to higher order: the $k$th order ($k\geq 2$) compatibility condition consists of restrictions on the choice of $\p^k_0 \g_{AB}$, 
the linear combination $\theta_1\p_1^k \nu^\a+\theta_2\p_1^{k-1} V^\a$ with $\theta_1,\theta_2$ not both zero, and $\p^{k-1}_0 V_a$.

Let $\cU_c^k~(k\geq 2)$ denote the space of initial and boundary data which satisfy the compatibility conditions up to order $k$. Then we have the following lemma.
\begin{lemma}\label{Compatibility}
For $s$ sufficiently large (depending only on the dimension $n$ and $k$), the space $\cU_c^{k}$ is a closed Hilbert submanifold of $\cU_c^{k-1}$. 
\end{lemma}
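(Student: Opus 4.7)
The plan is to extend the argument of Lemma \ref{Man1} by induction on $k$. The base case $k=1$ is Lemma \ref{Man1} itself, and the case $k=2$ follows directly from the explicit compatibility equations \eqref{com2g}, \eqref{com2nua}, \eqref{com2nu1}, \eqref{com2Va} just derived. For the inductive step, assume $\cU_c^{k-1}$ is already a closed Hilbert submanifold of $\cU$, and define a smooth map
$$W^k: \cU_c^{k-1} \to \cH^k(\Si),$$
into an appropriate Hilbert space of $\Si$-jets, whose zero-locus cuts out precisely the $k$-th order compatibility conditions: the restriction on $\p_0^k \g_{AB}$ coming from $(k-2)$-fold normal differentiation of the bulk equation $\Ric_g + \d^*V_g = Q$; the restriction on the linear combination $\t_1\p_1^k \nu^\a + \t_2\p_1^{k-1} V^\a$ obtained by iterating the bulk equations for $g_{0\a}$ and $g_{01}$; and the restriction on $\p_0^{k-1} V_a$ ($a\neq 1$) obtained by $(k-1)$-fold $\p_0$-differentiation of the harmonic gauge identity \eqref{Vg}. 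By construction $(W^k)^{-1}(\mathbf{0}) = \cU_c^k$.

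The core of the proof is to verify that $DW^k$ is surjective along $\cU_c^k$, so that, as in Lemma \ref{Man1}, the implicit function theorem for Hilbert manifolds realizes $\cU_c^k$ as a closed submanifold of $\cU_c^{k-1}$. The argument mirrors the surjectivity of $DW^1$ and exploits the same triangular structure. Given a target in $T\cH^k(\Si)$, one first freely prescribes $\p_0^k \g_{\cC}'$ (only lower-order $t$-jets of $\g_{\cC}$ are constrained by $\cU_c^{k-1}$), which controls the right-hand side of the first block of equations. Next, the freedom in the highest-order normal derivative $\p_1^k \nu^{\prime \a}$ realizes an arbitrary target in the second block; the leading coefficient is non-vanishing because it comes from the principal part of the wave operator $\p_1^2$ applied to $\nu^\a$, exactly as in the $k=2$ computation culminating in \eqref{com2nua}--\eqref{com2nu1}. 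Finally, the highest-order time derivative of $V_S'$ handles the $\p_0^{k-1}V_a'$ conditions independently. Closedness of $\ker DW^k$ and the existence of a closed complement then give the submanifold property as before.

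The main obstacle is bookkeeping. One must verify that each successive order of differentiation of the bulk equation \eqref{V0} and the gauge identity \eqref{Vg} produces genuinely new restrictions on previously unconstrained jet components, and that the lower-order jets appearing on the right-hand sides are smooth functions of the data in $\cU_c^{k-1}$ and hence already determined by the inductive hypothesis. This requires a careful recursive expansion, analogous in spirit to the $k=2$ computation sketched in the text and to the Appendix argument for \eqref{com2nu1}, to confirm that the principal terms $\p_0^k \g_{AB}$, $\p_1^k \nu^\a$, $\p_1^{k-1} V^\a$, and $\p_0^{k-1} V_a$ decouple once the lower-order compatibility conditions are imposed. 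Finally, the Sobolev threshold $s = s(n,k)$ must be taken large enough that the $\Si$-traces of all derivatives of the data up to order $k$ lie in their natural $H^{s-k-1/2}$ spaces via the trace inequalities \eqref{Sob1}--\eqref{Sob2}, so that $W^k$ is well-defined and smooth on $\cU_c^{k-1}$.
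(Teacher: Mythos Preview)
Your proposal is correct and follows essentially the same approach as the paper: define $W^k$ on $\cU_c^{k-1}$ by the three blocks of $k$-th order compatibility conditions (on $\p_0^k\g_{AB}$, on the combination $\theta_1\p_1^k\nu^\a+\theta_2\p_1^{k-1}V^\a$, and on $\p_0^{k-1}V_a$), then verify $W^k$ is a submersion using the freedom in the leading-order jets unconstrained by $\cU_c^{k-1}$. One minor correction: the freedom that hits the third block $\p_0^{k-1}V_a'$ comes from the $t$-jet of $V_{\cC}'$ at $\Si$, not from $V_S'$ (which lives on $S$ and admits only $\p_1$-derivatives); similarly, for the second block you should allow for the case $\theta_1=0$, in which case the freedom in $\p_1^{k-1}V_S^{\prime\,\a}$ is what realizes the target.
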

\begin{proof}
As in the analysis for the 2nd order compatibility condition, if $(\g_S, \k, \nu, V_S, \g_{\cC}, V_{\cC}) \in \cU_c^{k-1}$, then there is a unique way to determine 
all the derivatives $\p^{k-1} g_{\a\b}$ along the corner. The $k$th order compatibility condition is derived by the following steps:
\begin{enumerate}
\item Taking $\p_0^{k-2}$ of the bulk equation for $g_{AB}$ leads to the condition
\bes
\p_0^k g_{AB}=\zeta_k(\p^k_S \g_S,\p_S^{k-1}\k,\p^{k-1}_S\p_\Si \nu,\p^{k-1} g). 
\ees
\item Taking $\p_0^{k-2}$ of the bulk equation for $g_{0\a}$ leads to the compatibility condition with dominant term given by a linear combination 
\bes
\theta_1\p_1^k \nu_S^\a+\theta_2\p_1^{k-1} V^\a=\xi^\a_k(\p^{k-1}_0 V_1,\p^k_S \g_S,\p_S^{k-1}k,\p^{k-1}_S\p_\Si \nu,\p^{k-1} g), 
\ees
where $\theta_1,\theta_2$ are not both zero and depend on $k$ and the 0-jet of $g$.
\item Taking $\p_0^{k-1}$ of \eqref{Vgf} with $\a=0,2,\dots, n$ gives an equation for $\p_0^{k-1}V_a$, $a=0,2, \dots, n$:
\bes
\p_0^{k-1}V_a=\chi^{a}_{k-1}(\p^{k-1}_0 V_1,\p^k_S \g_S,\p_S^{k-1}\k,\p^{k-1}_S\p_\Si \nu,\p^{k-1} g). 
\ees
\end{enumerate}
    Define the following map 
\be \label{CompOp}
W^k: \cU_c^{k-1} \to Sym(\Si)^2 \times \bV(\Si) \times \bV(\Si),
\ee
\bes
W^k(Q, \g_S, \k, \nu_S, V_S, \g_{\cC}, V_{\cC}) = 
\left\{ 
\begin{array}{l}
\p_0^k \g_{AB}-\zeta_k\\
\theta_1\p_1^k \nu_S^\a+\theta_2\p_1^{k-1} V^\a-\xi_k^\a\\
\p_0^{k-1}V_a-\chi_{k-1}^a\\
\end{array}\right.
\ees
Since an element in $\cU_c^{k-1} $ can have arbitrary values in the leading terms above, it is easy to see that $W^k$ is a submersion and hence 
$\cU_c^{k}=(W^k)^{-1}(0)$ is a submanifold. 

\end{proof}

  To conclude this subsection, we turn to the constraint equations \eqref{Gauss}-\eqref{GC}; these induce a compatibility between 
the initial data $(\g_S, \k)$ on $\cI \times \cB$ and the bulk data $Q$ along the initial surface $S$. 

  Analogous the Lemmas above, define the constraint operator 
 $$C: \cU_c^k \times \Lambda^1(S) \to \Lambda^1(S),$$
\be \label{Const}
\begin{split}
& C(Q, \g_S, \k, V_S, \p_t V_S, \nu_S, \g_{\cC}, V_{\cC}) = 
\begin{cases}
|\k|^2 - (\tr_{\g_S}\k)^2  - R_{\g_S} + 3Q(\nu_S,\nu_S) - \tr_{\g_S}Q - 3\d^*V(\nu_S, \nu_S) + \tr_{g_S}\d^*V\\
{\rm div_{\g_S}}  \k - d_S(\tr_{\g_S}\k) - Q(\nu_S, \cdot) + \d^*V(\nu_S, \cdot). \\
\end{cases}
\end{split}
\ee

  We define 
 \be \label{TH}
 \cT^H = (\cU_c \times \Lambda^1(S))_0,
 \ee
to be the zero-set of the constraint operator $C$. By inspection in \eqref{Const}, on $\cT^H$, $\p_t V_S$ is uniquely determined by the remaining 
data in $\cU_c^k$, and so $\cT^H$ is naturally embedded as a subset of $\cU_c^k$. 

\begin{proposition}\label{manifold}
For $s$ sufficiently large (depending only on $n$ and $k$), the target space $\cT^H = (\cT^H)^k$ is a closed Hilbert submanifold of $\cU_c^k$. 
\end{proposition}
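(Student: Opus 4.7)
The approach mirrors the strategy used for Lemmas~\ref{Man1} and~\ref{Compatibility}: exhibit $\cT^H$ as the preimage of zero under a smooth submersion and apply the implicit function theorem in Hilbert manifolds. The relevant map here is the constraint operator $C$ from~\eqref{Const}, in which the factor $\Lambda^1(S)$ on the domain parametrizes the choice of $\p_t V_S$ and the factor in the codomain records the (scalar Hamiltonian, 1-form momentum) constraints. Smoothness of $C$ at the chosen Sobolev level reduces to smoothness of pointwise algebraic operations on the metric, its inverse, and first derivatives, followed by restriction to $S$; each of these operations is smooth on the relevant $H^s$-type spaces once $s$ is large enough that the Sobolev multiplication/inverse estimates apply, which is ensured by the standing assumption $s > \tfrac{n+1}{2} + 2$ together with the finite value of $k$.

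The key step is to verify that $DC$ is surjective at every point of $\cT^H = C^{-1}(0)$. For this I fix all other data and vary only $\p_t V_S$, i.e., consider infinitesimal deformations supported in the $\Lambda^1(S)$ factor. Since $V_S$ itself and the intrinsic data along $S$ are held fixed, the only terms of $C$ in~\eqref{Const} that are affected are $\d^*V(\nu_S,\nu_S)$, $\tr_{\g_S}\d^*V$ and $\d^*V(\nu_S,\cdot)$, and at leading order these reduce to the symmetrization of the transverse derivative of $V$, namely $(\delta \d^*V)(\nu_S,\nu_S) = \<\delta(\p_t V_S),\nu_S\>$ and $(\delta \d^*V)(\nu_S,X) = \tfrac{1}{2}\<\delta(\p_t V_S),X\>$ for $X$ tangent to $S$. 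Substituting into~\eqref{Const}, the partial derivative $\partial C/\partial(\p_t V_S)$ is a bounded, pointwise linear, algebraically invertible first-order map from $\Lambda^1(S)$ to $C^\infty(S)\times \Lambda^1(S)$ (with constants controlled by $g|_S$ and $\nu_S$). Consequently $DC$ is already surjective when restricted to the $\Lambda^1(S)$ factor, and admits a bounded right inverse.

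Since $DC$ has a bounded right inverse, its kernel is a closed complemented subspace at each point of $\cT^H$, so the implicit function theorem in Hilbert manifolds applies to realize $\cT^H \subset \cU_c^k \times \Lambda^1(S)$ as a smooth closed submanifold. Moreover, the isomorphism in the previous paragraph shows that the constraint $C=0$ can be solved globally and uniquely for $\p_t V_S$ as a smooth function of the remaining data in $\cU_c^k$, so the projection $\cT^H \to \cU_c^k$ is a smooth embedding onto a closed subset, which exactly realizes the identification of $\cT^H$ with a closed Hilbert submanifold of $\cU_c^k$ asserted in the proposition (and already observed in the paragraph preceding it). The main bookkeeping obstacle throughout is checking that the Sobolev indices match up under the nonlinear operations (products, traces, restrictions to $S$, application of the inverse metric), which is precisely what forces the lower bound on $s$ to grow with $k$.
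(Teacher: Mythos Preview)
Your approach is correct and close in spirit to the paper's, but you vary a different coordinate to obtain surjectivity of $DC$. The paper fixes all initial and gauge data and varies only the bulk factor $Q$ (written $F$ in the linearization), obtaining
\[
DC(F,0,\dots,0) = \bigl(3F(\nu_S,\nu_S) - \tr_{\g_S}F,\ F(\nu_S,\cdot)\bigr),
\]
which is visibly surjective. You instead fix $Q$ and vary the $\Lambda^1(S)$ factor $\p_t V_S$. Both routes work; yours has the mild advantage that invertibility in the $\p_t V_S$ direction directly yields the graph description $\cT^H \hookrightarrow \cU_c^k$ needed for the statement, whereas the paper's argument relies on the separate observation (stated just before the proposition) that $\p_t V_S$ is determined by the remaining data. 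Two small slips to clean up: (i) the spatial trace $\tr_{\g_S}\d^*V$ involves only tangential derivatives of $V$ and is \emph{not} affected by varying $\p_t V_S$, so drop it from your list of affected terms (this does not change your conclusion, since the remaining two terms already give an isomorphism); (ii) the resulting partial derivative is a pointwise algebraic, i.e.\ zero-order, isomorphism, not a first-order operator as you wrote.
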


\begin{proof} 
 As in the previous Lemmas, we claim that $C$ is a submersion. Namely, consider infinitesimal deformations of the 
form $(F, 0, 0, 0, 0, 0, 0, 0) \in T(\cU)$. Then 
\be 
\begin{split}
&DC(F, 0, 0, 0, 0, 0, 0) = 
\begin{cases}
3F(\nu_S,\nu_S) - \tr_{\g_S}F \\
F(\nu_S, \cdot). \\
\end{cases}
\end{split}
\ee
This map is clearly surjective, and the proof then follows as in the previous Lemmas. 

\end{proof}

  \subsection{Localization} 
  
   As usual with hyperbolic PDE problems, the main existence results will first be proved locally, in regions where the constant coefficient 
approximation is effective. Such local solutions will then be assembled together, by an essentially standard process, via rescaling and a partition of unity,

  Given a metric $g$ on $M$ as in \S 1, the localization at a point $p \in \Si$ is a (small) neighborhood $U \subset M$ of $p$, diffeomorphic via 
a local chart to a Minkowski corner 
$$\mathbf R=\{(t=x^0,x^1,\dots ,x^n):t\geq 0, x^1\leq 0\}$$ 
with $S\cap U \subset \{t = 0\}$, $\cC \cap U \subset \{x^1 = 0\}$ and $x^{\a}(p) = 0$. 

  In such a metrically small neighborhood $U$, as usual we renormalize the metric and coordinates simultaneously by rescaling; thus 
for $\l$ small, set 
\be \label{renor}
\w g = \l^{-2}g, \ \ \w x^\a=\l^{-1}x^\a,
\ee
so that $\p_{\w x^{\a}} = \l \p_{x^{\a}}$ and  
\be\label{rescale}
\w g(\p_{\w x^\a},\p_{\w x^\b})|_{\w x} = g(\p_{ x^\a},\p_{ x^\b})|_{\l x}. 
\ee
The equation \eqref{rescale} holds in the same way for any variation $h = \frac{d}{ds}(g + sh)|_{s=0}$ of $g$. Note that while the components 
$g_{\a\b}$ of $g$ are invariant under such a rescaling, all higher derivatives become small: 
\be \label{lambda}
\p_{\w x^{\mu}}^k \w g_{\a\b} = \l^k \p_{x^{\mu}}^k g_{\a\b} = O(\l^k).
\ee
Thus the coefficients are close to constant functions in the rescaled chart, 
\be \label{eps}
||\w g - g_{\a_0}||_{C^{k}(U)} \leq \e^k = \e^k(\l, g),
\ee
where $g_{\a_0}$ is a flat (constant coefficent) Minkowski metric. This is the frozen coefficient approximation. 

  The $\l \to 0$ limit blow-up metric $g_{\a_0}$ is the Minkowski metric, given in adapted coordinates by 
  \bes
g_{\a_0} =-dt^2-\a_0 dtdx^1+\sum_{i=1}^n(dx^i)^2.
\ees
Here $\a_0 = \a(p)$ is the value of the corner angle at $p$. 

  It is important to note here however that these limit Minkowski metrics do {\it not} satisfy Assumption (*). Thus, while $\l$ is small, the 
limit $\l \to 0$ is never taken in this work. We only consider metrics $g$, and the linearizations at such $g$, which satisfy (*). 

  For later use, we record how the data in $\cT^H$ and the Sobolev norms behave under rescaling. Thus, while $\Ric_{\w g} = \Ric_g$ as abstract 
symmetric bilinear forms, in local coordinates one has 
\bes
\begin{split}
&(\Ric_{\w g})_{\w \a \w \b} = \l^2 (\Ric_g)_{\a\b}, \\
&(K_{\w g})_{\w \a \w \b} = \l (K_g)_{\a\b}, \\
&(\w \nu_S)^{\w \a} = \nu_S^{\a}, \\
&(\w V)^{\w \a} = \l V^{\a}. \\
\end{split}
\ees

  Similarly, for $D \subset S$ or $D \subset \cC$ of dimension $n$, 
\be \label{lscale}
\begin{split}
&||f||_{H_{\w g}^s(D)}^2 = \l^{n - 2s} ||f||_{H_g^s(D)}^2, \\ 
&||f||_{\cN_{\w g}^s(M)}^2 = \l^{n- 2s} ||f||_{\cN_g^s(M)}^2 . \\
\end{split}
\ee
 
  \medskip 
 
  It is essentially clear that the local versions of the results stated throughout the paper hold for $g$ if and only if they hold for $\w g$. For example 
in the context of $D\Phi^H$ in \eqref{DF2} we have  
$$D\Phi_g^H(h) = ((\Ric + \d^*V)'_h, (h_S, K'_h, (\nu_s)'_h, V'_h |_S), (g_{\cC})'_h, V'_h |_{\cC}),$$
The components of $D\Phi_g^H(h)$ and $D\Phi_{\w g}^H(\w h)$ are related in local coordinates by
\bes
\begin{split}
&((\Ric_{\w g} + \w \d^*\w V)'(\w h), (\w h_S, \w K'_{\w h}, (\w \nu)'_{\w h}, \w V'_{\w h} |_S), ((\w \g_{\cC})'_{\w h}, \w V'_{\w h} |_{\cC})) |_x = \\
&(\l^2 (\Ric_g + \d^*V)(h), (h_S, \l K'_{h}, \nu'_h, \l V'_h |_S), ((\g_{\cC})'_h, \l V'_h |_{\cC}) )|_{\l x}.\\
\end{split}
\ees

  Now for any background metric $g$ and target data $\tau' \in T(\cT)$, form $\w g$ and $\w \tau'$ by the rescaling 
above, choosing $\l = \l(g)$ small enough, so that $\w g$ is $\e$-close to the constant coefficient metric $g_{\a_0}$. It is then easy to check 
that a solution to $D\Phi_{\w g}^H(\w h)=\w \tau'$ uniquely gives rise to a solution to $D\Phi_g^H(h)=\tau'$, where $h$ and $\w h$ 
are related as in \eqref{rescale}.\footnote{The same argument holds for an arbitrary cosmological constant $\Lambda$.} The same 
remarks apply to rescalings for $D\Phi$ and $D \hat \Phi$.

   Regarding the domain $U \subset M$,  we will always assume that $U$ is embedded in a larger region $\w U$, so 
$$U \subset \w U,$$
with $\w U$ still covered by the adapted coordinates $(t, x^i)$, with $t \geq 0$ and $x^1 \leq 0$ in $\w U$ so that the initial surface $S$, boundary 
$\cC$ and corner $\Si$ in $\w U$ are an extension of the corresponding domains in $U$. We also assume \eqref{eps} still holds in $\w U$. 
All target data in $\hat \cT$ and later $\cT^H$ given in $U$ is extended off $U$ to be of compact support in $\w U$ away 
from $S\cap U$ and $\cC \cap U$. In particular, all target data vanishes in a neighborhood of the full timelike boundary of $\w U$ and in 
a neighborhood of the initial slice $\{t = 0\}$ away from $\cC \cap U$ and $S \cap U$ respectively. The same statements hold for variations of the 
target data, i.e.~in $T(\hat \cT)$ or $T(\cT^H)$. 

   For later reference, we note that for the linear systems of wave equations on $\w U$ appearing in \S 3 and \S 4, the finite propagation speed 
property implies that solutions $h$ of the equations also have compact support in $\w U$ away from $S\cap U$ and $\cC \cap U$, for some 
definite (possibly small) time $t > 0$.

\subsection{Boundary data shift.}

  Based on prior work and analogies with the Riemannian setting, it is not to be expected that well-posedness of the Dirichlet IBVP can be proved directly. 
As noted in \S 1, some extra hypothesis such as Assumption $(*)$ must be invoked. 

   Instead, we proceed indirectly, based on the analogous procedure used in the elliptic or Riemannian case. As noted in the Introduction, 
this procedure originates in the work of Nash on isometric embeddings of Riemannian metrics into $\bR^N$. 

  Given $g$, consider arbitrary variations $h = \frac{d}{dt}(g + th)$ of $g$. Let $h^{\tT}$ denote the restriction of $h$ to the boundary $\cC$, 
so $h^{\tT}$ gives the variation of the induced metric $g_{\cC}$ on the boundary. Consider the equivalence relation 
\be \label{equiv}
h^{\tT} \sim h^{\tT} + fA,
\ee
for any function $f$. The relation \eqref{equiv} is well-defined only if $A$ never vanishes; this is ensured by Assumption $(*)$. Also, for $g$ or $h$ 
in $\cN^s(M)$, the Sobolev trace theorem \eqref{Sob1} gives $g_{\cC}, h^{\tT} \in H^{s-1/2}(\cC)$, while generically $A \in H^{s-3/2}(\cC)$ which is not 
contained in $H^{s-1/2}(\cC)$. Thus (strictly speaking) the relation \eqref{equiv} is not defined on function spaces having only finite differentiability. 
For this reason, we assume henceforth that $g \in C^{\infty}(M)$. In this case, \eqref{equiv} is well-defined. 

  Let $[h]_A$ denote equivalence class of $h^{\tT}$ in \eqref{equiv}. The representative of $[h]_A$ is chosen to be $h_A$ given by 
\be \label{orth}
\<h_A, A\> = 0,
\ee
pointwise. 

 As above, let $\nu = \nu_{\cC}$ be the unit normal vector field to $\cC$ in $M$. Note that 
$$fA = [\d^*(f\nu)]^{\tT} = \tfrac{1}{2}[\cL_{(f\nu)}g]^{\tT}.$$
Thus, let $X$ be a smooth extension of $f\nu$ to a vector field on $M$ and let $h = \d^*X$. Then at $\cC$,  
\be \label{normmove}
h^{\tT} = (\d^*X)^{\tT} = fA, \ \ h(\nu, \cdot) = (\d^* X)(\nu, \cdot) = df.
\ee
Even though well-known, it is worth emphasizing the following two points here. 

$\bullet$ At the boundary $\cC$, the deformation $\d^*X$ is algebraic in $f$ in boundary tangential directions, but is a first order 
derivative operator on $f$ in normal directions. 

$\bullet$ The deformation $\d^*X$ corresponds to an infinitesimal displacement of the boundary $\cC$ in the normal 
direction $f\nu$ of the background $(M, g)$. Thus, such deformations move the boundary $\cC$ within a fixed background metric; 
see Remark \ref{moveC} for a more detailed discussion of the nonlinear setting, as opposed to the linearized setting here. 

   Although the deformation $h = \d^*X$ is perhaps the most natural extension of the boundary deformation $fA$ of $h^{\tT}$, it is important to 
note that this deformation $\d^*X$ will {\it not} be used until the later parts of the main analysis; it only appears in the proof of Theorem \ref{Direxist} 
relating the shifted boundary data (discussed next) with the Dirichlet boundary data. Instead, the extension of the deformation $fA$ of $g_{\cC}$ 
to a $h$ deformation of $g$ on $M$ is done locally in coordinates without involving any components of the form $dx^1\cdot dx^{\a}$, 
cf.~Remark \ref{fAext} below. 

  Having reduced the Dirichlet data on $\cC$ by one degree of freedom, one needs to add back another scalar degree of freedom $\eta$ 
with value $\eta_h$ depending on $h$, i.e.~ consider the shifted pair 
\be \label{bp}
(h_A, \eta_h),
\ee
in place of the pair $(h_A, f)$ which gives back Dirichlet boundary data by \eqref{equiv}. There are many possible choices for the scalar field $\eta$ on $\cC$. 
The interaction or relation of the choice of $\eta$ with the choice of gauge field $V'_h$ plays a crucial role in the analysis of well-posedness. 
Given the choice of harmonic gauge \eqref{harm}, an effective choice for $\eta$ is given by 
\be \label{eta}
\eta_h = h(\p_t, \nu) + h(\nu,\nu) := h_{0\nu} + h_{\nu \nu},
\ee
where $\nu = \nu_{\cC}$. Of course, the boundary data \eqref{eta} is not invariant under the gauge group $\Diff_0(M)$. In terms of coordinates, 
$\nu = \nu_{\cC}$ is extended into $M$ via coordinates, as 
$$\nu = \frac{1}{|\nabla x^1|}\nabla x^1 = \frac{g^{1\a}}{|\nabla x^1|} \p_{\a}.$$

  The proof of well-posedness at the linearized level relies essentially on the shifted boundary data \eqref{bp}-\eqref{eta}. Given this, the 
passage from well-posedness with shifted boundary data to well-posedness with Dirichlet boundary data, 
$$(h_A, \eta_h) \to h^{\tT}$$
is much simpler, cf.~Theorem \ref{Dirglobexist}.  

\medskip 

   The following result, analogous to Remark \ref{corner}, will be needed in \S 3. 
    
 \begin{lemma} \label{fft}
The initial data $(h_S, K'_h)$ and boundary data $h_A$ determine $f|_{\Si}$ and $\p_t f|_{\Si}$.
\end{lemma}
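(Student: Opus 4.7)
The idea is to read off $f|_\Si$ and $\p_t f|_\Si$ by combining the defining identity $h^{\tT} = h_A + fA$ on $\cC$ with the linearizations of the $C^0$ and $C^1$ corner compatibility conditions, and then inverting a single scalar coefficient that is positive precisely because of Assumption (*).

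For $f|_\Si$, I would linearize the $C^0$ compatibility equation \eqref{Com0}, $(\g_S - \g_\cC)|_\Si = 0$, to obtain $h_S(X,Y)|_\Si = h^{\tT}(X,Y)|_\Si$ for every $X, Y \in T\Si$. Substituting $h^{\tT} = h_A + f A$ and tracing with $\g_\Si^{-1}$ (where $\g_\Si := \g_S|_\Si = \g_\cC|_\Si$) yields the scalar equation
\be \label{eqf0}
f|_\Si \, \tr_\Si A \;=\; \tr_\Si(h_S - h_A)|_\Si.
\ee
Working in the Fermi coordinates for $\g_\cC$ used in \S 2.3, the decomposition $H = \tr_{g_\cC} A = -A(\p_t,\p_t) + \tr_\Si A$ gives $\tr_\Si A = H + A(\p_t,\p_t)$. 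Assumption (*) requires $\Pi(\p_t,\p_t) = -H - A(\p_t,\p_t) < 0$, so $\tr_\Si A > 0$ and \eqref{eqf0} determines $f|_\Si$.

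For $\p_t f|_\Si$, differentiating $h^{\tT} = h_A + f A$ in $t$ and tracing on $\Si$-tangential directions gives
\be \label{eqptf}
(\p_t f)|_\Si \,\tr_\Si A \;=\; \tr_\Si (\p_t h^{\tT})|_\Si - \tr_\Si (\p_t h_A)|_\Si - f|_\Si \,\tr_\Si (\p_t A)|_\Si.
\ee
The terms $\p_t h_A|_\Si$ and $\p_t A|_\Si$ are known from the prescribed $h_A$ on $\cC$ and the background $g$, and $f|_\Si$ was just obtained. To compute $\p_t h^{\tT}|_\Si$ I would linearize the $C^1$ compatibility condition \eqref{LH}, $\cL_{\p_0}\g_\cC|_\Si = \sqrt{1+\a^2}\,\k|_\Si - \a B_\Si$; the result expresses $\cL_{\p_t} h^{\tT}|_\Si$ in terms of $K'_h$, $h_S$, $\cL_{\p_1} h_S$, and the linearization $\a'$. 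By Lemma \ref{alpha}, $\a'$ is a smooth function of $(h_S, K'_h, h^{\tT}|_\Si)$, and $h^{\tT}|_\Si = h_A|_\Si + f|_\Si A|_\Si$ is now available. Dividing \eqref{eqptf} by $\tr_\Si A > 0$ then yields $\p_t f|_\Si$.

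The sole substantive input is the positivity $\tr_\Si A > 0$, which is exactly the projection of Assumption (*) onto $\Si$-tangential directions; everything else is direct bookkeeping in the linearized compatibility identities at the corner. If (*) were dropped, the scalar coefficient in \eqref{eqf0}--\eqref{eqptf} could vanish and $f$ would not be recoverable from the modified boundary data, mirroring the loss of well-posedness discussed in \S 1.
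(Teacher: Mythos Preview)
Your argument for $f|_\Si$ is correct and actually sharper than the paper's: you explicitly derive $\tr_\Si A > 0$ from Assumption (*), which both inverts your scalar equation and justifies the paper's bare assertion that ``$A|_\Si$ is nowhere vanishing''.

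For $\p_t f|_\Si$ there is a genuine gap. You invoke the linearization of Lemma~\ref{alpha} to obtain $\a'$ as a function of $(h_S,K'_h,h^{\tT}|_\Si)$, but this is circular: the linearized relation \eqref{vel} involves $(H_\Si^\cC)'$, and the mean curvature of $\Si$ in $(\cC,\g_\cC)$ is first-order in $\g_\cC$, so $(H_\Si^\cC)'$ depends on $\p_t h^{\tT}|_\Si$. That is exactly the quantity you are trying to determine, since $\p_t h^{\tT}|_\Si = \p_t h_A|_\Si + (\p_t f)A|_\Si + f\,\p_t A|_\Si$. Substituting your \eqref{eqptf} into the traced linearized \eqref{LH} therefore leaves you with one equation in the two unknowns $\p_t f$ and $\a'$. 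Note that the paper's Remark~\ref{angvar} determines $\a'$ only \emph{after} Lemma~\ref{fft}, not before.

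The paper sidesteps this by not linearizing the geometric identity \eqref{LH} at all. Instead it applies the \emph{background} decomposition $\p_t = \sqrt{1+\a^2}\,\nu_S - \a N$ directly to the tensor $h$, writing
\[
\nabla_{\p_t} h_\Si \;=\; \sqrt{1+\a^2}\,\nabla_{\nu_S} h_\Si \;-\; \a\,\nabla_N h_\Si,
\]
with $\a$ fixed at its background value. The two terms on the right are then read off from $K'_h$ and from the variation of the second fundamental form of $\Si\subset(S,g_S)$ (hence from $h_S$), respectively, without $\a'$ ever appearing. You can repair your argument by making this substitution and then tracing over $\Si$ as you do for $f|_\Si$.
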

We note that this determination of $f|_{\Si}$ and $\p_t f|_{\Si}$ is independent of $\eta_h$. 
\begin{proof} 
To see that $f |_{\Si}$ is determined, observe that the variation $h_S$ of the initial metric on $S$ determines the variation $h_{\Si} = h^{\tT} |_{\Si}$ 
of the induced metric on $\Si$. We have $h^{\tT} = h_A + fA$ and so $h^{\tT} |_{\Si} = (h_A)|_{\Si} + fA|_{\Si}$. Since $A|_{\Si}$ is nowhere 
vanishing (by Assumption (*)), this determines $f$ uniquely at $\Si$. Thus $h_{S}$ and $h^{\tT}$ are determined at $\Si$. 

  Next, let $h_{\Si}$ denote the components of $h$ tangent to $\Si_t$ along $\cC$. Then 
\be \label{Tf}
\nabla_{\p_t} h_{\Si} = \nabla_{\p_t} (h_A)_{\Si} + \p_t(f) A_{\Si} + f\nabla_{\p_t} A_{\Si}.
\ee
Using \eqref{TC}, $\nabla_{\p_t} h_{\Si} = \sqrt{1+\a^2}\nabla_{\nu_S} h_{\Si} - \a \nabla_{N} h_{\Si}$, where $N$ is the unit outward normal to $\Si \subset 
(S, g_S)$. The angle $\a$ is given (its background data from the metric $g$). Also it is easy to see that the term $\nabla_{\nu_S} h_{\Si}$ is determined by 
$K'_h$ (and by $h_S, h^{\tT}$), while $\nabla_N h_{\Si}$ is determined in the same way by the variation of the second fundamental form $B$ of 
$\Si \subset (S, g_S)$. This shows that the left side of \eqref{Tf} is determined by the initial and boundary data.  

  For the right side, $\nabla_{\p_t}(h_A)_{\Si}$ is determined by the boundary data $h_A$, while $\nabla_{\p_t} A_{\Si}$ is determined by the 
background metric $g$ and $f$ is determined on $\Si$ as above. It follows that $\p_t(f)A_{\Si}$ is determined. Since Assumption (*) implies 
$A_{\Si}$ is not identically zero, it follows that $\p_t f$ is determined along $\Si$.  
 
\end{proof}

\begin{remark} \label{angvar} 
{\rm The variation $\a'_h$ of the corner angle is also determined by the shifted boundary data $(h_A, \eta_h)$, in fact just by $h_A$. This follows 
immediately as in Lemma \ref{alpha}, since $h^T$ and $\p_t h^T$ are determined at $\Si$ by Lemma \ref{fft}.  
}
\end{remark}

\begin{remark} \label{moveC}
{\rm The discussion above has been at the linearized level, but it worthwhile to briefly consider the full nonlinear situation. (This will not be used in the 
following however). Assume for simplicity $M$ is simply connected. Define two metrics $(M_1, g_1)$, $(M_2, g_2)$ to be equivalent if there is a domain 
$(\O, g)$ and isometric embeddings $\iota_1: (\O, g) \to (M_1, g_1)$ and $\iota_2: (\O, g) \to (M_2, g_2)$. Thus both $(M_1, g_1)$ and $(M_2, g_2)$ are 
extensions of a common metric $(\O, g)$. For example if $M_1 \subset M_2$, then $g_2$ is an extension of $g_1$. 

  The inclusion relation gives a partial order and in any equivalence class (by taking the union) there is a unique maximal representative. 
  Next, define an equivalence relation on the boundary metrics $\g = g_{\dm}$: $\g_1 \sim \g_2$ if there is a curve of metrics $g_s$ in a fixed 
equivalence class $[g]$ as above such that $\g_s$ satisfies 
$$\frac{d}{ds}\g_s = f_s A_s,$$
for some curve of functions $f_s$. Thus the boundary is being deformed in normal directions within the maximal representative metric. This leads to the 
equivalence relation $[h]_A$ in \eqref{equiv}. However, it is not clear if the shifted boundary data \eqref{bp} arise naturally as the linearization or derivative of a 
smooth nonlinear map $\w \Phi$ or $\w \Phi^H$. This won't be needed however. 
 }
 \end{remark}

    Since we have introduced the shifted boundary conditions \eqref{bp} in place of Dirichlet boundary data $h^{\tT}$, we will work with 
the modification of the derivative $D\Phi^H$ in \eqref{PhiH} given by the linear map 
 \be \label{wPhiH}
D\w \Phi_g^H: T_gMet_s(M) \to T( \w \cT^H),
\ee
$$D\w \Phi_g^H(h) = (\Ric'_h + (\d_g^* V)'_h, (h_S, K'_h, (\nu'_S)'_h, (V_S)'_h), (h_A, \eta_h), (V_{\cC})'_h).$$
Note that $D\w \Phi^H$ and $D\Phi^H$ differ only in the boundary data space.\footnote{As noted above, $D\w \Phi^H$ and $T(\w \cT^H)$ are merely (suggestive) 
notation for the linear map and target space; they do not denote the derivative of a nonlinear map.} It follows from Lemma \ref{fft} and Remark \ref{angvar} that 
the full Dirichlet boundary data $h|_{\Si}$ is determined at the corner by $(h_A, \eta_h)$; hence the compatibility conditions in \S 2.3 for the modified boundary 
data are the same as for Dirichlet boundary data $h^{\tT}$. 

   All of the linear analysis to follow, namely existence results and energy estimates, will first be carried out for the linear map $D\w \Phi^H$. Given that, 
 it is relatively easy to obtain the same results for the original maps $D\Phi^H$ as well as for $D\hat \Phi$ and $D\Phi$.

 \section{Linearized boundary equations.}
  
    In this section, we derive equations for the parts of the boundary data $h_{\a\b}|_{\cC}$ not already 
determined by the shifted target data $(h_A, \eta_h)$. These undetermined components are $f$, i.e.~$fA$, and $h(\nu, \cdot)$ of $h$ at $\cC$. 
These equations play a crucial role in the subsequent analysis of energy estimates and existence of linearized solutions. 

  Throughout this section we work along the boundary $\cC$, so that $\nu = \nu_{\cC}$. We also work primarily in the localized setting of \S 2.4. 

  To start, the Dirichlet boundary data $h_A$ are specified, (corresponding to specifying $n(n+1)/2 - 1$ degrees of freedom) at $\cC$ as well as 
the scalar $\eta$ with 
\be \label{eta'}
h_{0\nu} + h_{\nu \nu} = \eta_h.
\ee
There are $n+1$ remaining degrees of freedom which need to be specified as boundary conditions, since the full metric or variation $h_{\a\b}$ has 
$(n+1)(n+2)/2$ degrees of freedom. These remaining boundary conditions come from the choice of gauge, here the harmonic gauge $V'_h$. Using 
the expression \eqref{V'}  decompose the ambient Bianchi operator $\b_g$ into the part $\b_{\cC}$ operating only on data tangent to $\cC$ together with the remaining 
normal components. Basic computation then shows that  
\be \label{gauget}
(V'_h)(\p_t) = - \<\nabla_{\nu}h(\nu)^{\tT}, \p_t\> + \b_{\cC}h{^\tT}(\p_t) + \tfrac{1}{2}\p_t h_{\nu \nu} - \<(A + H\g)h(\nu)^{\tT}, \p_t\> - \<D^2t, h\>,
\ee
\be \label{gaugeSi}
(V'_h)(\p_a) = - \<\nabla_{\nu}h(\nu)^{\Si}, \p_a\> + (\b_{\cC}h{^\tT})(\p_a) + \tfrac{1}{2}\p_a h_{\nu \nu} - \<(A + H\g)h(\nu)^{\tT}, \p_a\> 
- \<D^2 x^a, h\>,
\ee 
\be \label{gaugen}
(V'_h)(\nu) = -\tfrac{1}{2}\nu(h_{\nu \nu}) + \tfrac{1}{2}\nu(\tr_{\cC}h^{\tT}) + \d_{\cC}(h(\nu)^{\tT}) - h_{\nu \nu}H + \<A, h^{\tT}\> - \<D^2 x^1, h\>
\ee 
Here $\p_a$ is tangent to $\Si_t$, so $a = 2, \dots n$. The left sides of \eqref{gauget}-\eqref{gaugen} are given boundary data in $T(\w \cT^H)$. 
Note that the main terms on the right are Neumann derivatives of $h(\nu)^{\tT}$, and recall that Neumann boundary data do not admit good 
energy estimates, cf.~Remark \ref{Neumann}. 

   In these $n+1$ gauge equations, or the given target data $(h_A, \eta_h)$, there is no simple equation for $f$. To derive an explicit equation for 
$f$, we use the linearization of the Hamiltonian constraint \eqref{Gauss}. Let 
 $$\w \Box_{\cC} \f = \<D^2f, \Pi\> = \<D^2 \f, Hg_{\cC} - A\>.$$
 Under the convexity Assumption $(*)$, this is a hyperbolic wave-type operator on $\f$ on $\cC$. This hyperbolicity is the main reason for 
 the use of Assumption (*). 
 
\begin{remark} \label{renorm} 
{\rm The results of this section apply in the unrenormalized or global setting $(M, g)$ or in the localized setting discussed in \S 2.4. However, a 
renormalization is needed in the localized context. 

   Namely, recall the splitting \eqref{equiv}, $h^{\tT} = h_A + f A$ used in the definition of $D\w \Phi^H$. The metric $g$ (or boundary metric 
$g_{\cC}$) and the second fundamental form $A$ scale differently under rescaling. Thus, in the localization 
setting of \S 2.4, we have $\w h^{\tT} = \w h_A + f\w A$ and $|\w A|_{\w g} \sim \l$ is small, $\l \sim \e$, while $\w h$ is $O(1)$, so that $O(f\w A) = 
\l O(f)$. For this reason, we 
renormalize $A$ by setting 
\be \label{Arenorm}
\hat A = \w A/\l 
\ee
in place of $\w A$, so that $f = O(1)$ and $\hat A = O(1)$. With this understood, we then drop the tilde from the notation, so in the following 
$\w g$, $\w h$ etc are denoted by $g$, $h$, etc. Of course $\hat A = A$ when no rescaling is performed. 

In the localized setting, the operator $\w \Box_{\cC}$ then has the form 
$$\w \Box_{\cC}\f = -\<D^2f, \hat \Pi\> = \<D^2 \f, \hat H g_{\cC} - \hat A\>.$$
}
\end{remark}

 First we derive the equation for $f$ along $\cC$. 
  
\begin{lemma}
The function $f$ satisfies the equation 
\be \label{Hlin}
\w \Box_\cC f  + q(\p f) = Y(\cT, V'_h) + E(h),
\ee
along the boundary $\cC$, where 
\be \label{finhom}
\begin{split}
Y(\cT, V'_h) &= R'_{h_A} - \tr(F-\d^* V'_h) + 2(F-\d^* V'_h)(\nu,\nu),\\
E(h) &= 2\<A'_h, A\> - 2H_\cC H'_h - 2\<A\circ A, h\> + 4\Ric(\nu,\nu'_h) + \<\Ric, h\>\\
&- \tr(\d^*)'_h V - 2(\d^*)'_h V(\nu,\nu),
\end{split}
\ee
and 
\be \label{qf} 
q(\p f) = f\Box_{\cC}\hat H + 2\<df, d\hat H\> - f\d_{\cC}\d_{\cC}\hat A + 2\d_{\cC}\hat A(df) + f\<\Ric_{\cC}, \hat A\>.
\ee
Under Assumption $(*)$, \eqref{Hlin} is a hyperbolic evolution equation. Moreover, the initial data $f$ and $\p_t f$ for $f$ at the initial surface 
$\Si$ are determined by the initial and boundary data in $T(\cT)$. 
\end{lemma}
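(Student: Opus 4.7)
The approach is to linearize the Gauss (Hamiltonian) equation for the timelike hypersurface $\cC \subset (M,g)$. In analogy with \eqref{Ham}/\eqref{Gauss} for the spacelike slice $S$, the Gauss equation for the timelike boundary reads schematically
\[
R_{g_\cC} = R_g - 2\Ric_g(\nu,\nu) + |A|^2 - H^2,
\]
(with the sign appropriate for a spacelike outward normal). I take the $h$-linearization of this identity. The left side becomes $R'_{h^{\tT}}$; the right side produces $\<A'_h,A\>$, $HH'_h$, a contraction of $\Ric_g$ with $h^{\tT}$, together with the variations $\tr \Ric'_h$, $\Ric'_h(\nu,\nu)$, and a term proportional to $\Ric(\nu,\nu'_h)$ coming from the variation of the normal vector.

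The crucial computation is the scalar-curvature side. Since $h^{\tT}=h_A+fA$, I split $R'_{h^{\tT}} = R'_{h_A}+R'_{fA}$. The term $R'_{h_A}$ depends only on target data and goes into $Y(\cT)$. For $R'_{fA}$, I apply the standard linearization formula on $(\cC,g_\cC)$,
\[
R'_T = -\Delta_{\cC}(\tr_{g_\cC} T) + \d_\cC\d_\cC T - \<T,\Ric_\cC\>,
\]
with $T=fA$. After the Leibniz expansion, the second-order terms in $f$ collapse to
\[
-H\,\Delta_\cC f + \<A,D^2 f\> = -\<Hg_\cC - A,\,D^2 f\> = -\<\Pi,D^2 f\> = -\w\Box_\cC f,
\]
which is precisely the wave-type operator from the statement (this is where Assumption $(*)$ enters decisively: it is exactly the hyperbolicity of $\w\Box_\cC$). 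The remaining terms from $-\Delta_\cC(fH)$, $\d_\cC\d_\cC(fA)$, and $-\<fA,\Ric_\cC\>$ are zeroth- and first-order in $f$, and a direct regrouping matches them with the listed $q(\p f)$, up to the $\lambda$-renormalization $A\mapsto\hat A$, $H\mapsto\hat H$ of Remark \ref{renorm}.

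For the bulk Ricci contributions, I substitute the reduced Einstein relation $\Ric'_h = F - \d^*V'_h - (\d^*)'_hV$ and then decompose $V'_h = V'_F + W'_h$ via \eqref{FW}. The pieces determined entirely by target data, namely $R'_{h_A}$, $\tr(F-\d^*V'_F)$ and $(F-\d^*V'_F)(\nu,\nu)$, assemble into $Y(\cT)$; the pieces actually depending on $h$, namely $\<A'_h,A\>$, $H H'_h$, $\<A\circ A,h\>$, $\Ric(\nu,\nu'_h)$, $\<\Ric,h\>$, together with the $W'_h$ and $(\d^*)'_h V$ contributions, assemble into $E(h)$. This gives the claimed equation \eqref{Hlin} with $Y(\cT)$ and $E(h)$ as in \eqref{finhom}. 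The statement about the initial data $f|_{\Si}$ and $\p_t f|_{\Si}$ then follows immediately from Lemma \ref{fft}, which shows these are determined by $(h_S,K'_h,h_A)$ alone (independently of $\eta_h$).

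The main obstacle I expect is the bookkeeping in the $R'_{fA}$ computation: carefully extracting the principal symbol as $-\<\Pi,D^2f\>$ and verifying that the remaining first-order combination
$-2\<df,dH\> - f\Delta_\cC H + 2\<df,\d A\> + f\,\d\d A - f\<A,\Ric_\cC\>$
matches $-q(\p f)$ exactly after the $\hat A,\hat H$ renormalization (including the sign/signature conventions between $\Box_\cC$ and $\Delta_\cC$ on the Lorentzian manifold $\cC$). Separating the target-only from $h$-dependent contributions in the Ricci linearization is routine once the $V'_F$ / $W'_h$ split is in place, but requires care with the $(\d^*)'_h V$ term, which is first-order in $h$ but involves the background gauge field $V$.
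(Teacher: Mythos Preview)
Your proposal is correct and follows essentially the same route as the paper: linearize the Gauss equation along $\cC$, split $R'_{h^{\tT}} = R'_{h_A} + R'_{fA}$, extract the principal part $\<D^2 f,\Pi\>$ from $R'_{fA}$ via the standard scalar-curvature variation formula, substitute $\Ric'_h = F - \d^*V'_h - (\d^*)'_h V$ with the decomposition $V'_h = V'_F + W'_h$, and invoke Lemma~\ref{fft} for the initial data. The only caveat is the sign you flag yourself: the paper's Gauss equation on $\cC$ reads $|A|^2 - H^2 + R_{g_\cC} = R_g - 2\Ric(\nu,\nu)$, so your schematic version has the $|A|^2 - H^2$ term on the wrong side, but this is precisely the bookkeeping issue you anticipate and does not affect the method.
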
 

\begin{proof} 
The Hamiltonian constraint analogous to \eqref{Gauss} along the timelike hypersurface $\cC$ is given by 
$$|A|^2 - H^2 + R_{g_{\cC}} = R_g - 2\Ric(\nu, \nu), \ \footnote{Note the difference in the sign change in the $R_{g_{\cC}}$ term compared with the 
Hamiltonian constraint \eqref{Ham} along $S$.}$$
where again $\nu = \nu_{\cC}$. This holds either for $g$ or the rescaled $\w g$. Upon linearization, this gives 
\be \label{Hamlin2}
2\<A'_h, A\> - 2\<A^2, h\>-2H H'_h+R'_{h^{\tT}}=R'_h-2\Ric'_h(\nu,\nu)-4\Ric(\nu,\nu'_h)
\ee 
Note that 
\bes
R'_{h^{\tT}} = R'_{f\hat A}+R'_{h_A}=-\Box_\cC (f\hat H) + \d_\cC\d_\cC(f\hat A)-\<\Ric_\cC, f\hat A\> + R'_{h_A}=
-\hat H\Box_\cC f + \<D^2 f, \hat A\> + R'_{h_A} - q(\p f),
\ees
where $q(\p(f)$ is given as in \eqref{qf}. So we obtain:
\bes
\w \Box_\cC f = R'_{h_A} +2\<A'_h, A\> - 2\<A^2, h\> - 2H_\cC H'_h - R'_h + 2\Ric'_h(\nu,\nu) + 4\Ric(\nu,\nu'_h) - q(\p f). 
\ees
Also $R'_h=\tr \Ric'_h-\<h,\Ric\>$ and $\Ric'_h=F-\d^* V'_h-(\d^*)'_h V$. Thus
\bes\begin{split}
\w \Box_\cC f + q(\p f) &= R'_{h_A} - \tr(F - \d^* V'_h) + 2(F - \d^* V'_h)(\nu,\nu)\\
& + 2\<A'_h, A\> - 2H_\cC H'_h - \<A^2, h\> + \tr (\d^*)'_h V - 2(\d^*)'_h V(\nu,\nu) \\
& + 4\Ric(\nu,\nu'_h) + \<h, \Ric\>,\\
\end{split}
\ees
which gives the result. 

By Lemma \ref{fft}, the initial data $f|_{\Si}$ and $\p_t f |_{\Si}$ for $f$ at $\Si$ are determined by the target data in $T(\cT)$. 

\end{proof}

Note that the coefficients of the variation $h$ on the right hand side of $E(h)$ in \eqref{finhom} as well as the coefficients of the first order term $q(\p f)$ are all 
$O(\e)$ in the rescaled metric $\w g$, by \eqref{eps}. 

\begin{remark} \label{Id}
 {\rm  Consider the infinitesimal deformation $\hat h = \d^*(f\nu)$. When $\Ric_g = 0$, this satisfies the linearized Hamiltonian constraint \eqref{Hamlin2} 
 for any $f$, (and in any gauge). So the equation \eqref{Hamlin2} imposes no restriction on $f$. To explain this, the right hand side of \eqref{Hlin} involves 
 $A'_{\hat h}$ and $H'_{\hat h}$. These terms are first order in $\hat h$, and so second order in $f$. Thus \eqref{Hlin} is an identity between both sides 
 and involves no restrictions on $f$. 

   The term $A'_h$ in particular involves $\d^* h(\nu)^{\tT}$, (cf.~\eqref{A'} below). For $\hat h$, one has $\hat h(\nu)^{\tT} = d_{\cC}f$ and 
$\d^* \hat h(\nu)^{\tT} = D^2f$, indicating that \eqref{Hlin} is then just an identity. However, in the analysis to follow, $\w \Box_{\cC} f$ will 
be treated as second order in $h$, while $A'_h$ is treated as first order in $h$. Thus, $f$ is only determined or controlled on $\cC$ by \eqref{Hlin} 
provided the other components of $h$, in particular, $h(\nu, \cdot)$ are already determined or controlled. 
   
   In fact, as noted preceding \eqref{bp}, the boundary data $fA$ will {\it not} be extended into the bulk $M$ as $\d^*(f\nu)$, for an extension 
of $\nu$ into $M$. Instead, it will be extended component or coordinate-wise, cf.~Remark \ref{fAext}  below. 
}
\end{remark}

  Returning to \eqref{gauget}-\eqref{gaugen}, write the term $\b_{\cC}h^{\tT}$ as 
$$\b_{\cC}h^{\tT} = \b_{\cC}h_A + \b_{\cC}(f\hat A).$$ 
The term $\b_{\cC}h_A$ is part of the target data in $T(\w \cT)$, while for the second term we have 
$$\b_{\cC}(f\hat A) = \d_{\cC}(f\hat A) + \tfrac{1}{2}d\tr f\hat A = f \d \hat A - \hat A(\nabla f) + \tfrac{1}{2}(\hat Hdf + fd\hat H).$$ 
This gives   
\be \label{bA}
\b_{\cC}h^{\tT} = \b_{\cC} h_A -(\hat A - \tfrac{\hat H}{2}\g)(\nabla f) + f(\d \hat A + \tfrac{1}{2}d\hat H).
\ee
In particular, \eqref{gauget} may be rewritten in the form 
\be \label{gt}
\begin{split}
\<\nabla_{\nu}h(\nu)^T, \p_t\> = & \tfrac{1}{2}\p_t (h_{\nu \nu}) - (\hat A - \tfrac{\hat H}{2}\g)(\nabla f, \p_t) + f(\d \hat A + \tfrac{1}{2}d\hat H)(\p_t) \\
&-\<(A + H\g)h(\nu)^{\tT}, \p_t\> + \b_{\cC}(h_A)(\p_t) - V'_h(\p_t) - \<D^2t, h\>,\\
\end{split}
\ee

\begin{lemma}
The component $h_{0\nu}$ satisfies the Sommerfeld boundary condition 
\be \label{h01}
(\tfrac{1}{2}\p_t + \p_{\nu})h_{0\nu} = \hat Y(\cT) + \hat Z(f) + \hat E(h),
\ee
at $\cC$, where 
\be \label{h01terms}
\begin{split}
&\hat Y(\cT) =    \b_{\cC}(h_A)(\p_t) - V'_h(\p_t) + \tfrac{1}{2}\p_t \eta_h, \\
&\hat Z(f) = - (\hat A - \tfrac{\hat H}{2}\g)(\nabla f, \p_t) + f(\d \hat A + \tfrac{1}{2}d\hat H)(\p_t), \\ 
&\hat E(h) =  -\<(A + H\g)h(\nu)^{\tT}, \p_t\>  + \<h(\nu)^{\tT}, \nabla_{\nu}\p_t\> - \<D^2t, h\>.
\end{split}
\ee 
The component $h_{\nu \nu}$ satisfies 
$$h_{\nu \nu} = -h_{0\nu} + \eta.$$ 
\end{lemma}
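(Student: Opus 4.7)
The second claim, $h_{\nu\nu}=-h_{0\nu}+\eta$, is immediate from the very definition \eqref{eta'} of $\eta=\eta_h$, so essentially all the work goes into deriving the Sommerfeld-type equation \eqref{h01} for $h_{0\nu}$. My starting point is the identity \eqref{gt} for the $\p_t$-component of the harmonic gauge equation, which is already organized with the $\b_{\cC}h^{\tT}$ term split via \eqref{bA} into an $h_A$-piece and an $f$-piece. The plan is to convert the left-hand side $\<\nabla_{\nu}h(\nu)^{\tT},\p_t\>$ into a $\nu$-derivative of the scalar $h_{0\nu}:=h(\nu,\p_t)$, combine it with the $\tfrac12\p_t h_{\nu\nu}$ term on the right using $h_{\nu\nu}+h_{0\nu}=\eta_h$, and then sort the resulting expression into the three types of terms $\hat Y(\cT)$, $\hat Z(f)$, $\hat E(h)$.

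First, I expand the ambient covariant derivative:
\bes
\<\nabla_{\nu}h(\nu)^{\tT},\p_t\>=\nu\bigl(h(\nu,\p_t)\bigr)-h(\nabla_{\nu}\nu,\p_t)-h(\nu,\nabla_{\nu}\p_t)
=\p_{\nu}h_{0\nu}+\<h(\nu)^{\tT},\nabla_{\nu}\p_t\>+(\text{zero-order in }h),
\ees
where I have used that $\nabla_{\nu}\nu$ is tangent to $\cC$ (so $h(\nabla_{\nu}\nu,\p_t)$ is absorbed into the $\<(A+H\g)h(\nu)^{\tT},\p_t\>$-type terms already present in \eqref{gt}), and that the normal component of $\nabla_{\nu}\p_t$ yields $\<\nabla_{\nu}\p_t,\nu\>\,h_{\nu\nu}$, which is linear in $h_{\nu\nu}$ with coefficients depending only on $g$ (and which I also absorb as a zero-order-in-$h$ contribution). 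The leftover tangential piece is exactly the term $\<h(\nu)^{\tT},\nabla_{\nu}\p_t\>$ that appears in the stated $\hat E(h)$.

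Next, using \eqref{eta'} in the form $h_{\nu\nu}=\eta_h-h_{0\nu}$, I replace $\tfrac12\p_t h_{\nu\nu}$ on the right of \eqref{gt} by $\tfrac12\p_t\eta_h-\tfrac12\p_t h_{0\nu}$. Substituting the expansion above and moving the $-\tfrac12\p_t h_{0\nu}$ term to the left produces
\bes
\bigl(\tfrac12\p_t+\p_{\nu}\bigr)h_{0\nu}=\tfrac12\p_t\eta_h+\b_{\cC}(h_A)(\p_t)-V'_h(\p_t)-(\hat A-\tfrac{\hat H}{2}\g)(\nabla f,\p_t)+f(\d\hat A+\tfrac12 d\hat H)(\p_t)+(\text{remaining terms}),
\ees
where the parenthetical remaining terms are precisely the $-\<(A+H\g)h(\nu)^{\tT},\p_t\>+\<h(\nu)^{\tT},\nabla_{\nu}\p_t\>-\<D^2t,h\>$ contributions.

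The final step is to sort. I invoke the decomposition $V'_h=V'_F+W'_h$ from \eqref{FW}, putting $V'_F(\p_t)$ into the target-data collection and $W'_h(\p_t)$ into the $h$-dependent collection. The pieces depending only on target data $(h_A,\eta_h,V'_F)$ form $\hat Y(\cT)$; the pieces linear in $f$ form $\hat Z(f)$; the pieces that are zero-order or algebraic in $h$ (including the Weingarten-type terms $\<(A+H\g)h(\nu)^{\tT},\p_t\>$ and $\<h(\nu)^{\tT},\nabla_{\nu}\p_t\>$, together with $W'_h(\p_t)$ and $\<D^2t,h\>$) form $\hat E(h)$. This matches the stated decomposition \eqref{h01terms} exactly.

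The only real subtlety I anticipate is bookkeeping the geometric identity converting $\<\nabla_{\nu}h(\nu)^{\tT},\p_t\>$ into $\p_{\nu}h_{0\nu}$ modulo acceptable lower-order terms; in particular one needs to verify that the $h_{\nu\nu}$-contribution arising from the normal part of $\nabla_{\nu}\p_t$ is indeed of the type allowed in $\hat E(h)$ (i.e., zero-order in $h$ with coefficients depending only on $g$), and that no new $\p_{\nu}$-derivative of $h_{\nu\nu}$ or $h^{\tT}$ sneaks in, which would spoil the Sommerfeld form of the left-hand side. This is a direct computation and should go through cleanly.
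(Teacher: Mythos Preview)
Your proposal is correct and follows exactly the approach the paper indicates: start from \eqref{gt}, use \eqref{eta'} to write $\tfrac12\p_t h_{\nu\nu}=\tfrac12\p_t\eta_h-\tfrac12\p_t h_{0\nu}$, convert $\<\nabla_{\nu}h(\nu)^{\tT},\p_t\>$ to $\p_{\nu}h_{0\nu}$ modulo the zero-order term $\<h(\nu)^{\tT},\nabla_{\nu}\p_t\>$, and then split $V'_h=V'_F+W'_h$ to sort the right-hand side into $\hat Y,\hat Z,\hat E$. One small slip: in your displayed expansion the sign of $\<h(\nu)^{\tT},\nabla_{\nu}\p_t\>$ should be negative (since $\p_{\nu}\langle h(\nu)^{\tT},\p_t\rangle=\<\nabla_{\nu}h(\nu)^{\tT},\p_t\>+\<h(\nu)^{\tT},\nabla_{\nu}\p_t\>$), which is exactly what produces the $+\<h(\nu)^{\tT},\nabla_{\nu}\p_t\>$ in $\hat E(h)$ after moving it to the right-hand side.
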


\begin{proof}
  This follows from \eqref{gauget} and \eqref{gt}, using \eqref{eta'} to replace $h_{\nu \nu}$ by $h_{0\nu}$ and then rearranging the terms. 
 
 \end{proof}

\begin{lemma}
The component $h(\nu)^{\Si} = h_{\nu a} \p_a$, $a = 2, \dots n$, satisfies the Neumann boundary condition 
\be \label{hnua}
\nu (h_{\nu a}) = \w Y(\cT) + \w Z(f, h_{\nu \nu}) + \w E(h)
\ee
where 
\be \label{hnuaterms}
\begin{split}
&\w Y(\cT)_a = \b_{\cC}h_A(\p_a) - (V'_h)(\p_a),\\
&\w Z(f, h_{\nu \nu})_a = -\<(\hat A -\tfrac{\hat H}{2}\g(\nabla f), \p_a\> + \tfrac{1}{2}\p_a h_{\nu \nu} + f(\d \hat A + \tfrac{1}{2}d\hat H)(\p_a), \\ 
&\w E_a(h) =  -\<(A + H\g)h(\nu)^{\tT}, \p_a\>  + \<h(\nu)^{\tT}, \nabla_{\nu}\p_a\> - \<D^2 x^a, h\>,
\end{split}
\ee 
at $\cC$. 
\end{lemma}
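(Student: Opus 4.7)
The plan is to mimic, almost verbatim, the derivation of \eqref{h01} in the preceding lemma, but starting from the gauge equation \eqref{gaugeSi} in place of \eqref{gauget}. First I would solve \eqref{gaugeSi} algebraically for the transverse (Neumann) quantity $\<\nabla_{\nu}h(\nu)^{\Si}, \p_a\>$, since this is the only term involving a normal derivative of $h(\nu, \cdot)$. Converting $\<\nabla_{\nu}h(\nu)^{\Si}, \p_a\>$ into $\nu(h_{\nu a})$ via the product rule costs exactly a connection term of the form $\<h(\nu)^{\tT}, \nabla_{\nu}\p_a\>$, which is first order in $h$ with bounded coefficients and is bookkept into $\w E(h)$.

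Next I would substitute the splitting \eqref{bA} of $\b_{\cC}h^{\tT}$, with the renormalization $\hat A = A/\l$, $\hat H = H/\l$ described in Remark \ref{renorm}. This cleanly separates the piece $\b_{\cC}h_A$, which is part of the target data, from the $f$-dependent piece $-(\hat A - \tfrac{\hat H}{2}\g)(\nabla f) + f(\d_\cC \hat A + \tfrac{1}{2}d\hat H)$ paired against $\p_a$. Simultaneously I would decompose $V'_h = V'_F + W'_h$ according to \eqref{FW}, so that $V'_F(\p_a)$ is target data and $W'_h(\p_a)$ is a bulk perturbation absorbed into $\w E$. The surviving term $\tfrac{1}{2}\p_a h_{\nu \nu}$ directly from \eqref{gaugeSi}, combined with the $f$-terms from the expansion of $\b_{\cC}h^{\tT}$, forms the contribution $\w Z(f, h_{\nu \nu})$.

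Collecting terms by their dependence (target data, $(f, h_{\nu\nu})$, or bulk $h$) then yields exactly the claimed identity \eqref{hnua} with $\w Y$, $\w Z$, $\w E$ as in \eqref{hnuaterms}. In contrast with the previous lemma for $h_{0\nu}$, there is no analog of \eqref{eta'} relating $h_{\nu a}$ to a quantity prescribed by $(h_A, \eta_h)$, and hence no way to promote the $\nu$-derivative on the left into a Sommerfeld combination $(\tfrac{1}{2}\p_t + \nu)$; this is precisely why the boundary condition for $h_{\nu a}$ remains purely Neumann (and why one later needs the half-derivative loss of Remark \ref{Neumann}). There is no real obstacle here: the proof is a routine rearrangement, and the only care required is tracking conventions for the spatial projection $h(\nu)^{\Si}$ versus the tangential projection $h(\nu)^{\tT}$ when pairing with $\p_a$, and keeping the connection terms $\nabla_{\nu}\p_a$ in the correct $\w E$ bucket.
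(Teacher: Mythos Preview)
Your proposal is correct and follows exactly the approach the paper indicates: the paper's proof is the single line ``This follows from \eqref{gaugeSi} and \eqref{bA} in the same way as above,'' and your expansion of that line (solve \eqref{gaugeSi} for the Neumann term, convert $\<\nabla_\nu h(\nu)^\Si,\p_a\>$ to $\nu(h_{\nu a})$ picking up $\<h(\nu)^{\tT},\nabla_\nu\p_a\>$, substitute \eqref{bA}, split $V'_h = V'_F + W'_h$, and sort terms) is precisely what is intended. Note incidentally that the decomposition you describe yields $-V'_F(\p_a)$ in $\w Y$ (as in the previous lemma's $\hat Y$), which is consistent with $-W'_h(\p_a)$ appearing in $\w E$; the $-V'_h(\p_a)$ printed in the statement appears to be a typo for $-V'_F(\p_a)$.
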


\begin{proof}
This follows from \eqref{gaugeSi} and \eqref{bA} in the same way as above. 

\end{proof}

\begin{remark}
{\rm We note that the normal component \eqref{gaugen} of the harmonic gauge data (which is part of the target data) is not directly used in this 
process, cf.~\eqref{bnu2} below however. 
 
 On the other hand, \eqref{gaugen} is in fact dependent on the other $n+1$ equations above. Namely, the linearized Hamiltonian 
 constraint can be derived from the harmonic gauge equations (by taking $\d_{\cC}$ of \eqref{gauget} and $\nu$ of \eqref{gaugen}). So \eqref{gaugen} is 
embedded in the equations \eqref{gauget},\eqref{gaugeSi} and \eqref{Hlin}. 
 }
  \end{remark} 
  
    For later reference, note that the $Y$ terms above depend only on the target data $\cT$, with the exception of $\d^*V'_h(\nu, \nu)$ in \eqref{Hlin}. 
The $Z$ terms have a triangular structure in the order of the components $h_A, fA, h_{0\nu}, h_{\nu, \nu}, h(\nu)^{\Si}$ used above. The $E$ terms all have 
coefficients of order $O(\l)$ or $O(\e)$ in the localization of \S 2.4, and all are of order zero in $h$ except for the gauge-dependent terms and the terms 
involving $A'_h$ in \eqref{finhom}. 
 
\section{Energy estimates.}

   In this section, we derive (apriori) energy estimates for smooth solutions of the linear system $L(h) = F$ with shifted boundary data 
$(h_A, \eta_h)$  in the local setting of \S 2.4. This is the key to proving linearized well-posedness for both shifted boundary data and 
Dirichlet boundary data in the later sections. 

\smallskip 

  Define a Sobolev $H^s$ norm on the target space data in $T_{\w \tau}(\w \cT^H)$ as follows: for $\w \tau' \in T_{\w \tau}(\w \cT^H)$,   
\be \label{tnorm}
\begin{split}
||\w \tau'||_{H^s(\w \cT^H)} = ||F||_{H^s(\w U)} & + [ ||\g'_A||_{H^{s+1}(\cC)} + ||\eta||_{H^{s}(\cC)} + ||V'_{\cC}||_{H^s(\cC)}] \\
& + [ ||\g'_S||_{H^{s+1}(S_0)} + ||\k'||_{H^{s}(S_0)} + ||\nu'||_{H^s(S_0)} + ||V'_S||_{H^s(S_0)} ].
\end{split}
\ee
 On the right in \eqref{tnorm}, the top line consists of bulk and boundary terms, while the bottom line consists of initial data terms. Note the shift up in the 
derivative index $s$ on the target compared with the discussion in \eqref{Phi} or \eqref{PhiH}. 
 
  The main result of this section is the following.

\begin{theorem} \label{Eestthm} 
  Assume Assumption $(*)$ holds. Then for the (formal) linearization $D\w \Phi_g^H$ in \eqref{wPhiH} with $C^{\infty}$ data localized on $\w U \supset U$ 
as in \S 2.4, one has the tame energy estimate 
\be \label{Eest}
||h||_{\cN^s(\w U)}  \leq C \l^{-1/2}||D\w \Phi_g(h)||_{H^s(\w \cT^H)},
\ee
where $C$ is a constant depending only on $g$. In particular, 
$$Ker D\w \Phi_g^H = 0.$$
\end{theorem} 

We point out that the proof given here shows that the factor of $\l^{-1/2}$ cannot be dropped, indicating the breakdown of the estimate at the 
Minkowski corner. The scale factor $\l$ depends on the geometry of $g$; it must be sufficiently small, as in \eqref{eps}, to allow for absorption of 
$\l$-dependent terms in the estimates below In addition to a general (smooth) bound on the Lorentz metric $g$, the constant $C$ depends on a 
bound for the non-degeneracy of the Lorentz metric $\Pi = Hg_{\cC} - A$ from Assumption $(*)$. This enters through the equation \eqref{Hlin}. 
Throughout this section we use the notation $g$, $h$ in place of $\w g$, $\w h$ from \S 2.4. 

  We also emphasize that \eqref{Eest} is not a strong or boundary stable energy estimate, as in \eqref{DirE} or \eqref{SomE}. 
 
 \begin{proof}

  Throughout the following $h$ is a symmetric bilinear form, written in adapted local coordinates $x^{\a} = (t, x^i)$ as 
$$h = h_{\a\b}dx^{\a}dx^{\b}.$$
The full variation $h$ is decomposed into a collection of terms as follows. On the boundary $\cC$, $h^{\tT}$ has the form 
$$h^{\tT} = h_{ij}dx^i dx^j,$$
where $i, j = 0, 2, \dots, n$. Recall that $h^{\tT} = h_A + f\hat A$ on $\cC$. Then $h^{\tT}$ is extended into the bulk $\w U$ to have the same form, 
i.e.~without any $dx^1\cdot dx^{\a}$ term. Similarly, $h(\nu, \cdot)$ is defined on $\w U$ by 
\be \label{hnudef}
h(\nu, \cdot) = \frac{1}{|\nabla x^1|}g^{1 \a}h(\p_{\a}, \cdot),
\ee
which is defined by the background metric $g$ on $\w U$. 

\begin{remark} \label{fAext} 
{\rm 
Note that consequently $fA$ or $f\hat A$ is {\it not} extended into the bulk as $\d^*(f\nu)$, for some extension of $f\nu$ into the bulk. 
}
\end{remark}

    In the following, we will work with separate equations for each of the terms 
\be \label{comp}
h^{\tT}, \ h_{0\nu}, \ h_{\nu \nu}, \ h(\nu)^{\Si},
\ee
comprising $h$. The equations for $f$, $h(\nu, \cdot)$ along the boundary $\cC$ are those described in \S 3. 
The bulk equation \eqref{L} then becomes the following collection of equations: 
\be \label{e1}
-\tfrac{1}{2}\Box_g h^{\tT}  + P^{\tT}(\p h) = F^{\tT},
\ee
\be \label{e3}
-\tfrac{1}{2}\Box_g h_{0\nu}  + P_{0\nu}(\p h) = F_{0\nu},
\ee
\be \label{e4}
-\tfrac{1}{2}\Box_g h_{\nu \nu}  + P_{\nu \nu}(\p h) = F_{\nu \nu},
\ee
\be \label{e5}
-\tfrac{1}{2}\Box_g h_{a \nu} + P_{a\nu}(\p h) = F_{a\nu}.
\ee

  By Lemma \ref{DirSomm}, solutions of these equations satisfy the boundary stable energy estimate \eqref{DirE} for Dirichlet boundary data, or the 
boundary stable energy estimate \eqref{SomE} for Sommerfeld boundary data. For notational simplicity, we assume in the following that the target initial 
data $(h_S, K'_h, \nu'_S, V'_h|_S)$ as well as the bulk target data $F$, are all zero. By linearity, without any significant changes to the 
argument, the relevant terms will then be added back at the end. 

  With $F$ thus set to zero in \eqref{e1}-\eqref{e5}, the coupling term $P(\p h)$ is moved to the right side of each equation and 
treated as an inhomogeneous bulk term. Recall also from \S 2.4 that $g$ and all target data are $C^{\infty}$ and the target data 
have compact support in $\w U$, away from $\cC \cap U$. 
  
The first equation \eqref{e1} has given Dirichlet boundary data $h_A$. To determine the Dirichlet boundary data $f$, recall that $f$ solves the 
wave equation \eqref{Hlin} with initial data determined by the target data $\w \tau'$ and so is uniquely determined by $\w \tau'$. The standard energy 
estimate then gives 
\be \label{eef}
||f||_{\w H^s(\Si_t)} \leq C[||h_A||_{H^{s+1}({\cC_t)}} + ||V'_h||_{\bar H^s(\cC_t)} + \l ||h||_{\bar H^s(\cC_t)}]. \footnote{Bounds on the initial data $||f||_{H^s(\Si_0)}$, 
$||\p_t f||_{H^{s-1}(\Si_0)}$ follow from bounds on the target data $\w \tau'$.}
\ee 
Here $\w H^s(\Si_t)$ denotes the Sobolev norm with both $t$-derivatives as well as derivatives tangent to $\Si_t$, but no derivatives normal to $\cC$. 
The extra derivative required on $h_A$ comes from the $(R_{\cC})'_{h_A}$ term in \eqref{finhom}. Except for terms involving $\d^* V'_h$, the right-side 
of \eqref{Hlin}, i.e.~the inhomogeneous term, is of the form $\l(\p h)$, giving rise to the $\l$ term in \eqref{eef}. Note all derivatives, including normal 
derivatives along $\cC$ are included in the last term in \eqref{eef}. 

Throughout the following, $t \in I = [0,1]$.  The constant $C$ then depends only on the background metric $g$; recall we are working in the localized 
setting and the rescaled metric $\w g$ is denoted as $g$ from now on. Note that the $H^s$ norms on the right are monotone in $t$ and so have maximal 
value at $t = 1$; this is not necessarily the case for the $\bar \cH^s$ norm on the left. We set $\cC = \cC_1$ to simplify the notation. 
Throughout the following, $C$ is a constant which may change from line to line. 

   In addition one has the bound 
\be \label{VbarV}
||V'_h||_{\bar H^s(\cC_t)} \leq C ||V'_{\cC}||_{H^s(\cC_t)} + \l^2||h||_{H^s(\w U)}.
\ee
This follows from the boundary stable energy estimate with Dirichlet boundary data \eqref{DirE} for the gauge wave equation \eqref{V'eqn}. 
We note here that this estimate requires an extra derivative on the bulk $F$ term, due to the $\b F$ term in \eqref{V'eqn}.\footnote{As in the previous 
footnote, \eqref{VbarV} requires a bound on $||\p_t V'_h||_{H^{s-1}(S_0)}$; by the constraint equations \eqref{Const}-\eqref{TH}, this is ensured by 
an extra derivative bound on the initial data $(h_S, K'_h)$ along $S$. Together with previous remarks, this  accounts for the extra derivative on the 
initial data and bulk data assumed in \eqref{tnorm}.}

Given this control on the Dirichlet boundary value of $f$, \eqref{e1} then gives the estimate 
\be \label{ee2}
\begin{split}
||h^{\tT}||_{\bar \cH^s(S_t)} & \leq  C[||h^{\tT}||_{H^s(\cC_t)} + \l ||h||_{H^s(\w U_t)} \\
& \leq C[\int_0^t [ ||h_A||_{H^{s+1}({\cC_{t'})}} +||V'_{\cC}||_{H^s(\cC_{t'})}.+ \l ||h||_{\bar H^s(\cC_{t'})} ] +  \l ||h||_{H^s(\w U_t)}], \\
\end{split}
\ee
where the last inequality comes from integration of \eqref{eef}-\eqref{VbarV} over $t' \in [0,t]$. 
Taking the maximum over $t \in I$ then gives 
\be \label{ee2.1}
\max_{t\in [0,1]} ||h^{\tT}||_{\bar \cH^s(S_t)} \leq C[ ||h_A||_{H^{s+1}(\cC)} + ||V'_{\cC}||_{H^s(\cC_t)} +  \l ||h||_{\hat H^s(\w U)}],
\ee
where $||h||_{\hat H^s(\w U)} :=  ||h||_{H^s(\w U)} + |h||_{\bar H^s(\cC)}$. 

Next, the Sommerfeld energy estimate for $h_{0\nu}$ gives 
$$||h_{0\nu}||_{\bar \cH^s(S_t)} \leq C[||b||_{H^{s-1}({\cC_t)}} + \l ||h||_{H^s(\w U_t)} ],$$
where $b$ consists of the terms $\hat Y, \hat Z, \hat E$ on the right side of \eqref{h01} in \eqref{h01terms}. Upon inspection, this gives 
$$||h_{0\nu}||_{\bar \cH^s(S_t)} \leq C[ ||h_A||_{H^s(\cC_t)} +  ||\eta||_{H^{s}({\cC_t)}} + ||f||_{H^{s}({\cC_t)}} + ||V'_{\cC}||_{H^{s-1}(\cC_t)} + 
 \l ||h(\nu)^{\tT}||_{H^{s-1}(\cC)} + \l ||h||_{H^s(\w U_t)}].$$
Via \eqref{ee2.1}, and using the Sobolev trace theorem \eqref{Sob1} to combine the two $\e$-terms, this in turn gives 
\be \label{ee3}
\max_{t\in [0,1]} ||h_{0\nu}||_{\bar \cH^s(S_t)} \leq C[ ||h_A||_{H^{s+1}(\cC)} +  ||\eta||_{H^{s}({\cC)}} + ||V'_{\cC}||_{H^{s}(\cC)} +  \l ||h||_{\hat H^s(\w U)}],
\ee
Similarly, using the definition of $\eta$ and the Dirichlet boundary data for $h_{\nu \nu}$ gives 
$$\max_{t\in [0,1]}||h_{\nu \nu}||_{\bar \cH^s(S_t)} \leq C||h_{\nu \nu}||_{H^s({\cC)}} + ||V'_{\cC}||_{H^{s-1}(\cC)} + \l ||h||_{H^s(\w U)}, $$
and so by \eqref{ee3},
\be \label{ee4}
\max_{t\in [0,1]} ||h_{\nu \nu}||_{\bar \cH^s(S_t)} \leq C[ ||h_A||_{H^{s+1}(\cC)} + ||\eta||_{H^{s}(\cC)} + ||V'_{\cC}||_{H^{s}(\cC)} + \l  ||h||_{\hat H^s(\w U)}],
\ee
Finally for $h_{\nu a}$, from the Neumann boundary estimate \eqref{NeuE}, we have 
$$||h_{\nu a}||_{\bar H^s(S_t)} \leq C[||\p_{\nu}h_{\nu a}||_{H^{s-1/2}({\cC_t)}} + \l ||h||_{H^{s}(\w U_t)}]. $$ 
Examination of the form of the Neumann boundary data in \eqref{hnua} and \eqref{hnuaterms} gives 
\be \label{ee5}
\begin{split}
||h_{\nu a}||_{\bar \cH^s(S_t)} & \leq C[ ||f||_{H^{s+1/2}(\cC_t)} + ||h_{\nu \nu}||_{H^{s+1/2}(\cC_t)} + ||h_A||_{H^{s+1/2}(\cC_t)} \\
& + ||V'_{\cC}||_{H^{s-1/2}(\cC)} + \l(||h||_{H^{s-1/2}(\cC_t)} + ||h||_{H^{s}(\w U_t)})].
\end{split}
\ee 

 The only problematic estimate is the $h_{\nu a}$ estimate \eqref{ee5}, which loses a half-derivative through the Neumann boundary data. The coupled 
system \eqref{eef}-\eqref{ee5} does not close in $H^s$. Thus if the first two $H^{s+1/2}$ norms on the right of \eqref{ee5} were only $H^s$ norms, these 
two terms could be controlled as in \eqref{ee2.1} - \eqref{ee4}. By choosing $\l$ sufficiently small, the $\l ||h||_{H^s(\w U)}$ terms on the right could then 
be absorbed into the left side of the sum of \eqref{eef}-\eqref{ee5}, giving the required energy estimate in terms of target data. However, the 
loss-of-derivative  in \eqref{ee5} makes this impossible. An alternate path is thus necessary.

  \medskip 
 
   In place of using the Neumann boundary data estimate which loses a half-derivative, we proceed instead as follows. 
Consider the vector wave equation acting on the vector field $h(\nu)^{\Si}$ tangent to the corners $\Si_t$. Thus the equation \eqref{L} that $L(h) = F$ 
applied to the vector field $h(\nu)^{\Si}$ can be written as  
\be \label{Lnu}
\tfrac{1}{2}D^*D h(\nu)^{\Si} + P_{\Si}(\p h) = F(\nu)^{\Si}.
\ee
We now work directly with the vector equation \eqref{Lnu} in place of \eqref{e5}. As remarked above, we continue to assume $F = 0$ and 
all initial data vanish. Pairing \eqref{Lnu} with $\nabla_{\p_t} h(\nu)^{\Si}$ and integrating by parts, we obtain  
\be \label{ea}
\tfrac{1}{4}\frac{d}{dt}\int_{S_t}|\nabla_{\p_t} h(\nu)^{\Si}|^2 + |\nabla_S h(\nu)^{\Si}|^2 \leq \int_{\Si_t}\<\nabla_{\p_t} h(\nu)^{\Si}, \nabla_{\nu}h(\nu)^{\Si}\> + 
\l \int_{S_t}|\nabla_{\p_t}h(\nu)^{\Si}||\p h|,
\ee
where $\nabla_S$ denotes the spatial derivatives tangent to $S_t$. Here and throughout the following, we will ignore the $t$-derivatives of the volume 
form (omitted from the notation) and the metric; these are lower order terms which are easily handled by the methods below. 

  Now all the norms and inner products in \eqref{ea} (except the last) are with respect to the Lorentz metric $g$, and so are not positive definite. In other words, 
the estimate \eqref{ea} is not the same as a standard energy estimate that one would have for scalar functions. On the left side of \eqref{ea}, the 
negative part of the norm is the $T$ component, i.e.~the term $-(\<\nabla_{\p_t} h(\nu)^{\Si}, T\>)^2$ where $T$ is the unit normal to $S_t$, 
and the same for the $\nabla_S$ term. However, 
$$\<\nabla_{\p_t} h(\nu)^{\Si}, T\>  = -\<h(\nu)^{\Si}, \nabla_{\p_t} T\>.$$
These are lower order $\l$-terms, again easily handled by the arguments to follow, and so will be ignored. Thus the norms on the left in \eqref{ea} are 
Riemannian, modulo lower order terms. The inner product on the right of \eqref{ea} is however still Lorentzian; this will be of importance later. 

 Integrating from $0$ to $t$ and using $ab \leq a^2 + b^2$, we obtain from \eqref{ea} 
\be \label{en1}
\begin{split}
\tfrac{1}{4}\int_{S_t}||\nabla_{\p_t} h(\nu)^{\Si}||^2  +  ||\nabla_S h(\nu)^{\Si}||^2 & \leq \int_{\cC_t}\<\nabla_{\p_t} h(\nu)^{\Si}, \nabla_{\nu}h(\nu)^{\Si}\> 
+ \int_{S_0} ||\nabla_{\p_t} h(\nu)^{\Si}||^2 + ||\nabla_S h(\nu)^{\Si}||^2 \\
& + \int_{0}^t \int_{S_t}||\nabla_{\p_t} h(\nu)^{\Si}||^2 + ||\nabla_S h(\nu)^{\Si}||^2 + \l^2 \int_{\w U_t} |\p h|^2 ,
\end{split}
\ee
where the norm $|| \cdot ||$ is the Riemannian norm $g_{S_t}$. Let 
\be \label{edef}
E_t^{\nu} = ||h(\nu)^{\Si}||_{\bar H^1(S_t)}^2 = \int_{S_t}||\nabla_{\p_t} h(\nu)^{\Si}||^2 + ||\nabla_S h(\nu)^{\Si}||^2 + ||h(\nu)^{\Si}||^2,
\ee
be the $\bar H^1$ norm of $h(\nu)^{\Si}$. Since $E_0^{\nu} = 0$ (since the initial data are assumed to vanish), it is easy to see that 
$$\max_{t\in [0,1]}E_t^{\nu} \leq 2 \max_{t\in [0,1]} \int_{S_t}||\nabla_{\p_t} h(\nu)^{\Si}||^2 + ||\nabla_S h(\nu)^{\Si}||^2.$$ 
It then follows easily from \eqref{en1} and the Gronwall inequality that there is a numerical constant $c > 0$ such that 
\be \label{1.5}
E_t^{\nu} \leq  c \int_{\cC_t}\<\nabla_{\p_t} h(\nu)^{\Si}, \nabla_{\nu}h(\nu)^{\Si}\> + \l^2 \int_{\w U_t} |\p h|^2 .
\ee

   The main estimate for $h(\nu)^{\Si}$ (replacing the weaker Neumann estimate above) is then the following: 
  
 \begin{proposition} \label{mainprop}
 There exist positive constants $c, C > 0$, depending only on $g$, such that 
$$\max_{t\in [0,1]}E_t^{\nu} + c\l^{-1}||f||_{H^1(\cC)}^2 \leq C\l^{-1}[ ||h_A||_{H^{s+1}(\cC)}^2 + ||\eta||_{H^{s}(\cC)}^2 + ||V'_{\cC}||_{H^{s}(\cC)}^2 ] + \l^2  ||h||_{H^s(\w U)}^2.$$
\end{proposition}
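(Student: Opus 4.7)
The plan is to combine two energy identities: (i) a Lorentzian $H^1(\cC)$ energy bound for the wave equation \eqref{Hlin} governing $f$, weighted by $\l^{-1}$, and (ii) the vector-valued estimate \eqref{1.5} for $h(\nu)^{\Si}$ with its boundary integral rewritten via the Neumann identity \eqref{hnua}. These two estimates are coupled through the critical $\nabla f$ term in $\w Z$ of \eqref{hnuaterms} and through the $A'_h, H'_h$ terms in $E(h)$ of \eqref{finhom}; the coupling is closed by Cauchy--Schwarz with weights chosen to exploit the smallness of $\l$.

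First I would derive the $H^1(\cC)$ energy estimate for $f$. Under Assumption $(*)$, the operator $\w\Box_{\cC} = \<D^2\cdot,\,\hat\Pi\>$ is Lorentzian with $\Si_t$ spacelike by \eqref{pt}. By Lemma \ref{fft} and the convention in force that $F$ and all initial target data vanish, we have $f|_{\Si_0} = \p_t f|_{\Si_0} = 0$. Pairing \eqref{Hlin} with $\p_t f$, integrating over $\cC_t$, and applying Gronwall yields
\bes
\|f\|_{H^1(\cC)}^2 \leq C\|Y(\cT)\|_{L^2(\cC)}^2 + C\int_{\cC}|E(h)|^2.
\ees
Inspection of $Y(\cT)$ in \eqref{finhom} bounds the first term by the target data: the dominant contribution $R'_{h_A}$ involves two derivatives of $h_A$, while $V'_F$ is controlled via the Dirichlet estimate \eqref{DirE} applied to \eqref{V'F}. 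The error $E(h)$ has coefficients of order $O(\l)$ in its geometric pieces and otherwise contains the gauge piece $W'_h$ controlled by \eqref{VbarV}, so $\int_{\cC}|E(h)|^2 \leq C\l^2 \|h\|_{H^s(\w U)}^2$. Multiplying through by $\l^{-1}$ produces the $\l^{-1}\|f\|_{H^1(\cC)}^2$ term on the left with target-data control on the right, together with a residue $C\l\|h\|_{H^s(\w U)}^2$ absorbable into $\l^2\|h\|_{H^s(\w U)}^2$ for $\l$ small.

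Next, I substitute \eqref{hnua} into the boundary integral on the right of \eqref{1.5}. The $\w Y(\cT)$ contribution, by Cauchy--Schwarz, is bounded by $\delta E_t^{\nu}$ plus $C\delta^{-1}$ times a target-data norm. The $\w E(h)$ contribution has coefficients $O(\l)$ and yields $\delta E_t^{\nu} + C\l^2 \|h\|_{H^s(\w U)}^2$. Within $\w Z$, the purely algebraic $f$-term and the $\p_a h_{\nu\nu}$ term (the latter treated by integration by parts in $\p_a$ along $\Si_t$) are controlled respectively by $\|f\|_{L^2(\cC)}^2$ and by the previously derived Sommerfeld bounds \eqref{ee3}--\eqref{ee4} for $h_{\nu\nu}$. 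The critical piece is
\bes
I_f = -\int_{\cC_t}\big\<\nabla_{\p_t}h(\nu)^{\Si},\;(\hat A - \tfrac{\hat H}{2}\g)(\nabla f)\big\>,
\ees
to which Cauchy--Schwarz gives $|I_f| \leq \delta E_t^{\nu} + C\delta^{-1}\|f\|_{H^1(\cC)}^2$.

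Finally, the two estimates are added. All $\delta E_t^{\nu}$ residues are absorbed into $E_t^{\nu}$ on the left. The term $C\delta^{-1}\|f\|_{H^1(\cC)}^2$ from $I_f$ is absorbed into $c\l^{-1}\|f\|_{H^1(\cC)}^2$ on the left provided $C\delta^{-1}\leq \tfrac{c}{2}\l^{-1}$, i.e.\ $\delta \gtrsim \l$. Choosing $\delta$ of order $\l$ makes both absorption requirements compatible once $\l$ is sufficiently small, as guaranteed by the localization of \S 2.4. The main obstacle is precisely this balancing act: the Cauchy--Schwarz residue from the $\nabla f$ coupling forces the weight $\l^{-1}$ on $\|f\|_{H^1(\cC)}^2$ to appear on both sides of the final inequality, recording the degeneration of the estimate as $\l\to 0$ and consistent with the fact that Assumption $(*)$ fails at the limiting Minkowski corner.
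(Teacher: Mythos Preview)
Your strategy of deriving an independent $H^1(\cC)$ estimate for $f$ and then coupling it to the $h(\nu)^{\Si}$ estimate via Cauchy--Schwarz has a genuine gap at the critical step. When you apply Cauchy--Schwarz directly to
\[
I_f = -\int_{\cC_t}\big\langle\nabla_{\p_t}h(\nu)^{\Si},\,(\hat A - \tfrac{\hat H}{2}\g)(\nabla f)\big\rangle,
\]
the term you actually produce is $\delta\int_{\cC_t}|\nabla_{\p_t}h(\nu)^{\Si}|^2$, \emph{not} $\delta E_t^{\nu}$. By \eqref{edef}, $E_t^{\nu} = \|h(\nu)^{\Si}\|_{\bar H^1(S_t)}^2$ is an energy on the spacelike slice $S_t$, whereas $\int_{\cC_t}|\nabla_{\p_t}h(\nu)^{\Si}|^2$ lives on the timelike boundary $\cC_t$; there is no boundary-stable estimate for $h(\nu)^{\Si}$ available to bound the latter by the former---this is precisely the Neumann obstruction of Remark~\ref{Neumann}. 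If instead you integrate by parts in $t$ first (to land on $h(\nu)^{\Si}$ rather than its $\p_t$-derivative), the other factor becomes $\nabla\dot f$, and Cauchy--Schwarz then demands $H^2(\cC)$ control of $f$, one derivative more than your step (i) supplies. Either way the loop does not close. (A separate minor slip: the residue $C\l\|h\|^2$ cannot be ``absorbed into $\l^2\|h\|^2$ for $\l$ small''---the inequality runs the wrong way.)

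The paper closes the estimate not by Cauchy--Schwarz on $I_f$ but by a structural identity. After integrating $I_f$ by parts in $t$ and then applying the divergence theorem along $\Si$, the leading piece becomes $\int_{\cC_t}\langle\d^*h(\nu)^{\tT},\dot f\,\hat\Pi\rangle$. The variation formula \eqref{A'} converts this to $-\int_{\cC_t}\dot f\,\langle A'_h,\hat\Pi\rangle$ modulo controlled terms, and the Hamiltonian constraint \eqref{Hlin} identifies $2\langle A'_h,\Pi\rangle$ with $\w\Box_{\cC}f$ up to lower order, so that $\langle A'_h,\hat\Pi\rangle = \tfrac12\l^{-1}\w\Box_{\cC}f + \l^{-1}R$. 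The resulting integral $\tfrac12\l^{-1}\int_{\cC_t}\dot f\,\w\Box_{\cC}f$ is exactly the (negative) energy flux for the $\hat\Pi$-wave equation on $f$, producing $-c\l^{-1}\|f\|_{\w H^1(\Si_t)}^2$ on the right of \eqref{1.5} with the \emph{correct sign}. Thus the $+c\l^{-1}\|f\|_{H^1(\cC)}^2$ on the left of the Proposition is not an independent estimate scaled by $\l^{-1}$; it emerges from the sign of the coupling itself. This structural cancellation---linking $\d^*h(\nu)$ to $A'_h$ and then back to $\w\Box_{\cC}f$---is the missing ingredient in your argument.
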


   The proof is rather long and given through several Lemmas, some of which are proved in the Appendix. It will be important to keep track of the signs 
of the dominant terms, i.e.~ those involving derivatives of $h(\nu)^{\Si}$, for the final arguments. The proof of Theorem \ref{Eestthm} then proceeds with 
Proposition \ref{prophnua}. 

\smallskip

     To begin the proof of Proposition \ref{mainprop}, from \eqref{gaugeSi} and \eqref{bA}, we have (viewing $\nabla_{\nu}h(\nu)^{\Si}$ as a 1-form on $\Si$), 
\be \label{hnu2'}
\nabla_{\nu}h(\nu)^{\Si} = -[(\hat A - \hat H \g)(\nabla f)]^{\Si} + \tfrac{1}{2}(d_{\Si}h_{\nu \nu} - \hat H d_{\Si}f) + L 
\ee
where
\be \label{Lower}
L = (\b_{\cC}h_A)^{\Si} + f(\d \hat A + \tfrac{1}{2}d\hat H)^{\Si} - [(A+H\g)h(\nu)^{\tT}]^{\Si} - \<D^2 x^a, h\>dx^a - (V'_h)^{\Si}. 
\ee
Here $L$ consists of terms involving target data as well as lower order terms in $h$. 
Substituting \eqref{hnu2'} into the first term on the right in \eqref{1.5} gives 
\be \label{3}
\int_{\cC_t}\<\nabla_{\p_t} h(\nu)^{\Si}, \nabla_{\nu}h(\nu)^{\Si}\> = \int_{\cC_t}\<\nabla_T h(\nu)^{\Si}, (\hat \Pi(\nabla f))^{\Si}  + 
\tfrac{1}{2}(d_{\Si}h_{\nu \nu} - \hat H d_{\Si}f) + L\> .
\ee
  
  \medskip 

For notation, let  
\be \label{Lambda}
\Lambda = ||g||_{C^3(\cC)}.
\ee
Recall from the localization in \S 2.4 that $\Lambda = O(1)$ while $|\p^k g| = O(\l^k)$. Also let 
\be \label{J} 
B_{\cT} = [||h_A||_{H^{2}(\cC)}^2 + ||\eta_h||_{H^1(\cC)}^2 + ||V'_{\cC}||_{H^1(\cC)}^2 , 
\ee
denote the bound on the target data in $T(\cT^H)$ and  
\be \label{Jeps} 
B_{\l} = B_{\cT} + \l^2 ||h||_{\hat H^1(\w U)}^2,
\ee
where the $\hat H^1(\w U)$ norm is defined following \eqref{ee2.1}. For later use, we recall from \eqref{eef} that 
\be \label{fJ}
||f||_{H^1(\Si_t)}^2 \leq C B_{\l}.
\ee

\medskip 

  First, the lower order $L$ terms are easier to handle and so discussed in Lemmas \ref{Xlemma}-\ref{Blemma} given in the Appendix. These give 
\be \label{Lower1}
\int_{\cC_t}\<\nabla_{\p_t} h(\nu)^{\Si}, L\> \leq \d \max_{t \in [0,1]}E_t ^{\nu}+ \d^{-1}[ \l \Lambda B_{\l} + B_{\cT}], 
\ee
where $\d > 0$ is a small positive parameter. 

   We focus here on the leading order terms in \eqref{3}, beginning with the term $(d_{\Si}h_{\nu \nu} - \hat H d_{\Si}f)$. 

\begin{lemma}\label{divlemma}
For any smooth functions $\f$ and $q$, 
\be \label{fq}
| \int_{\cC_t} \<\nabla_{\p_t} h(\nu)^{\Si}, qd_{\Si}\f\> | \leq \d |q|_{C^0}\max_{t\in [0,1]}E_t^{\nu} + \d^{-1}\max_{t\in [0,1]}||q||_{C^2(\Si_t)}^2 ||\f||_{H^1(\cC)}^2 + 
\d\int_{\cC}(\d_{\Si}h(\nu)^{\Si})^2.
\ee
Moreover, 
\be \label{div}
\int_{\cC}(\d_{\Si}h(\nu)^{\Si})^2  \leq C B_{\l}.
\ee 
Hence, 
\be \label{fq1}
| \int_{\cC_t} \<\nabla_{\p_t} h(\nu)^{\Si}, d_{\Si}h_{\nu \nu} - \hat H d_{\Si}f\> | \leq \d \max_{t\in [0,1]}E_t^{\nu} + \d^{-1} C B_{\l}.
\ee
\end{lemma}

\begin{proof} 
Integrating by parts in the $t$-direction gives 
$$\int_{\cC_t}\<\nabla_{\p_t} h(\nu)^{\Si}, q d_{\Si}\f \> = \frac{d}{dt}\int_{\cC_t}\<h(\nu)^{\Si}, q d_{\Si}\f\> - \int_{\cC_t}\<h(\nu)^{\Si}, q d_{\Si}\dot \f+ \dot q d_{\Si}\f\>,$$
where here (and in the following) we ignore ignore lower order terms involving the $t$-derivative of the (background) metric and volume form as well as 
terms resulting from commuting derivatives. Applying the divergence theorem then gives 
\be \label{5}
\int_{\cC_t}\<\nabla_{\p_t} h(\nu)^{\Si}, qd\f \> = \int_{\Si_t}\<h(\nu)^{\Si}, qd^{\Si}\f\> - \int_{\cC_t}\dot \f \d_{\Si}(q h(\nu)^{\Si}) + \f \d_{\Si}(\dot q h(\nu)^{\Si}). 
\ee 
For the first term on the right in \eqref{5}, write 
$$|\int_{\Si_t}\<h(\nu)^{\Si}, qd^{\Si}\f\>| \leq |q|_{C^0}||h(\nu)^{\Si}||_{H^{1/2}(\Si_t)}||\f||_{H^{1/2}(\Si_t)} \leq 
 |q|_{C^0}(\d E_{\nu}^t + \d^{-1} ||\f||_{H^{1}(\cC)}^2),$$  
where we have used the Cauchy-Schwarz type inequality \eqref{Sob3} together with \eqref{Sob2}. 

   Next, $\d_{\Si}(q h(\nu)^{\Si}) = q \d_{\Si}h(\nu)^{\Si} - \<h(\nu)^{\Si}, d q\>$ and $\d_{\Si}(\dot q h(\nu)^{\Si}) = \dot q \d_{\Si} h(\nu)^{\Si} - 
\<h(\nu)^{\Si}, d \dot q\>$. Then 
$$\int_{\cC_t}\dot \f q \d_{\Si}h(\nu)^{\Si} \leq \d \int_{\cC_t}(\d_{\Si}h(\nu)^{\Si})^2 + \d^{-1} \int_{\cC_t} \dot \f^2 q^2,$$
and
$$\int_{\cC_t}\dot \f \<h(\nu)^{\Si}, dq\> \leq \d \int_{\cC_t}|h(\nu)^{\Si}|^2 + \d^{-1} \int_{\cC_t} \dot \f^2 |dq|^2.$$
This gives 
$$| \int_{\cC_t}\dot \f \d_{\Si}(q h(\nu)^{\Si}) | \leq \d \int_{\cC_t}(\d_{\Si}h(\nu)^{\Si})^2 + \d \int_{\cC_t}|h(\nu)^{\Si}|^2 + 
\d^{-1} ||q||_{C^1(\cC_t)}^2 \int_{\cC_t} \dot \f^2.$$
The second term $\d_{\Si}(\dot q h(\nu)^{\Si})$ is treated in the same way. Combining the estimates above then gives \eqref{fq}. 

  To prove \eqref{div}, we use the normal component of the harmonic gauge equation \eqref{gaugen}, i.e.  
\be \label{bnu2}
(V'_h)(\nu) = -\tfrac{1}{2}\nu(h_{\nu \nu}) + \tfrac{1}{2}\nu(\tr_{\cC}h^{\tT}) + \d_{\cC}(h(\nu)^{\tT}) - h_{\nu \nu}H + \<A, h^{\tT}\> - \<D^2 x^1, h\>.
\ee
On the right side of \eqref{bnu2}, the last three terms are of zero order in $h$, while the first three terms are first order in $h$. We derive the  
estimates for the three first order terms below; the estimates for the three zero order terms then follows easily. 

  Observe that \eqref{bnu2} implies that  
$$||\d_{\cC}h(\nu)^{\tT}||_{L^2(\cC_t)}^2 \leq C[ ||h_{\nu \nu}||_{\bar H^1(\cC_t)}^2 + ||\tr_{\cC}h^{\tT}||_{\bar H^1(\cC_t)}^2 + ||V'_{\cC}||_{L^2(\cC)}^2],$$
where we have ignored the zero order terms in \eqref{bnu2}. By the boundary stable estimates 
\eqref{ee4}, \eqref{ee1} and \eqref{ee2.1}, this gives 
\be \label{dc}
\max_{t\in [0,1]} ||\d_{\cC}h(\nu)^{\tT}||_{L^2(\cC_t)}^2 \leq C[ ||h_A||_{H^2(\cC)}^2 + ||\eta||_{H^1(\cC)}^2 + ||V'_{\cC}||_{H^1(\cC)}^2 + 
\l^2 ||h||_{\hat H^1(\w U)}^2 ].
\ee

  Next, modulo lower order $\l$ terms, we have 
$$\d_{\cC}h(\nu)^{\tT} = \p_t h_{0\nu} + \d_{\Si}h(\nu)^{\Si},$$
and using the boundary stable Sommerfeld estimate \eqref{SomE} for $h_{0\nu}$ gives 
$$\int_{\cC_t}|\p h_{0\nu}|^2 \leq C [ ||h_A||_{H^2(\cC_t)}^2 + ||\eta||_{H^1(\cC_t)}^2 + \l^2 ||h||_{H^1(\w U_t)}^2.$$
Combining this with \eqref{dc} gives \eqref{div}. 

  Finally to prove \eqref{fq1}, set $q = \hat H$ and $\f = f$ and use the estimate \eqref{eef}; also set $q = 1$ and $\f = h_{\nu \nu}$ and 
use the boundary stable energy estimate \eqref{ee4}. Combining the estimates above completes the proof. 

\end{proof}

Combining the results above with \eqref{3} gives the bound 
\be \label{alp}
\int_{\cC_t}\<\nabla_{\p_t} h(\nu)^{\Si}, \nabla_{\nu}h(\nu)^{\Si}\>   \leq K + \int_{\cC_t}\<\nabla_{\p_t} h(\nu)^{\Si}, (\hat \Pi(\nabla f))^{\Si} \>, 
\ee
where 
\be \label{K}
K = \d \Lambda \max_{t\in [0,1]}E_t^{\nu} + \d^{-1} CB_{\l}.
\ee
  Finally, we estimate the last remaining term in \eqref{alp}. This is the most significant estimate. 
 
\begin{lemma}\label{mainlemma}
There are fixed numerical constants $c_1, c_2 > 0$, independent of $h$ and $g$, such that 
\be \label{Pi1}
 \int_{\cC_t}\<\nabla_{\p_t} h(\nu)^{\Si}, (\hat \Pi(\nabla f))^{\Si} \>  \leq c_1 K + c_2\l^{-1}\d^{-1} B_{\cT} - c_2 \l^{-1}||f||_{\w H^1(\Si_t)}^2 . 
\ee
\end{lemma} 

\begin{proof}
Let 
\be \label{D}
D =  \int_{\cC_t}\<\nabla_{\p_t} h(\nu)^{\Si}, (\hat \Pi(\nabla f))^{\Si} \> .
\ee
As noted above, it will be important to pay attention to the signs related to the dominant term $D$. 
As in the previous Lemma, integrating by parts in the $t$-direction and ignoring lower order terms gives 
\be \label{D2}
 D = \int_{\Si_t}\< h(\nu)^{\Si}, (\hat \Pi(\nabla f))^{\Si} \> - \int_{\cC_t}\<h(\nu)^{\Si}, (\nabla_{\p_t} \hat \Pi)(\nabla f)\> - \int_{\cC_t}\<h(\nu)^{\Si}, [\hat \Pi(\nabla \dot f)]^{\Si} \>.
\ee
The first two terms on the right in \eqref{D2} are bounded by $cK$, by using the Sobolev trace inequality \eqref{Sob2} and \eqref{fJ}. Thus we have 
\be \label{Pi2}
D \leq -\int_{\cC_t}\<h(\nu)^{\Si}, [\hat \Pi(\nabla \dot f)]^{\Si} \> + cK. 
\ee

  Next, note that $\hat \Pi(\nabla \dot f) = \hat \Pi(\nabla_{\cC}\dot f) = -\d_{\cC}(\dot f \hat \Pi)  + \dot f \d(\hat \Pi) = -\d_{\cC}(\dot f \hat \Pi) - 
\dot f \hat \Ric(\nu, \cdot)$, using \eqref{GC}. 
The last term here is bounded by $K$ in the same way as above. The remaining issue is then to bound the term 
$\int_{\cC_t}\<h(\nu)^{\Si}, \d_{\cC}(\dot f \hat \Pi) \>$. By the divergence theorem,  
$$\int_{\cC_t}\<h(\nu)^{\Si}, \d_{\cC}(\dot f \hat \Pi) \> =  \int_{\cC_t}\<\d^*h(\nu)^{\Si}), \dot f \hat \Pi\> + \int_{\Si_t} \dot f \hat \Pi(T, h(\nu)^{\Si}),$$
(using the fact that the initial data on $\Si_0$ vanish). The last term here is again bounded by $K$ by the previous 
arguments, so that 
\be \label{hatPi}
D \leq \int_{\cC_t}\<\d^*(h(\nu)^{\tT}), \dot f \hat \Pi\> + cK. 
 \ee

   To handle this term, we use the standard formula for variation $A'_h$ of the second fundamental form: $A = \frac{1}{2}\cL_{\nu}g$, so 
that $A'_h = \frac{1}{2} \cL_{\nu}h + \cL_{\nu'_h}g$. A standard further computation gives the formula 
\be \label{A'}
2A'_h = \nabla_{\nu}h^{\tT} + 2A\circ h^{\tT} - 2 \d^* h(\nu)^{\tT} - h_{\nu \nu}A,
\ee
where the middle term appears in \eqref{hatPi}. Note here that $A$ is not renormalized, so that we are working with $A$ and not $\hat A$. 
As above, the zero order terms in $h$ will be ignored. Now for the term $\nabla_{\nu}h^{\tT}$, using \eqref{ee1} and \eqref{ee2.1}, we have the estimate 
$$|\int_{\cC_t}\<\nabla_{\nu} h^{\tT}, \dot f \hat \Pi\> | \leq C[||h_A||_{\bar H^1(\cC_t)}^2 + ||f||_{\bar H^1(\cC_t)}^2 + \Lambda^2 \int_{\cC_t} (\dot f)^2 ] \leq cK.$$
Thus we have shown so far that 
\be \label{APi}
D \leq -\int_{\cC_t}\<A'_h, \dot f \hat \Pi\> + cK.
\ee

  At this point, we return to the formulas \eqref{Hlin}-\eqref{finhom}. On the right hand side of \eqref{finhom}, we have the term $2(\<A'_h, A\> - H'_h H_{\cC}) = 
  -2\<A'_h, \Pi\>$. In contrast to the right side of \eqref{Hlin} however, here in \eqref{APi} we have $\hat \Pi = \l^{-1}\Pi$ in place of $\Pi$ with $\l^{-1} > > 1$. 
Using \eqref{Hlin}, write then 
\be \label{hatPi2}
-\int_{\cC_t}\dot f \<A'_h, \hat \Pi\> dv_{g_{\cC}} = \tfrac{1}{2}\l^{-1}\int_{\cC_t} \dot f \w \Box_{\cC}f dv_{g_{\cC}} + \dot f \l^{-1}R,
\ee
where $R$ consists of the remaining (lower order) terms in \eqref{Hlin}, analysed later. 

  Recall that $\w \Box_{\cC}$ is the wave operator with respect to the Lorentz metric $\zeta := \hat \Pi$ and $\zeta(\p_t, \p_t) < 0$. Let $\sqrt{|g_{\zeta}|}$ be 
the volume density of $\zeta$, and write $\sqrt{|g_{\cC}|} = \o \sqrt{|g_{\zeta}|}$, where $\o > 0$ and $\sqrt{|g_{\cC}|}$ is the volume density of $g_{\cC}$. 

  This gives 
$$-\int_{\cC_t} \dot f \<A'_h, \hat \Pi\> dv_{g_{\cC}} = \tfrac{1}{2}\l^{-1}\int_{\cC_t} \o\dot f \Box_{\zeta}f dv_{g_{\zeta}} + \dot f \l^{-1}R.$$
The term $\int_{\cC_t}\dot f \Box_{\zeta}f dv_{g_{\zeta}}$ is just  the term used to compute the $H^1$ energy of $f$ along $\cC$. Thus, applying 
the divergence theorem (as in the usual proof for scalar energy estimates) gives 
\be \label{linverse}
\l^{-1}\int_{\cC_t}\o\dot f \Box_{\zeta}f dv_{g_{\zeta}} = - \tfrac{1}{2}\l^{-1}\int_{\Si_t} (|\dot f|^2 + |d_{\Si} f|^2) dv_{\Si_t} - \l^{-1}\int_{\cC_t} \dot f \<d \ln \o, df\>_{\zeta}dv_{g_{\cC}} 
+ \dot f \l^{-1}R.
\ee
Now the main point is that the first term on the right is negative, giving the (beneficial) negative term on the right side of \eqref{Pi1}. Next, for the $d\ln \o$ term, we have 
$|d\ln \o| \sim \l$, since $\p(\hat \Pi) = O(\l)$. So here the $\l$ terms cancel, leaving the usual $H^1$ energy of $f$, which may be easily absorbed into the 
previous large negative term. 

  The proof is then completed by estimating the last $\dot f \l^{-1}R$ term. Here $R$ consists of the terms $q(\p f)$ in \eqref{Hlin} and the terms in 
  \eqref{finhom} besides the $\<A'_h, \Pi\>$ term. A brief inspection shows that the terms in $\l^{-1}\dot f q(\p f)$ are all bounded by $||f||_{H^1(\cC)}^2$, 
  and so these terms may be absorbed as above. 
  
      For the terms in \eqref{finhom}, beginning with $\dot f \l^{-1}R'_{h_A}$, we have 
 $$|\int_{\cC_t} \dot f \l^{-1}R'_{h_A} | \leq \d \l^{-1}||f||_{H^1(\cC_t)}^2 + \d^{-1}\l^{-1}||h_A||_{H^2(\cC_t)}^2.$$
 The first term here is small compared with the $\l^{-1}$ term in \eqref{linverse} and so again may be absorbed, while the second term is of the form 
 $\d^{-1}\l^{-1}B_{\cT}$. (The same analysis holds when $F \neq 0$ and for the initial data). Next the terms involving $A^2$, $\Ric$ are $O(\l^2)$ so 
 these contribute terms controlled by $B_{\l}$. 
 
  Next we come to the gauge terms: $\d^* V'_h$ and $(\d^*)'_h V$. The normal derivatives $\d^*V'_h(\nu, \nu)$ are controlled by the Dirichlet boundary data 
$V'_{\cC}$ as in \eqref{VbarV} and so are bounded by $B_{\l}$. Since $V = O(\l)$, the same is true for $(\d^*)'_h V$ as well as for the remaining trace 
terms in \eqref{finhom}. 

  This completes the proof of Lemma \ref{mainlemma} and so also completes the proof of Proposition \ref{mainprop}. 

\end{proof}  

  Combining the results above leads then quite easily to the following main estimate on $h(\nu)^{\Si}$. 
  
 \begin{proposition} \label{prophnua}
 For $(\w U, g)$ as in Theorem \ref{Eestthm}, the term $h(\nu)^{\Si}$ satisfies the estimate 
\be \label{hnuSiest}
\max_{t\in [0,1]}||h(\nu)^{\Si}||_{\bar H^s(S_t)}^2 \leq C\l^{-1}[ ||h_A||_{H^{s+1}(\cC)}^2 + ||\eta||_{H^{s}(\cC)}^2 + ||V'_{\cC}||_{H^{s}(\cC)}^2 ] + 
\l^2  ||h||_{H^s(\w U)}^2 ,
\ee
provided the bulk data $F$, the initial data for $h$ and the initial data for the gauge field $V'_h$ vanish at $S$. 
\end{proposition}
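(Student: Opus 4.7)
The plan is to upgrade Proposition \ref{mainprop}, which establishes essentially the $s=1$ version of the desired estimate for $h(\nu)^{\Si}$, to arbitrary $s$ by commuting tangential derivatives into the vector wave equation \eqref{Lnu} and repeating the energy-type argument. Concretely, let $\p^{k}_{\|}$ denote tangential derivatives (with respect to directions along $S_t$ and $\p_t$), and apply $\p_{\|}^{s-1}$ to \eqref{Lnu}. Because $[\Box_g, \p_{\|}^{s-1}]$ is an $(s+1)$-th order operator on $h$ whose coefficients carry a $\l$-small factor by \eqref{lambda}, the quantity $h_{s} := \p_{\|}^{s-1} h(\nu)^{\Si}$ satisfies a vector wave equation of the same structure
\bes
\tfrac12 D^*D h_s + P_\Si(\p h_s) = F(h_s) + R_s(h),
\ees
where $R_s(h)$ groups commutator and curvature terms satisfying a pointwise bound $|R_s(h)|\lesssim \l |\p^{s} h|$.

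Next I would derive the differentiated boundary relations. Applying $\p_{\|}^{s-1}$ to \eqref{hnu2'}--\eqref{Lower} produces an expression for $\nabla_\nu h_s$ of exactly the form used in Proposition \ref{mainprop}, with leading term $(\hat \Pi(\nabla (\p_{\|}^{s-1} f)))^{\Si}$ plus an $L_s$ that involves $\p_{\|}^{s-1}$ applied to the target data $h_A$, $\eta$, $V'_\cC$, to the solution $f$, to $h_{0\nu}$, $h_{\nu\nu}$, and to commutator terms of order $\l$. Since the boundary-stable estimates \eqref{ee1}, \eqref{ee2.1}, \eqref{ee3}, \eqref{ee4} hold at any regularity, these give the required control on $\p_{\|}^{s-1} h_A$, $\p_{\|}^{s-1} h_{0\nu}$, $\p_{\|}^{s-1} h_{\nu\nu}$, and $\p_{\|}^{s-1} f$ by the claimed right-hand side of \eqref{hnuSiest}. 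In particular, the analog of \eqref{Hlin} applied to $\p_{\|}^{s-1} f$ is a wave equation on $\cC$ whose inhomogeneity at this differentiability level involves $(R_\cC)'_{\p_{\|}^{s-1} h_A}$, accounting for the single extra derivative on $h_A$ that appears on the right of \eqref{hnuSiest}.

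Now I would repeat verbatim the energy argument of Proposition \ref{mainprop}, pairing the differentiated equation with $\nabla_{\p_t} h_s$ and integrating by parts. The boundary integral $\int_{\cC_t}\<\nabla_{\p_t} h_s, \nabla_\nu h_s\>$ is split just as in \eqref{3} into the $\hat \Pi(\nabla(\p_{\|}^{s-1} f))^{\Si}$ contribution, a div/grad part handled by Lemma \ref{divlemma}, and lower-order $L_s$ terms handled by the appendix lemmas. The dominant term is absorbed via the wave equation for $\p_{\|}^{s-1} f$ on $\cC$ exactly as in Lemma \ref{mainlemma}, producing the negative $-c_2 \l^{-1}\|\p_{\|}^{s-1} f\|^2_{\w H^1(\Si_t)}$ which absorbs all remaining $f$-terms. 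Since only tangential derivatives have been commuted in, one then recovers normal derivatives along $\cC$ using the vector wave equation itself (solving for the transverse second derivative), and normal derivatives along $S$ in the bulk using the original scalar wave equations \eqref{e3}--\eqref{e5}, yielding the $\bar H^s(S_t)$ rather than merely $\bar H^1$ control.

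The principal obstacle I expect is bookkeeping the $\l$-weights through the commutators so that none of them degrade to $O(1)$ at leading order; in particular the $\hat \Pi$-weighted pairing must still produce a negative definite $\l^{-1}$-size term after $s-1$ commutations so that it dominates. This should be automatic provided one always commutes purely tangential derivatives (never $\nabla_\nu$), since then $[\p_{\|}^{s-1}, g^{\a\b}]$ contributes only $O(\l)$ coefficients and all commutators between $\p_{\|}^{s-1}$ and $\hat \Pi$, $\hat A$, $\hat H$ are of the same $O(\l^{|\text{derivative}|})$ order by \eqref{lambda}. With that in place, all $\l^2 \|h\|_{H^s(\w U)}^2$ error terms combine as in \eqref{Jeps} and are absorbed into the left-hand side by choosing $\l$ small, yielding \eqref{hnuSiest}.
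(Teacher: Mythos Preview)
Your proposal is correct and follows essentially the same approach as the paper. The paper first extracts the $s=1$ case from Proposition~\ref{mainprop} (with a short additional argument to absorb the boundary contribution in $\l^2\|h\|_{\hat H^1(\w U)}^2$ back into the left-hand side), and then dispatches the higher-order case in one line by ``taking derivatives of \eqref{Lnu} and commuting in the usual way''; your proposal simply spells out that commutator step in more detail, which is consistent with what the paper intends.
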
  

\begin{proof}
We first prove the result for $s = 1$. By combining the results above, one obtains the estimate 
\be \label{finest}
\max_{t\in 0,1}E_t^{\nu}  + c\l^{-1}||f||_{H^1(\cC)}^2 \leq  \d \Lambda \max_{t\in [0,1]}E_t^{\nu} + 
\d^{-1} cB_{\l} + c\l^{-1}B_{\cT}. 
\ee
Recall that $\Lambda$ is a fixed $C^3$ bound for $g$. Choose $\d$ sufficiently small, depending on $\Lambda$, so that the coefficient $\d \Lambda < \frac{1}{2}$. 
The term $\max_{t\in [0,1]}E_t^{\nu}$ on the right in \eqref{finest} may be absorbed into the left on \eqref{finest}, which then gives 
\be \label{Ehnu}
\begin{split}
\max_{t \in [0,1]}||h(\nu)^{\Si}||_{\bar H^1(S_t)}^2 + \l^{-1}||f||_{H^1(\cC)}^2 & \leq  C\l^{-1}[||h_A||_{H^{2}(\cC)}^2 + ||\eta_h||_{H^1(\cC)}^2 + ||V'_{\cC}||_{H^1(\cC)}^2 ] \\
& + c\l^2 ||h||_{H^1(\w U)}^2  + c\l^2 ||h||_{H^1(\cC)}^2 . \\
\end{split}
\ee

  The first $\l^2$ term over $\w U$ comes from the bulk $P(\p h)$ term. Regarding the last $\l^2$ term on the right over $\cC$, a simple inspection shows that this term 
comes only from the original $f$ estimate in \eqref{ee2.1} and in the Sommerfeld boundary data estimate \eqref{ee3}. For the $f$ estimate, the $h$ terms appear on the 
right of \eqref{finhom}.  These terms are exactly the terms analysed in Lemma \ref{mainlemma} from \eqref{A'} onwards, but now with $\Pi$ in place of $\hat \Pi$, i.e.~without 
the $\l^{-1}$ term. These then are easily absorbed into the other terms of \eqref{Ehnu} as previously. Similarly, for the Sommerfeld boundary data, the $h$ terms appear 
only as $H^0 = L^2$ terms, so at lower order, and so again are easily absorbed. 

  Thus, \eqref{Ehnu} may be written as 
\be \label{Ehnu1}
\max_{t \in [0,1]}||h(\nu)^{\Si}||_{\bar H^1(S_t)}^2  \leq  C\l^{-1}[||h_A||_{H^{2}(\cC)}^2 + ||\eta_h||_{H^1(\cC)}^2 + ||V'_{\cC}||_{H^1(\cC)}^2 ]
+ \l^2 ||h||_{H^1(\w U)}^2 , 
\ee
which proves \eqref{hnuSiest} with $s = 1$. 

  Taking derivatives of \eqref{Lnu} and commuting in the usual way for such energy estimates, cf.~\cite{BS}, \cite{Sa}, then gives the higher order estimates 
\eqref{hnuSiest}.

\end{proof}

The estimate \eqref{hnuSiest} is now an effective improvement and replacement for \eqref{ee5}. Summing up the previous estimates 
\eqref{eef}-\eqref{ee4} together with \eqref{hnuSiest} replacing \eqref{ee5}, by choosing $\l$ sufficiently small depending on $g$, one may absorb 
the $\l ||h||_{H^s(\w U)}$ terms on the right side of these equations into the total sum on the left, using the fact that 
$|h||_{H^s(\w U)} \leq \max_{t\in [0,1]}||h||_{\bar H^s(S_t)}$, giving then 
\be \label{Etot}
\max_{t\in [0,1]} ||h||_{\bar H^s(S_t)}^2 \leq  C\l^{-1}[ ||h_A||_{H^{s+1}\cC)}^2 + ||\eta_h||_{H^{s}(\cC)}^2 + ||V'_{\cC}||_{H^s(\cC)}^2]. 
\ee

In the discussion above, we have assumed that $F = L(h) = 0$, and that the initial data vanish. By linearity, and by the remarks given previously regarding the 
regularity required for $F$ and the initial data, it is then easy to see that \eqref{Etot} gives \eqref{Eest} when this vanishing assumption is dropped. 

The bound on the left in \eqref{Etot} gives a bound on 
\be \label{Linf}
h \in L^{\infty}(I, \bar H^s(S)).
\ee
However, from the regularity theory for wave equations with Dirichlet or Sommerfeld boundary data as in Lemma \ref{DirSomm}, by performing the 
computations in the proof of Theorem \ref{Eestthm} over arbitrarily small time intervals $I' = [t_0, t_1] \subset I$, it is easy to verify that this can be improved 
to a bound on 
$$h \in C^0(I, \bar H^s(S)),$$
so that \eqref{Etot} gives in fact \eqref{Eest}. This completes the proof of Theorem \ref{Eestthm}.  

\end{proof}

\begin{remark} 
{\rm We do not if know the $\bar H^s(S_t)$ estimate \eqref{hnuSiest} for $h(\nu)^{\Si}$ can be strengthened into a boundary stable estimate, where the norm is 
the $\bar \cH^s(S_t)$ norm. This lack of a boundary stable estimate, which is important in the general well-posedness theory for the IBVP, 
is of less importance here. Of course by the Sobolev trace theorem \eqref{Sob1}, there is a boundary stable estimate along $\cC$ 
with a loss of half-a-derivative. 
}
\end{remark}

\section{Construction of linearized solutions.} 

\subsection{Shifted boundary data.}
In this subsection, we construct solutions $h$ to the linear problem $D\w \Phi_g^H(h) = \w \tau'$ by solving Dirichlet, Sommerfeld and Neumann 
initial boundary value problems for the boundary data determined by the equations \eqref{Hlin}, \eqref{h01} and \eqref{hnua} of \S 3. This is done 
by a Picard-type iteration process whose convergence is assured by the presence of the energy estimates from \S 4. 

     The main result for this subsection is the following surjectivity or existence theorem. Let 
\be \label{tau'1}
\w \tau' = (F, (\g_S', \k', \nu', V_S'), (\g_A', \eta, V_{\cC}')) 
\ee
be a general element in $T_{\w \tau}(\w \cT^H)$, $\w \tau = \w \Phi^H(g)$. 
  
\begin{theorem}\label{modexist}
There exists $\e > 0$ such that, for any smooth metric $g$ on $\w U$ which is $C^{\infty}$ close to a standard Minkowski corner metric $g_{\a_0}$ 
as in \eqref{eps} and satisfies Assumption (*), the linearization $D\w \Phi_g^H$ at $g$ satisfies: given any $C^{\infty}$ target data 
$\w \tau' \in T_{\w \tau}(\w \cT^H)$ on $\w U$, there exists a variation $h \in T_g Met(\w U)$, such that 
\be \label{DwPhisol}
D\w \Phi_g^H(h) = \w \tau'.
\ee
Thus, the equation 
\be \label{LhF}
L(h) = F,
\ee
has a $C^{\infty}$ solution $h$ such that 
\be \label{IC1}
(g_S, K)'_h = (\g_S', \k'), \ \ (\nu_S)'_h = \nu', \ \ V'_h = V_S' \ \mbox{ on }S,
\ee
and
\be \label{BC1}
(([g_\cC]_A)'_h, \eta_h) = (\g'_A, \eta), \ \ V'_h = V_\cC' \ \mbox{ on }\cC.
\ee
Further the constructed solution $h$ depends smoothly on the data $\w \tau'$. 
\end{theorem}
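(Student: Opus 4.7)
The plan is to construct $h$ by an iteration scheme that alternates between (i) solving the bulk reduced Einstein system $L(h) = F$ with full Dirichlet data on $\cC$ and the prescribed initial data on $S$, and (ii) solving three boundary IBVPs on $\cC$ for the components of $h|_\cC$ not directly given by the modified data $(h_A, \eta_h)$, namely the scalar $f$ in the decomposition $h^{\tT} = h_A + fA$, the component $h_{0\nu}$, and the tangent-normal components $h_{\nu a}$ for $a = 2,\ldots,n$. The equations \eqref{Hlin}, \eqref{h01}, \eqref{hnua} derived in \S 3.1 give exactly such equations: a wave equation on $(\cC, -\Pi)$ for $f$ (hyperbolic by Assumption $(*)$), a Sommerfeld equation for $h_{0\nu}$, and a Neumann equation for $h_{\nu a}$. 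The $\eta$ data supplies $h_{\nu\nu} = \eta - h_{0\nu}$ algebraically, and the normal gauge equation \eqref{gaugen} is automatic by the argument in the remark following it. The essential point is that in the localized frozen-coefficient regime of \S 2.4, all coupling terms $E(h)$, $q(\partial f)$, $\widetilde Z$, $\widehat Z$ carry coefficients of size $O(\e)$ via \eqref{eps}, which is what will drive a contraction.

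Concretely, set $h^{(0)} = 0$. Given $h^{(k)}$, decompose $V'_{h^{(k)}} = V'_F + W'_{h^{(k)}}$ as in \eqref{FW}, where $V'_F$ is determined by $\widetilde\tau'$ alone (independent of $k$) and $W'_{h^{(k)}}$ is obtained from $h^{(k)}$ via \eqref{W'h} using Lemma \ref{DirSomm}. Substitute into the right-hand sides of \eqref{Hlin}, \eqref{h01}, \eqref{hnua} and solve in the triangular order: first the wave equation \eqref{Hlin} for $f^{(k+1)}$, with Cauchy data $f|_\Sigma$, $\partial_t f|_\Sigma$ supplied by Lemma \ref{fft}; then the Sommerfeld problem \eqref{h01} for $h_{0\nu}^{(k+1)}$; then $h_{\nu\nu}^{(k+1)} = \eta - h_{0\nu}^{(k+1)}$; finally the Neumann problem \eqref{hnua} for $h_{\nu a}^{(k+1)}$. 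Each subproblem admits a $C^\infty$ solution by Lemma \ref{DirSomm} and Remark \ref{Neumann}, once the corner compatibility conditions are in place. Together with the given $h_A$ these assemble into full Dirichlet data for $h^{(k+1)}|_\cC$, which together with the Cauchy data on $S$ extracted from the initial target data via Remark \ref{gaugerem}(ii) feeds the bulk Dirichlet problem \eqref{LhF}; Lemma \ref{DirSomm} then yields $h^{(k+1)}$ on $\widetilde U$.

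Convergence follows from the smallness encoded in \eqref{eps}: the difference $h^{(k+1)} - h^{(k)}$ satisfies the same scheme but is driven purely by $O(\e)$ multiples of $h^{(k)} - h^{(k-1)}$ in the boundary data and bulk forcing, so the energy estimates \eqref{DirE}, \eqref{SomE} combined with the triangular structure of the $Z$ terms give contraction in $\bar\cH^s$ for $s$ sufficiently large, provided $\e$ is small. The main obstacle is the half-derivative loss in the Neumann step \eqref{NeuE} for $h_{\nu a}$; this is the reason the theorem is formulated in the $C^\infty$ category. The loss is absorbed by running the iteration in a fixed $\bar\cH^s$ with $s$ large, exploiting the smoothness of all background data and target data to bootstrap regularity after the contraction has been established. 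A secondary technical issue is verifying that the corner compatibility conditions of \S 2.3 are maintained at every stage; this follows inductively from the fact that $\widetilde\tau' \in T(\widetilde\cT^H)$ together with the triangular structure of the compatibility equations, which was made explicit in Lemmas \ref{Man1} and \ref{Compatibility}. That the constructed limit $h$ has $V'_h$ equal to the prescribed gauge data $(V'_S, V'_\cC)$ is immediate from the decomposition $V'_h = V'_F + W'_h$ and the vanishing initial/boundary data of $W'_h$ from Lemma \ref{gauge}. Smooth dependence on $\widetilde\tau'$ is inherited from the linearity of all the subproblems.
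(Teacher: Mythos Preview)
Your strategy matches the paper's, but two steps need correction.

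First, the iteration as you describe it is inconsistent. You treat \eqref{h01} and \eqref{hnua} as ``boundary IBVPs on $\cC$'' whose output is Dirichlet data $h|_\cC$ to be fed into a full bulk Dirichlet solve $L(h^{(k+1)}) = F$. But \eqref{h01} and \eqref{hnua} involve $\partial_\nu h_{0\nu}$ and $\nu(h_{\nu a})$ and are therefore boundary conditions for \emph{bulk} IBVPs, not intrinsic equations on $\cC$. If you do solve them as bulk Sommerfeld/Neumann problems and then re-solve $L(h)=F$ as a Dirichlet problem with the resulting traces, that Dirichlet solve determines its own normal derivatives on $\cC$, which need not satisfy the Sommerfeld/Neumann conditions---so the tangential gauge equations \eqref{gauget}--\eqref{gaugeSi} are no longer enforced. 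The paper instead solves the bulk equation \emph{component-by-component} in the decomposition \eqref{comp}, each component with its own boundary condition type (Dirichlet for $h_A, f, h_{\nu\nu}$; Sommerfeld for $h_{0\nu}$; Neumann for $h_{\nu a}$), and sums. The coupling $P(\partial h)$ is handled by a correction series $\bar h = h_0 + \sum E_k$ with $E_k = O(\e^k)$ in $C^\infty$; no fixed-$s$ contraction is claimed, and indeed the Neumann half-derivative loss makes that impossible, so your ``bootstrap regularity after the contraction'' does not work as stated.

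Second, and more substantively, the verification $V'_{\bar h}|_\cC = V'_\cC$ is not immediate. The decomposition $V'_h = V'_F + W'_h$ is a definition, but the iteration produces an auxiliary field $\bar V'$ built from successive $W'_{E_k}$, and one must check $\bar V' = V'_{\bar h}$. The tangential components agree because \eqref{h01} and \eqref{hnua} are rearrangements of \eqref{gauget}--\eqref{gaugeSi}. But the normal component $V'_{\bar h}(\nu)$, governed by \eqref{gaugen}, is never directly used in the construction: the $f$-equation \eqref{Hlin} comes from the Hamiltonian constraint, not from \eqref{gaugen}. The paper closes this gap as follows. By construction $\bar h$ satisfies \eqref{Hlin} with $\bar V'$ in the gauge slot; since $L(\bar h)=F$, the actual linearized Hamiltonian constraint for $\bar h$ gives the same equation with $V'_{\bar h}$ in that slot. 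Subtracting yields $\nu\langle\bar V',\nu\rangle = \nu\langle V'_{\bar h},\nu\rangle$ on $\cC$, and then uniqueness for the Neumann problem for the gauge wave equation \eqref{V'eqn} gives $\bar V' = V'_{\bar h}$. This argument is essential and must be included.
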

  
 \begin{proof}

  We first prove Theorem \ref{modexist} in a special case; following that, the proof is completed by means of the energy estimate in Theorem \ref{Eestthm}. 
 To begin, let $\cB \subset T_{\w \tau}T(\w \cT^H)$ be the subset of target data $\w \tau'$ for which the $C^k$ norms of $\w \tau'$ are uniformly bounded in $k$: thus 
$\cB = \cup_{M\in \bZ^+} \cB_M$, where $\cB_M$ is the space of $\w \tau'$ such that 
\be \label{Mbound}
||\w \tau'||_{C^k} \leq M,
\ee
for all $k \in \bZ^+ \cup \{0\}$. Similarly, we assume in this special case that the background metric $g$ satisfies $g \in \cB$, i.e.~the $C^k$ derivatives 
of $g$ are uniformly bounded, independent of $k$. 

   Theorem \ref{modexist} will be proved for $\w \tau', g \in \cB$ by a standard Picard-type iteration process. As in \S 4, we work with separate equations 
for each of the terms 
\be \label{comp}
h^{\tT}, \ h_{0\nu}, \ h_{\nu \nu}, \ h(\nu)^{\Si},
\ee
comprising $h$. The equations for $f$, $h(\nu, \cdot)$ along the boundary $\cC$ are those described in \S 3.

\medskip

0. In this step, we solve all the relevant equations in \S 3 without all the coupling terms, i.e.~the $P$-terms in the bulk and the $E$-terms on $\cC$ are 
set to zero, and we use only the given target data in $T(\w \cT^H)$ (i.e.~$F$ and $Y$ terms) for bulk, initial and boundary data. This is done step-by-step 
for each of the terms in \eqref{comp}, 

  We begin with the construction of the gauge field. Thus let $V_0'$ be the solution to the modified gauge equation 
\be \label{modV'}
-\tfrac{1}{2}(\Box_g V'_0+ \Ric_g (V'_0)) + \tfrac{1}{2}\nabla_{V} V'_0 = \b F,
\ee
(so the `error' term $\b'_h \Ric + O_{2,1}(V,h)$ is dropped from \eqref{V'}) and with initial and boundary data given by the target data $V'_S$ and $V'_{\cC}$.  
By the Dirichlet energy estimate \eqref{DirE}, we have 
$$||V_0'||_{\bar \cH^s(S_t)} \leq C||\w \tau'||_{H^s(\w \cT^H)}.$$

  Next solve the modified evolution equation \eqref{Hlin} for $f_0$ along $\cC$, 
\be \label{Hlin2} 
\w \Box_\cC f _0 + q(\p f_0) = R'_{h_A} - \tr(F-\d^* V_0') + 2(F-\d^* V_0')(\nu,\nu),
\ee
so that $E$ is set to zero and $V'_h$ is set to $V_0'$. Recall from Lemma \ref{fft} that the initial conditions $f_0 |_{\Si}$ and $\p_t f_0 |_{\Si}$ are determined 
by the target data in $T(\w \cT)$. Solving \eqref{Hlin2} on $\cC$ with this data gives the ($C^{\infty}$) Dirichlet boundary value for $f_0$ along 
$\cC$. Standard energy estimates then give 
$$||f_0||_{H^s(\cC)} \leq C||\w \tau'||_{H^s(\w \cT^H)}.$$
Since the Dirichlet data $h_A$ is given in $\w \tau'$, the Dirichlet boundary data for $h_0^{\tT} = h_A + f\hat A$ is thus also given. 
With such Dirichlet boundary data, we then solve the bulk equation 
\be \label{hT0}
-\tfrac{1}{2}\Box_g h_0^{\tT}   = F^{\tT}, 
\ee
with the given initial conditions from $\w \tau'$ on $\w U$. By \eqref{DirE}, the solution $h_0^{\tT}$ satisfies the energy estimate 
\be \label{H0Test}
||h_0^{\tT}||_{\bar \cH^s(S_t)} \leq C||\w \tau'||_{H^s(\w \cT^H)}.
\ee

   Turning next to $h_{0\nu}$, define $(h_0)_{0\nu}$ in $\w U$ to be the unique solution to 
$$-\tfrac{1}{2}\Box_g (h_0)_{0\nu} = -\tfrac{1}{2}\Box_g( \frac{1}{|\nabla x^1|}g^{1 \a}h_0(\p_{\a}, \p_t)) = F_{0\nu},$$
which satisfies the Sommerfeld boundary condition \eqref{h01} for $(h_0)_{0\nu}$, with $\hat Y$ given by the target data in $\w \tau'$, $\hat Z 
= \hat Z(f_0)$ given by $f_0$ above and $\hat E = 0$ on $\cC$ together with the initial conditions from $\w \tau'$. The Sommerfeld energy 
estimate \eqref{SomE} then gives 
\be \label{00nu}
||(h_0)_{0\nu}||_{\bar \cH^s(S_t)} \leq C||\w \tau'||_{H^s(\w \cT^H)}.
\ee
The induced Dirichlet boundary data for $(h_0)_{0\nu}$ on $\cC$ then gives Dirichlet data for $(h_0)_{\nu \nu}$ on $\cC$ in terms of 
the prescribed $\eta$, cf.~\eqref{eta'}. We then solve for $(h_0)_{\nu \nu}$ in the bulk $\w U$, 
$$-\tfrac{1}{2}\Box_g (h_0)_{\nu \nu}  = F_{\nu \nu},$$
with these given Dirichlet boundary conditions for $h_{\nu \nu}$ and with the given initial conditions from $\w \tau'$. Via \eqref{00nu}, the 
Dirichlet boundary data for $h_{\nu\nu}$ are controlled along $\cC$ and so from the Dirichlet energy estimate \eqref{DirE}, we have the bound 
$$||(h_0)_{\nu\nu}||_{\bar \cH^s(S_t)} \leq C||\w \tau'||_{H^s(\w \cT^H)}.$$
 
   Finally we carry out the same process for the components $h_{\nu a}$ of $h(\nu)^{\Si}$. Let $h_0(\nu)^{\Si}$ be the solution to 
$$-\tfrac{1}{2}\Box_g (h_0)_{\nu a} = F_{\nu a},$$
with the Neumann data $\nu((h_0)_{\nu a}) \sim (\nabla_{\nu}h(\nu))^a$ given by \eqref{hnua}. Here we set $\w Y(\cT)$ from 
the target data in $T(\w \cT)$, so in particular $V'_h = V_0'$, $f = f_0$ and $h_{\nu \nu} = (h_0)_{\nu\nu}$ and $\w E = 0$. From the Neumann estimate 
\eqref{NeuE} we obtain 
\be \label{0nua}
||(h_0)_{\nu a}||_{\bar \cH^s(S_t)} \leq C||\w \tau'||_{H^{s+1}(\w \cT^H)}.
\ee
Note the significant loss-of-derivative compared with the previous estimates. 

 Summing up the solutions above gives a solution $h_0$ to 
\be \label{h0F}
-\tfrac{1}{2}\Box_g h_0 = F,
\ee
in $\w U$ with the prescribed initial and boundary conditions from $\w \tau'$. Further 
$$||h_0||_{\bar \cH^s(S_t)} \leq C||\w \tau'||_{H^{s+1}(\w \cT^H)}.$$

 The next stages are error, so $O(\e)$ adjustments to $h_0$, and so are denoted $E_1$, $E_2$, etc. In contrast to 
Step 0, we now set all the target data in the equations from \S 3 to zero while the $E$ terms in these equations become non-zero. 

1.  First determine the correction to the gauge by letting $V_1'$ be the solution to the equation
$$-\tfrac{1}{2}(\Box_g V'_1+ \Ric_g (V'_1)) + \tfrac{1}{2}\nabla_{V} V'_1 = \b'_{h_0} \Ric + O_{2,1}(V,h_0),$$
with zero initial and Dirichlet boundary data. As above, we then have the estimate 
$$||V_1'||_{\bar \cH^s(S_t)} \leq C\e ||h_0||_{H^s(\w U)} \leq C\e ||\w \tau'||_{H^{s+1}(\w \cT^H)}.$$

  Next, solve the evolution equation \eqref{Hlin} for $f_1$ along $\cC$ with $Y(\cT, V'_h) = Y(0, V_1')$, $E = E(h_0)$ and zero initial data. This gives 
Dirichlet boundary data for $f_1$ on $\cC$ and since the coefficients of $E$ are $O(\e)$ in $C^{\infty}$ by \eqref{eps}, 
$$||f_1||_{H^s(\cC)} \leq C\e ||h_0||_{H^s(\cC)} \leq C\e ||\w \tau'||_{H^{s+1}(\w \cT^H)}.$$
We set $(E_1)_A = 0$ on $\cC$ and so $E_1^{\tT} = f_1 \hat A$. Using this Dirichlet boundary value, we then solve 
$$-\tfrac{1}{2}\Box_g E_1^{\tT} = -P^{\tT}(h_0),$$
in $\w U$ with zero initial data. The Dirichlet energy estimate then gives 
$$||E_1^{\tT}||_{\bar \cH^s(S_t)} \leq C \e ||\w \tau'||_{H^{s+1}(\w \cT^H)}.$$

  Next, for $(E_1)_{0\nu}$, solve the IBVP 
$$-\tfrac{1}{2}\Box_g (E_1)_{0\nu} = -P_{0\nu}(h_0),$$
on $\w U$ with Sommerfeld boundary data in \eqref{h01} with $\hat Y = 0$, $\hat Z = \hat Z(f_1)$, $\hat E = \hat E(h_0)$ and with zero 
initial data and with the Sommerfeld data given above. As above this gives 
$$||(E_1)_{0\nu}||_{\bar \cH^s(S_t)} \leq C \e ||\w \tau'||_{H^{s+1}(\w \cT^H)}.$$
Next set $(E_1)_{\nu \nu} = -(E_1)_{0\nu}$ on $\cC$ (since there is no correction to $\eta$) and solve the Dirichlet IBVP 
$$-\tfrac{1}{2}\Box_g (E_1)_{\nu \nu} = -P_{\nu \nu}(h_0)$$
in $\w U$, with zero initial data. Then  
$$||(E_1)_{\nu\nu}||_{\bar \cH^s(S_t)} \leq C \e ||\w \tau'||_{H^{s+1}(\w \cT^H)}.$$

  Finally, solve the IBVP 
$$-\tfrac{1}{2}\Box_g (E_1)_{\nu a} = -P_{\nu a}(h_0)$$
in $\w U$, with zero initial data and with Neumann boundary data given by \eqref{hnua} with $\w Y(\cT, V'_h) = (0, V_1')$, $\w Z = Z(f_1, (E_1)_{\nu \nu})$ 
and $\w E = \w E(h_0)$. As previously, we then have the bound  
$$||(E_1)_{\nu a}||_{\bar \cH^s(S_t)} \leq  C\e||\w \tau'||_{H^{s+2}(\w \cT^H)}.$$

  As in Step 0, summing these components together then gives $E_1$ solving 
$$-\tfrac{1}{2}\Box_g E_1  = -P(h_0),$$
in $\w U$. The term $E_1$ has zero target initial data and zero target boundary data and satisfies the estimate 
$$||E_1||_{\bar \cH^s(S_t)} \leq C\e ||\w \tau'||_{H^{s+2}(\w \cT^H)}.$$

2. Next, $E_2$ is constructed in the same way as $E_1$ above, with $E_1$ in place of $h_0$ and $E_2$ in place of 
$E_1$. Thus, first $V_2'$ solves 
$$-\tfrac{1}{2}(\Box_g V'_2+ \Ric_g (V'_2)) + \tfrac{1}{2}\nabla_{V} V'_2 = \b'_{E_1} \Ric + O_{2,1}(V,E_1),$$
and with zero initial and Dirichlet boundary data. As previously, we then have the estimate 
$$||V_2'||_{\bar \cH^s(S_t)} \leq C\e^2  ||\w \tau'||_{H^{s+2}(\w \cT^H)}.$$
Similarly, $E_2$ solves 
$$-\tfrac{1}{2}\Box_g E_2  = -P(E_1),$$
in $\w U$, with zero target  initial data and zero target boundary data. The boundary data for $E_2$ is constructed by 
setting $Y(\cT, V'_h) = (0, V'_2)$, $E = E(E_1)$ in \eqref{finhom}, $\hat Z = \hat Z(f_2, (E_2)_{\nu \nu})$, $\hat E = \hat E(E_1)$ in \eqref{h01terms} and 
$\w Z = \w Z(f_2)$, $\w E = \w E(E_1)$ in \eqref{hnua}.  As before, we then obtain the bound 
$$||E_2||_{\bar \cH^s(S_t)} \leq C_2\e ||E_1||_{\bar \cH^{s+2}(S_t)} \leq C_2 C_1\e^2||\w \tau'||_{H^{s+4}(\w \cT^H)},$$
where the constants $C_i$ reflect the dependence of the estimates on $s$. 

   Continuing inductively in this way, it follows that if $\w \tau' \in \cB$, i.e.~$\w \tau' \in \cB_M$ for some $M$, then the sequence $\{E_m\}$, 
$m \geq 1$, satisfies the bound 
$$||E_m||_{\bar \cH^s(S_t)} \leq C_m\e ||E_{m-1}||_{\bar \cH^{s+2}(S_t)} \leq C_m C_{m-1} \e^2 ||E_{m-2}||_{\bar \cH^{s+4}(S_t)} \leq \cdots \leq 
(\prod_1^m C_i) \e^m ||\w \tau'||_{H^{s+2m}(\w \cT^H)}.$$
The constants $C_i$ derive from the energy estimates \eqref{DirE}, \eqref{SomE} and \eqref{NeuE}. These depend on the $C^s$ norm of the 
background metric $g$ at order $s$, due to the commutator $[\p_{x^{\a}}, \Box_g] = O(\p^2 g)$ and its iterates. However, since by assumption 
$g \in \cB$ also, there is a fixed constant $C$, independent of $s$, such that $\prod_1^m C_i \leq C^m$, so that  
$$||E_m||_{\bar \cH^s(S_t)} \leq C^m \e^m M.$$
By choosing $\e$ sufficiently small, it follows that the sequence $h_k = h_0 + \sum_1^k E_i$ converges to a limit $h \in C^{\infty}$. 
By construction we have 
$$-\tfrac{1}{2}\Box_g(h_0 + \sum E_i)  = F - P(h_0 + \sum E_i),$$
so that 
\be \label{Lbarh}
L(h) = F.
\ee
Hence $h$ solves \eqref{LhF} as well as the initial and boundary conditions in \eqref{IC1}-\eqref{BC1} except those involving the gauge field. The 
sequence of gauge fields $(V')^n  = V_0' + \sum_1^{n}V_m'$ also converges in $C^{\infty}$ to a limit field $V'$ 
satisfying \eqref{V'} and the initial and boundary conditions $V_S', V_{\cC}'$. It remains to show that 
$$V' = V'_h.$$ 
By construction, both $V'$ and $V'_h$ have the same initial values on $S$ and $(V')^{\tT} = (V'_h)^{\tT}$ on $\cC$. The construction of $h$ does not use 
any equation for $V'_h(\nu)$. To determine such an equation, note first that it follows by taking limit of equation \eqref{Hlin} that $f = \lim_{i\to \infty}\sum_0^i f_j$ 
satisfies
\be \label{Hlin22}
\begin{split} 
\w \Box_{\cC} f + q(\p f) & =  R'_{h_A} - \tr(F - \d^* V') + 2(F - \d^* V')(\nu,\nu)\\
 &+2\<A'_h, A\> - 2H_\cC H'_h - 2\<A\circ A, h\> + 4\Ric(\nu,\nu'_h) + \<\Ric, h\>\\
&+ \tr [(\d^*)'_h V] - 2[(\d^*)'_h V](\nu,\nu), \\
\end{split}
\ee
along $\cC$. On the other hand, by \eqref{Lbarh}, $(\Ric + \d^*V)'_h = F$ and applying the linearization of the Hamiltonian constraint shows 
that \eqref{Hlin22} holds with $V'_h$ in place of $V'$. This gives 
$$\tr(\d^*V')-2\d^* V'(\nu,\nu) = \tr(\d^* V'_h) - 2\d^*  V'_h(\nu,\nu) \mbox{ on }\cC.$$
Since, as noted above, $(V')^{\tT} = (V'_h)^{\tT}$, this implies that 
$$\nu \<V', \nu\> = \nu\<V'_h, \nu\>.$$
Since both $V'$ and $V'_h$ satisfy the same gauge equation \eqref{V'}, it follows from uniqueness of solutions to the mixed Dirichlet-Neumann 
boundary value problem that $V' = V'_h$. This completes the proof of Theorem \ref{modexist} in the case $g \in \cB$ and $\w \tau' \in \cB$. 

   Next, observe that $\cB \subset {\rm Im} \, D\w \Phi^H$ is dense in $\w \cT^H$ in the $C^{\infty}$ semi-norm topology on $\w \cT^H$. To see this, by decomposing the 
relevant tensors into their components, it suffices to show that the space of $C^{\infty}$ functions on a compact domain $D \subset \bR^{n+1}$ for which all $C^{k}$ norms 
 are uniformly bounded by a constant (cf.~\eqref{Mbound}), is a dense subspace of $C^{\infty}(D)$ with the semi-norm topology. However, this is a standard 
 result, which follows easily from the Stone-Weierstrass theorem; all $\w \tau'$ with polynomial component functions are in $\cB$. 
 The same discussion holds for the background metric $g$. 
 
    Finally, Theorem \ref{Eestthm} implies that the range of $D_g\w \Phi^H$ is closed in the $C^{\infty}$ topology of $T_{\w \tau}(\w \cT^H)$. To see this, suppose 
$h_i$ is $C^{\infty}$ and $\w \tau_i = D_g\w \Phi^H(h_i)$. Suppose $\w \tau_i \to \w \tau$ in $C^{\infty}$. Then for each fixed $s$, $||\w \tau_i||_{H^s(\w \cT^H)} \leq K$, 
for some $K$, possibly depending on $s$. The energy estimate \eqref{Eest} then implies that $||h_i||_{\cN^s(\w U)} \leq CK$ is also uniformly bounded for each 
fixed $s$ and hence a subsequence converges in $C^{\infty}$ to a limit $h \in C^{\infty}$. Clearly $\w \tau' = D_g \w \Phi^H(h)$. Since ${\rm Im} \, D_g\w \Phi^H$ 
is both dense and closed, it follows that $D_g\w \Phi^H$ is surjective in $C^{\infty}$, for $g \in \cB$. The same argument proves surjectivity for general $g \in C^{\infty}$. 
Thus, for $g \in C^{\infty}$, choose $g_i \in \cB$ such that $g_i \to g$ in $C^{\infty}$. For each fixed $i$, $D_{g_i}\w \Phi^H$ is surjective, so the equation 
$D_{g_i}\w \Phi^H(h_i) = \w \tau_i'$ is solvable for any $\w \tau_i'$. Given any $\w \tau' \in T_{\w \tau}(\w \cT^H)$, choose $\w \tau_i' \to \w \tau'$ in $C^{\infty}$. 
Applying the energy estimate \eqref{Eest} as before shows that the sequence $\{h_i \}$ 
converges to a limit $h$ in $H^s$ for each fixed $s$.  Thus $h_i \to h$ in $C^{\infty}$ and $D_g \w \Phi^H(h) = \w \tau'$.  

Also, the construction above  is clearly smooth in the data $T(\w \cT^H)$ and in $g$. This completes the proof of Theorem \ref{modexist}.  

\end{proof}

\begin{remark}
{\rm One might consider trying to use the equation \eqref{gaugen} in place of \eqref{Hlin} to determine $f$. This relates the normal derivative 
$\nu(f)$, (or more precisely $\nu(Hf)$, in terms of the other components of $h$. Thus, one might consider using this to determine 
the Neumann derivative of $f$ at $\cC$ and hence determine $f$ in the bulk, in terms of other data. An advantage of this approach would 
be that one does not then need Assumption (*), only $A \neq 0$ is needed (for the equivalence relation) and $H \neq 0$. 
  
   However. this approach cannot be expected to work, since the leading order terms $\nu(h_{\nu \nu})$ and $\d_{\cC}(h(\nu)^{\tT})$ in 
\eqref{gaugen} do not have $\e$-coefficients, in contrast to \eqref{Hlin}, so that the iteration process cannot be expected to converge.  
   }
\end{remark}

The linear solutions constructed in Theorem \ref{modexist} are unique, by the energy estimate \eqref{Eest}.

 \subsection{Construction of linearized solutions: Dirichlet boundary data.} 
 
   We now use Theorem \ref{modexist} to prove the analogous result for Dirichlet boundary data. 
 Let 
 \be \label{tau'2}
\tau' = (F, (\g_S', \k', \nu', V_S'), (\g_{\cC}', V_{\cC}')),
\ee
be a general element in $T_{\tau}(\cT^H)$, $\tau = \Phi^H(g)$.  

\begin{theorem}\label{Direxist}
There exists $\e > 0$ such that, for any smooth vacuum Einstein metric $g$ in harmonic gauge ($V_g = 0$) on $\w U$ which is $C^{\infty}$ close to a 
standard Minkowski corner metric $g_{\a_0}$ as in \eqref{eps} and satisfies Assumption (*), the linearization $D \Phi_g^H$ at $g$ satisfies: given 
any $C^{\infty}$ target data $\tau' \in T_{\tau}(\cT^H)$ on $\w U$, there exists a variation $h \in T_g Met(\w U)$, such that 
\be \label{DPhisol}
D\w \Phi_g^H(h) =  \tau'.
\ee
Thus, the equation 
\be \label{LhF1}
L(h) = F,
\ee
has a $C^{\infty}$ solution $h$ such that 
\be \label{C1}
(g_S, K)'_h = (\g_S', \k'), \ \ (\nu_S)'_h = \nu', \ \ V'_h = V_S' \ \mbox{ on }S,
\ee
and
\be \label{C2}
h^{\tT}  = \g_{\cC}', \ \ V'_h = V_\cC' \ \mbox{ on }\cC.
\ee
Further the constructed solution $h$ depends smoothly on the data $\tau'$. 
\end{theorem}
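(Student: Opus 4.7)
I will reduce Theorem \ref{Direxist} to Theorem \ref{modexist} by a pure-gauge correction that absorbs precisely the scalar component of the Dirichlet data which is not captured by the modified data $(h_A,\eta_h)$. The crucial observation is that since $g$ is vacuum Einstein in harmonic gauge ($\Ric_g=0$, $V_g=0$), an infinitesimal diffeomorphism $\d^*X$ satisfies $\Ric'_{\d^*X}=0$ and $(\d^*)'_{\d^*X}V=0$, so
\[
L(\d^*X)=\d^* V'_{\d^*X}
\]
is purely gauge. Moreover, by \eqref{normmove}, such a deformation with $X=\bar f\,\nu$ contributes exactly $f\hat A$ to the boundary metric on $\cC$, which is the component needed to complete $h_A$ to $\g'_\cC$.

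\medskip

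\textbf{Construction.} Given Dirichlet target data $\tau'=(F,\g'_S,\k',\nu',V'_S,\g'_\cC,V'_\cC)\in T_\tau(\cT^H)$, pointwise decompose $\g'_\cC=h_A+f\hat A$ on $\cC$ with $\<h_A,\hat A\>=0$. By Lemma \ref{fft} the corner data $f|_\Si$ and $\p_t f|_\Si$ are determined and compatible with $(\g'_S,\k')$. Extend $f$ smoothly to $\bar f$ on $\w U$ with $0,1$-jet at $S$ matching the admissible initial values, let $\nu$ be a smooth extension of $\nu_\cC$, and set
\[
\hat h:=\d^*(\bar f\,\nu),
\]
so that $\hat h^{\tT}|_\cC=f\hat A$. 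Define modified target data $\w\tau'\in T(\w\cT^H)$ by subtracting the contributions of $\hat h$ from $\tau'$: $\w F:=F-\d^*V'_{\hat h}$, $\w h_A:=h_A$, $\w\eta:=-\eta_{\hat h}$, and the initial and gauge-boundary data of $\w\tau'$ are $\tau'$ minus the corresponding restrictions of $\hat h$. Apply Theorem \ref{modexist} to produce $\tilde h\in C^\infty(\w U)$ with $D\w\Phi^H_g(\tilde h)=\w\tau'$, and set $h:=\tilde h+\hat h$.

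\medskip

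\textbf{Verification and main obstacle.} Linearity of $L$ and of all data-restriction maps immediately gives $L(h)=F$, the correct initial data, and the correct gauge boundary data. The only nontrivial remaining identity is $h^{\tT}|_\cC=\g'_\cC$, which (since $\hat h^{\tT}=f\hat A$) reduces to showing that the $\hat A$-coefficient $\tilde f$ of $\tilde h^{\tT}$ vanishes identically on $\cC$. This is the heart of the argument. By construction, $\tilde h^{\tT}|_\Si$ has no $\hat A$-component, so Lemma \ref{fft} yields $\tilde f|_\Si=\p_t\tilde f|_\Si=0$. On $\cC$, $\tilde f$ satisfies the hyperbolic equation \eqref{Hlin}, which is well-posed under Assumption~(*). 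Because $g$ is vacuum Einstein in harmonic gauge and $\tilde h$ satisfies $L(\tilde h)=\w F$, the linearized Hamiltonian constraint holds as an algebraic identity for $\tilde h$; substituting $\tilde f=0$ (so $\tilde h^{\tT}=h_A$) into the right-hand side of \eqref{Hlin} reduces it precisely to this identity and therefore produces $0$. Uniqueness for the hyperbolic initial value problem \eqref{Hlin} with vanishing initial data then forces $\tilde f\equiv 0$ on $\cC$, completing the verification. Smoothness of $h$ in $\tau'$ follows from the corresponding smoothness in Theorem \ref{modexist} and the explicit smooth dependence of $\hat h$ on $f$.
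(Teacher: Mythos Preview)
Your overall strategy---reduce to Theorem \ref{modexist} and correct by a pure-gauge term $\d^*X$ normal to $\cC$---is the same as the paper's. But you apply the gauge correction \emph{before} solving the modified problem, whereas the paper applies it \emph{after}; this ordering difference is what breaks your argument.

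The genuine gap is the claim that $\tilde f\equiv 0$ on $\cC$. Equation \eqref{Hlin} is an \emph{identity}: it holds for the actual $\tilde f$ determined by the iteration in Theorem \ref{modexist}, with right-hand side $Y(\w\cT)+E(\tilde h)$. That right-hand side does not depend on $\tilde f$ at all; it depends on the target data and on the extrinsic terms $A'_{\tilde h}$, $H'_{\tilde h}$, $W'_{\tilde h}$, which are fixed by the bulk solution $\tilde h$. So you cannot ``substitute $\tilde f=0$ into the right-hand side''---there is nothing to substitute. Your appeal to the linearized Hamiltonian constraint is circular: \eqref{Hlin} \emph{is} the linearized Hamiltonian constraint, rewritten; saying it holds for $\tilde h$ is exactly the statement $\w\Box_\cC\tilde f+q(\p\tilde f)=Y(\w\cT)+E(\tilde h)$, which tells you nothing about whether the right-hand side vanishes. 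In general it does not, and $\tilde f$ is whatever this inhomogeneous hyperbolic equation produces.

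The paper sidesteps this entirely. It applies Theorem \ref{modexist} first, obtaining $\w h$ with some uncontrolled $\w f$, and only \emph{then} chooses $X$ with $X|_\cC=(f'-\w f)\nu$ and $V'_{\d^*X}=0$, $X=\p_t X=0$ on $S$. The key technical point you are missing is the verification that $\p_t^k(f'-\w f)=0$ on $\Si$ for all $k$ (equation \eqref{comf}), which is what makes this $X$ smoothly well-defined and compatible; this follows because both $f'$ and $\w f$ satisfy the same jet of the Hamiltonian constraint at the corner. With $\Ric_g=V_g=0$ one then has $L(\d^*X)=0$ exactly, and $h=\w h+\d^*X$ solves the Dirichlet problem. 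Your choice of $\hat h$ in advance cannot anticipate $\w f$, so there is no mechanism to force the residual $\tilde f$ to vanish.
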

  
 \begin{proof}

  Any given choice of Dirichlet boundary data $\g_{\cC}'$ on $\cC$ may be decomposed as in \eqref{hA} uniquely as 
\be \label{hT}
\g_{\cC}' = \g'_A + f' A,
\ee
for some $f'$. Also, the derivatives $a_k = \p_t^k(h_{0\nu} + h_{\nu \nu})$, $k = 0, 1, 2,\dots$ at $\Si$ are uniquely determined by the target data $\tau'$, 
through the compatibility conditions at $\Si$ as in Lemma \ref{Compatibility}. By the Borel Lemma, (cf.\cite[Thm.~1.2.6]{Ho} for example), there exists a 
smooth function $\eta'$ on $\cC$ such that $\p_t^k \eta' = a_k$. Thus, choose a fixed extension operator from such corner data to $C^{\infty}(\cC)$ for 
which $||\eta'||_{C^k(\cC)} \leq c_k \sum_{j=0}^k ||a_j||_{C^0}$. 

   Now by Theorem \ref{modexist}, for any $\w \tau'$ as in \eqref{tau'1}, there exists a solution $\w h$ of \eqref{DwPhisol}, i.e. 
$$D\w \Phi^H(\w h) = \w \tau'.$$
Recall that target data $\w \tau'$ and $\tau'$ differ only in their boundary data for the metric. 
Choose $\w \tau'$ equal to $\tau'$, but with $\g_{\cC}'$ replaced by $(\g'_A, \eta')$, so that 
 $$\w h^T = \g'_A + \w f A, \ \  \w h_{0\nu} +  \w h_{\nu \nu} = \eta'.$$
     
 Choose the vector field $X$ on $\w U$ such that 
 \be \label{Xgauge}
V'_{\d^*X} = 0  \ {\rm on} \ \w U,  \ \ X|_{\cC} = (f' - \w f)\nu ,
\ee
and with $X = \p_t X = 0$ on $S$. We note that the compatibility condition holds: 
\be\label{comf}
\p^k_t(f'-\w f)=0 \ \mbox{ for all } k=0,1,2,...\  \mbox{ on }\Si.
\ee 
For $k=0,1$, this follows directly from Lemma \ref{fft}. The derivative $\p^k_t \w f ~(t\geq 2)$ is determined by the $\p_t^{k-2}$ derivative of the Hamiltonian constraint 
equation \eqref{Hamlin2} at the corner. On the other hand, since $\tau'$ is chosen to satisfy compatibility conditions of all orders, it follows that $f'$ also satisfies the 
Hamiltonian constraint equation and any order $\p_t$ derivatives of it along the corner. This gives \eqref{comf}.  

    Setting 
$$h = \w h + \d^*X,$$
one then has 
\be\label{error}
D\Phi^H(h) = \tau' +(L(\d^* X), 0,...,0) = \tau' +(\cL_X \Ric_g+(\d^*)'_{\d^* X}V_g, 0,...,0).
\ee
Since $g$ is vacuum and in harmonic gauge, $\Ric_g = V_g = 0$, which completes the proof. 
 \end{proof}

   The proof above does not directly generalize to non-vacuum metrics, due to the error term $L(\d^*X) = \cL_X \Ric_g+(\d^*)'_{\d^* X}V_g$ in \eqref{error}. 
This will be small when $g$ is near-vacuum, which is sufficient to apply the Nash-Moser theory as discussed in \S 6.

 \section{Linear global existence and energy estimates.}
 
  In this section, we prove the global versions of the results of \S 3 and \S 4, both for the shifted boundary data and for Dirichlet boundary data spaces. 
This is done by patching together local solutions to obtain global in space solutions. 

  The first main result proves existence and uniqueness for the shifted boundary data in the linearized setting. 

\begin{theorem}\label{modglobexist}
Let $(M, g)$ be a smooth globally hyperbolic Lorentzian manifold, $M \simeq I \times S$, with compact Cauchy surface $S$ having compact boundary 
$\p S = \Si$ and with timelike boundary $(\cC, g_{\cC})$. Suppose $g$ satisfies the convexity Assumption $(*)$ at $\cC$. Then for any smooth 
$\w \tau' \in T_{\w \tau}(\w \cT^H)$, the equation 
\be \label{wL3}
D\w \Phi_g^H(h) = \w \tau'
\ee
has a unique $C^{\infty}$ solution $h$. The constructed solution $h$ depends smoothly on the data $\w \tau'$. 

Moreover, any $h$ satisfying \eqref{wL3} satisfies the tame energy estimate 
\be \label{Eest2}
||h||_{\cN^s(M)}  \leq C\l^{-3/2} ||D\w \Phi_g^H(h)||_{H^s(\w \cT^H)}. 
\ee
\end{theorem}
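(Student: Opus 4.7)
The plan is to globalize the local existence Theorem \ref{modexist} and the local energy estimate Theorem \ref{Eestthm} via a finite cover of $M$, exploiting the linearity of $D\w \Phi_g^H$ in $h$ for fixed background $g$. First I would choose a finite cover $\{\w U_i\}_{i=1}^N$ by rescaled adapted charts of the types described in \S 2.4: corner charts near $\Si$, lateral boundary charts along $\cC$, Cauchy-only charts along $S$, and interior charts. By compactness of $S$ each $\w U_i$ can be taken small enough that the rescaling of \S 2.4 renders $g|_{\w U_i}$ within $\e$ of a Minkowski corner metric, so that both Theorem \ref{modexist} and Theorem \ref{Eestthm} apply. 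Fix a subordinate partition of unity $\{\chi_i\}$.

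For existence, I would construct $h$ as a finite sum of local solutions. In each corner or lateral boundary chart, apply Theorem \ref{modexist} with localized target data $\chi_i \w \tau'$ to obtain a smooth local solution $h_i$ on $\w U_i$; in interior and Cauchy-only charts, invoke standard hyperbolic IBVP theory via Lemma \ref{DirSomm}. Extending each $h_i$ by zero outside its support and setting $h = \sum_i h_i$, linearity of all the relevant data maps (for fixed $g$), namely $h \mapsto L(h)$, $h \mapsto (h_S, K'_h, \nu'_h, V'_h|_S)$, $h \mapsto (h_A, \eta_h)$, and $h \mapsto V'_h|_\cC$, gives $D\w \Phi_g^H(h) = \sum_i \chi_i \w \tau' = \w \tau'$ on $M$. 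Smooth dependence on $\w \tau'$ is inherited directly from Theorem \ref{modexist}.

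For the global estimate and uniqueness, given any $\bar h$ satisfying \eqref{wL3}, the idea is to apply the local estimate \eqref{Eest} on each chart $\w U_i$. By finite propagation speed for the wave-type system \eqref{L4}, the restriction $\bar h|_{U_i}$ at time $t \in [0,t_0]$ (for some uniform $t_0 > 0$ depending on the cover and $g$) is determined by the data in a causal subset of $\w U_i$, so the local estimate applies without spurious boundary contributions from $\partial \w U_i \setminus (S\cup\cC)$. Summing over $i$ and iterating the time-slab argument to cover $I=[0,1]$ yields \eqref{Eest2} with a constant depending on the cover and the globally bounded geometry of $g$. Uniqueness is then immediate from \eqref{Eest2} applied to the difference of two solutions.

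The principal technical obstacle is the passage from the strictly localized IBVP assumed in Theorem \ref{Eestthm} --- where the target data is compactly supported away from $\partial \w U \setminus (S \cup \cC)$ --- to the restriction of a global solution $\bar h$ to a single chart. Handling this ``artificial'' spatial frontier requires either a systematic use of finite propagation speed to guarantee that the boundary contributions vanish on a uniform time strip, or a localization by cutoff $\chi_i \bar h$ with the resulting commutators $[L,\chi_i]\bar h$ absorbed by a Gronwall argument. The latter is delicate because the $\l^{-1}$ factor in \eqref{Eest} does not interact well with the $O(1)$ commutator coefficients (since $\chi_i$ is independent of the rescaling), so the partition must be chosen at a scale compatible with the $\l_i$ and the absorption performed chart-by-chart before summing.
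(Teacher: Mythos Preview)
Your proposal is correct and follows the same approach as the paper: cover a time strip by corner charts near $\Si$ together with an interior chart, localize the target data by a partition of unity, solve each local problem via Theorem \ref{modexist} (or the standard Cauchy problem in the interior), sum by linearity, and invoke finite propagation speed so the local solutions extend by zero. The paper resolves the obstacle you flag by the finite-propagation route you list first---the constructed $h_i$ have compact support in $\w U_i$ by construction, so \eqref{Eest} applies to them directly and summing gives \eqref{Eest2} for the constructed $h$; uniqueness (and hence the estimate for any solution) then follows from local uniqueness of the $h_i$, so one never needs to restrict a general global solution to a chart, and your caution about the cutoff-commutator alternative is well placed.
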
 
 
 \begin{proof}

  Given the previous local results, the proof of Theorem \ref{modglobexist} is essentially standard. We provide the details below for completeness. 
    
  Given $(M, g)$ as in Theorem \ref{modglobexist}, first choose an open cover $\{U_i\}_{i=1}^N$ of the corner $\Si$, where each $U_i$ is sufficiently 
small so that the local existence result Theorem \ref{modexist} holds for each $U_i$. Also, choose an open set $U_0$ in the interior $M\setminus \cC$ 
such that $(\cup_{i=1}^N U_i)\cup U_0$ also covers a tubular neighborhood of the initial surface, i.e. $S\times [0,t_0]$ for some time $t_0>0$.

Let $\{\w U_i\}_{i=0}^N$ be a thickening of the cover $\{U_i\}_{i=0}^N$, so that $U_i \subset \w U_i$, as described in \S 2.4. Let 
$\rho_i$ be a partition of unity subordinate to the cover $\{\w U_i\}_{i=0}^N$, so that ${\rm supp}\rho_i\subset \w U_i$, $\rho_i = 1$ on 
$U_i$ and  $\sum_i \rho_i=1$. Let 
$$\w \tau' =\big(F, (\g', \k', \nu', V_S')_S, (h_A, \eta,V_{\cC}')_{\cC} )\big),$$
be arbitrary $C^{\infty}$ data in $T(\w \cT^H)$ on $M$. Then the data $\rho_i \w \tau'$ has compact support in the sense of \S 2.1 in $\w U_i$ 
and by Theorem \ref{modexist} there exists a solution $h_i$ in $\w U_i$ satisfying 
\be \label{hitau}
D\w \Phi^H_g(h_i) = \rho_i \w \tau'. 
\ee
Here $h_i$ and $\w \tau'$ are rescaled (by the relevant factors of $\l$) back to the original coordinates and metric on $M$. For simplicity, we do not 
change the notation to reflect this. Moreover, as noted at the end of \S 2.4, by the finite propagation 
speed property, the solution $h_i$ has compact support in the sense of \S 2.4 for $t \in [0,t_i]$ for some $t_i > 0$. Thus, $h_i$ extends smoothly as the 
zero solution on $M_{t_i} \setminus \w U_i$. 

   Similarly, in the interior region $U_0 \subset \w U_0$, let $h_0$ be the solution to the Cauchy problem 
$$L(h_0) = \rho_0 F \ \mbox{ in } \w U_0, \ \ (g_S, K, \nu_S, V_g)'_h = \rho_0(\g', \k', \nu', V_S') \ \mbox{ on }S_0.$$
Then again $h_0$ has compact support in $\w U_0$ for some time interval $[0, t_0]$ and as above extends to $M_{t_0}$. We relabel so that 
$t_0$ is a common time interval for all $h_i$, $i \geq 0$. 

  The sum 
\be \label{sum}
h = \sum_{i=0}^N h_i,
\ee
is thus well-defined on $M_{t_0}$ and by linearity
\be \label{htau}
D\w \Phi_g^H(h) = \w \tau'.
\ee
By the local energy estimate in Theorem \ref{Eestthm}, the solution $h_i$ of \eqref{hitau} is 
unique, and hence $h$ is the unique smooth solution \eqref{htau}, since if $h'$ is any other solution of \eqref{htau}, one may apply the local decomposition 
\eqref{sum} in the same way. Moreover, the partition of unity property shows that \eqref{Eest} gives \eqref{Eest2}. 

  We note that the various $H^s$ norms in \eqref{Eest} scale in different ways with $\l$, but the factor $\l^{-3/2}$ is the dominant factor. 
Compared with \eqref{Eest}, the extra factor of $\l^{-1}$ comes from the difference in the scaling of $H^s$ and $H^{s+1}$ norms on the left and right in 
\eqref{tnorm}.  The smoothness statement follows from that in Theorem \ref{modexist}. 

\end{proof} 

  Next we pass from the shifted boundary data $(h_A, \eta)$ to the case of interest, Dirichlet boundary data. The analog of Theorem \ref{modglobexist} 
 also holds in this setting, with a slight further loss of derivative in the target space. As in Theorem \ref{Direxist}, we restrict to vacuum Einstein metrics $g$. 
  
  \begin{theorem}\label{Dirglobexist} 
  Let $(M, g)$ be a smooth globally hyperbolic vacuum Einstein metric in harmonic gauge, $M \simeq I \times S$, with compact Cauchy surface $S$ having 
compact boundary $\p S = \Si$ and with timelike boundary $(\cC, g_{\cC})$. Suppose $g$ satisfies the convexity Assumption $(*)$ at $\cC$. Then for any 
smooth $\tau' \in T_{\tau}(\cT^H)$, the equation 
\be \label{DDir}
D \Phi_g^H(h) = \tau'
\ee
has a unique $C^{\infty}$ solution $h$. The constructed solution $h$ depends smoothly on the data $\tau'$. 

Moreover, any $h$ satisfying \eqref{DDir} satisfies the tame energy estimate 
\be \label{DirEest}
||h||_{\cN^s(M)}  \leq C\l^{-3/2} ||D\Phi_g^H(h)||_{H^{s+1}(\cT^H)}. 
\ee
  \end{theorem}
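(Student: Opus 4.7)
The plan is to prove Theorem \ref{Dirglobexist} by bootstrapping from the global modified-boundary-data result (Theorem \ref{modglobexist}) in the same way that the local Theorem \ref{Direxist} was derived from the local Theorem \ref{modexist}. The strategy has four steps: (1) convert the Dirichlet target data $\tau'$ into an equivalent modified target datum $\w\tau'$; (2) solve the modified problem globally via Theorem \ref{modglobexist}; (3) correct the modified solution by a gauge variation $\d^* X$ to recover the prescribed Dirichlet boundary datum; (4) deduce the tame energy estimate \eqref{DirEest} and derive uniqueness from it.

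For step (1), given $\tau' = (F, (\g'_S, \k', \nu', V'_S), (\g'_{\cC}, V'_{\cC}))$ as in \eqref{tau'2}, Assumption $(*)$ guarantees $A$ is nowhere degenerate along $\cC$, so the pointwise decomposition $\g'_{\cC} = \g'_A + f'A$ with $\<f'A, A\> = \<\g'_{\cC}, A\>$ determines $\g'_A$ and $f'$ smoothly. The compatibility conditions of Lemma \ref{Compatibility} show that every jet $\p_t^k (h_{0\nu} + h_{\nu\nu})|_{\Si}$ is uniquely determined by $\tau'$; the Borel lemma then produces a smooth, compactly supported function $\eta'$ on $\cC$ realizing these jets. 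Setting $\w\tau' = (F, (\g'_S,\k',\nu',V'_S), (\g'_A, \eta', V'_{\cC}))$ gives an admissible element of $T_{\w\tau}(\w\cT^H)$.

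For steps (2)--(3), Theorem \ref{modglobexist} yields a unique smooth $\w h$ on $M$ solving $D\w\Phi_g^H(\w h) = \w\tau'$, with $\w h^{\tT} = \g'_A + \w f A$ for some $\w f$. I then construct a vector field $X$ on $M$ with $X|_{\cC} = (f' - \w f)\nu_{\cC}$, with $X = \p_t X = 0$ on $S$, and chosen to satisfy $V'_{\d^* X} = 0$ on $M$ (this last condition is arranged by solving the vector wave equation \eqref{V'eqn} for $X$ with the prescribed Dirichlet data on $\cC$ and zero data on $S$). The key compatibility check $\p_t^k(f' - \w f)|_{\Si} = 0$ for all $k$ follows exactly as in Theorem \ref{Direxist}: for $k = 0,1$ from Lemma \ref{fft}, and for $k \geq 2$ by iteratively applying $\p_t^{k-2}$ to the linearized Hamiltonian constraint \eqref{Hamlin2} along the corner, using that both $f'$ (via the built-in compatibility of $\tau'$) and $\w f$ satisfy this constraint. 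Setting $h := \w h + \d^* X$ yields $h^{\tT} = \g'_{\cC}$; since $g$ is vacuum Einstein in harmonic gauge, $L(\d^* X) = \cL_X \Ric_g + (\d^*)'_{\d^* X} V_g = 0$, and the initial data and boundary gauge data of $h$ coincide with those of $\w h$, giving $D\Phi_g^H(h) = \tau'$.

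For step (4), applying \eqref{Eest2} to $\w h$ bounds $\max_t \|\w h\|_{\bar H^s(S_t)}$ by $\l^{-1}\|\w\tau'\|_{H^s(\w\cT^H)}$; the terms $\|\g'_A\|_{H^{s+1}(\cC)}$ and $\|\eta'\|_{H^{s}(\cC)}$ are each bounded by $\|\g'_{\cC}\|_{H^{s+1}(\cC)}$ together with the initial and bulk data (for $\eta'$ through the compatibility conditions, whose $t$-derivatives involve one more tangential derivative of $\g'_{\cC}$), which accounts for the shift $s \to s+1$ on the target. The correction $\d^* X$ gains a derivative, $\|\d^* X\|_{\bar H^s(S_t)} \lesssim \|X\|_{\bar H^{s+1}(S_t)}$, and standard Dirichlet energy estimates (Lemma \ref{DirSomm}) for the vector wave equation satisfied by $X$ give $\|X\|_{\bar H^{s+1}(S_t)} \lesssim \|f' - \w f\|_{H^{s+1}(\cC)} \lesssim \|\g'_{\cC}\|_{H^{s+1}(\cC)} + \|\w h\|_{\bar H^s(S_t)}$. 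Combining these bounds yields \eqref{DirEest}, and uniqueness is then an immediate consequence applied to the difference of two solutions. The main obstacle is the precise derivative accounting in step (4) — verifying that the Borel extension $\eta'$ and the correction vector field $X$ can both be controlled in the claimed norms by only one extra derivative of $\g'_{\cC}$, and that the $\l^{-1}$ factor from the modified estimate is preserved through the gauge correction.
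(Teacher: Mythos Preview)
Your overall strategy matches the paper's: convert $\tau'$ to modified data $\w\tau'$, solve via Theorem \ref{modglobexist} to get $\w h$, correct by $\d^* X$ with $X|_{\cC} = (f' - \w f)\nu$, and deduce the estimate. However, two steps in your execution do not go through as written.

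\textbf{The $\w f$ bound.} In step (4) you assert $\|f' - \w f\|_{H^{s+1}(\cC)} \lesssim \|\g'_{\cC}\|_{H^{s+1}(\cC)} + \|\w h\|_{\bar H^s(S_t)}$. The $f'$ part is fine, but the implicit claim $\|\w f\|_{H^{s+1}(\cC)} \lesssim \|\w h\|_{\bar H^s(S_t)}$ fails by a derivative count: trace from $S_t$ to $\Si_t$ loses half a derivative, so $\|\w h\|_{\bar H^s(S_t)}$ controls at best $\|\w h^{\tT}\|_{H^{s-1/2}(\cC)}$, nowhere near $H^{s+1}(\cC)$. Moreover the modified estimate \eqref{Eest2} is \emph{not} boundary stable (cf.\ the remark after Proposition \ref{prophnua}), so there is no route through $\w h$ itself. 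The paper instead invokes the observation from the proof of Proposition \ref{prophnua} (via \eqref{eef}--\eqref{ee2.1}) that $\w f$, as the solution of the intrinsic wave equation \eqref{Hlin} along $\cC$, is bounded \emph{directly} by the target data: $\|\w f\|_{H^{s+1}(\cC)} \leq C\|\tau'\|_{H^{s+1}(\cT^H)}$. This is what closes the estimate for $X$.

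\textbf{Uniqueness.} Your final sentence deduces uniqueness by applying \eqref{DirEest} to the difference of two solutions. But at that point \eqref{DirEest} has only been established for the \emph{constructed} solution $h = \w h + \d^*X$; you have not shown it for an arbitrary solution of \eqref{DDir}, so the argument is circular. The paper proves uniqueness independently: given $\hat h$ with $D\Phi_g^H(\hat h) = 0$, it builds a vector field $Y$ satisfying $V'_{\d^*Y} = 0$, $Y = \p_t Y = 0$ on $S$, $Y^{\tT} = 0$ and $\eta_{\hat h + \d^*Y} = 0$ on $\cC$; then $\hat h + \d^*Y$ has zero \emph{modified} data, so $\hat h + \d^*Y = 0$ by Theorem \ref{modglobexist}, and one argues $Y = 0$ from $(\d^*Y)^{\tT} = 0$ and Assumption $(*)$. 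Only after uniqueness is established does \eqref{DirEest} hold for all solutions (since every solution is the constructed one).
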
 

 \begin{proof}

    Theorem \ref{modglobexist} gives the existence and tame energy estimates for the shifted linearized boundary data $([h]_A, \eta)$.  We show 
how this gives existence and tame energy estimates for the linearized Dirichlet boundary data $h^{\tT}$ in a manner analogous to the passage from 
Theorem \ref{modexist} to Theorem \ref{Direxist}.  

 As in the proof of Theorem \ref{Direxist}, given $\tau' \in T(\cT^H)$, let $\w \tau'$ equal $\tau'$ in all terms, except the boundary data for $\w \tau'$ is 
given by $(\g'_A, \eta')$ in place of $\g'$ from $\tau'$, where $\g' = \g'_A + f'A$ and $\eta'$ is uniquely constructed from the corner information of $\tau'$, 
as in the proof of Theorem \ref{Direxist}.  
 
 Let $\w h$ be the unique solution to $D\w \Phi_g^H(\w h) = \w \tau'$ given by Theorem \ref{modglobexist}. The estimate \eqref{Eest2} then gives 
$$||\w h||_{\cN^s(M)} \leq C\l^{-3/2}||\w\tau'||_{H^{s}(\w T^H)} \leq C\l^{-3/2}||\tau'||_{H^{s}( T^H)}.$$
A solution to \eqref{DDir} can then be constructed by setting $h=\w h+\d^* X$, where $X$ is given by the solution of \eqref{Xgauge}. As before, the 
Dirichlet boundary data of $X$ on $\cC$ is given by $(f'-\w f)\nu$, where $f'$ is part of the data $\tau'$ and $\w f$ is part of the solution $\w h$. By 
the Dirichlet energy estimate \eqref{DirE}, 
\bes
||X||_{\cN^{s+1}(M)}\leq C(||f'||_{H^{s+1}(\cC)}+||\w f||_{H^{s+1}(\cC)})\leq C||\tau'||_{H^{s+1}( T^H)}.
\ees
For the last inequality, we have applied the observation from the proof in Proposition \ref{prophnua} that the Dirichlet data $||\w f||_{H^{s+1}(\cC)}$ is 
bounded by $\w\tau'$ (and also by $\tau'$). Thus we have
\bes
||h||_{\cN^s(M)} \leq C(||\w h||_{\cN^s(M)} + ||X||_{\cN^{s+1}(M)})\leq C\l^{-3/2}||\tau'||_{H^{s+1}( T^H)}.
\ees

It remains to show uniqueness of solutions to \eqref{DDir}, so that any solution must equal to the solution constructed in Theorem \ref{Direxist} and hence 
satisfies the energy estimate above. Suppose $\hat h$ solves
\bes
D\Phi^H_g(\hat h)=0.
\ees
Let $Y$ be the vector field such that 
\be\label{eqtnY}
V'_{\d^* Y}=0 \ \mbox{ in }U, \ \ Y=\p_t Y=0 \mbox{ on }S, \ \ Y^\intercal =0, \ \ \eta_{\hat h+\d^* Y}=0 \ \mbox{ on }\cC. 
\ee
It is easy to check that this system is well-posed and admits a unique smooth solution. Furthermore, the tensor $\hat h+\d^* Y$ is now a zero solution 
for $D\w\Phi_g^H$, i.e. $D\w\Phi_g^H(\hat h+\d^*Y)=0$. It then follows from the energy estimate \eqref{Eest2} that $\hat h=-\d^* Y$. Since $h$ has zero 
Dirichlet data, we also have $(\d^* Y)^{\tT} =-h^{\tT}=0$, which further implies (using Assumption $(*)$) that the normal component $Y^1=\<Y,\nu_\cC\>$ of 
$Y$ must vanish. Combining this with \eqref{eqtnY}, we obtain $Y=0$ in $U$ and hence $\hat h=0$. This completes the proof of uniqueness.

\end{proof}
 
\begin{remark} \label{toe}
{\rm The time of existence $t^*$ of the linear solutions and energy estimates in Theorems \ref{modglobexist} and \ref{Dirglobexist} is of order $\l \sim \e$, 
\be \label{te}
t^* = O(\l),
\ee
where we recall $\l$ must be chosen sufficiently small to allow for the absorption of $\l$ terms as in \S 4. Of course $\l = \l(g)$.  
The reason for \eqref{te} is that the linear energy estimate \eqref{Eest} in the localized setting $\w g$ holds for a time interval $\w t^*$ of 
order $1$. Upon rescaling from $\w g$ back to $g$, this gives \eqref{te}. To extend the time $t^*$ to much longer times requires at least 
an understanding of the global-in-time stability behavior of the linearized solutions, which is not addressed here. 

  In the bound \eqref{DirEest}, both $\l$ and $C$ depend smoothly on $g$, although they depend on different aspects of $g$. For the nonlinear 
theory discussed next in \S 7, they are combined giving a single smooth function of $g$. 

}
\end{remark}

\section{Nash-Moser theory}
   
   The analysis in \S 6 shows that the linearized IBVP with Dirichlet boundary data is locally-in-time well-posed and has tame energy estimates on-shell, 
if one allows for a loss of derivatives for the boundary data. This is the setting of the Nash-Moser implicit function theorem, which we now apply to 
prove Theorem \ref{mainthm}. 

  Before starting the proof, we need the following result. Let 
$$\cT^H_0 = \{ (0, (\g, \k), \nu, 0, \g_{\cC}, 0) \} \subset \cT^H,$$
so $Q$ and the initial and boundary data of the gauge field in $\cT^H$ are set to zero. 

\begin{lemma} \label{T0}
$\cT^H_0$ is a smooth submanifold of $\cT^H$. 
\end{lemma}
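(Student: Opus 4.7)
The plan is to realize $\cT^H_0$ as the zero set of a smooth submersion from $\cT^H$ and invoke the implicit function theorem for Hilbert manifolds. Define the projection
\begin{equation*}
\pi: \cT^H \longrightarrow \cR, \qquad \pi(Q,\g_S,\k,\nu_S,V_S,\g_{\cC},V_{\cC}) = (Q,V_S,V_{\cC}),
\end{equation*}
where $\cR$ is the closed Hilbert subspace of $S^2(M) \times \bV_S \times \bV_{\cC}$ cut out by the corner compatibility $V_S|_\Si = V_{\cC}|_\Si$ and its higher-order analogs inherited from $\cU_c$. By definition $\cT^H_0 = \pi^{-1}(0)$, so it suffices to show that $\pi$ is a smooth submersion at each point of $\cT^H_0$.

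Fix $p = (0,\g_S,\k,\nu_S,0,\g_{\cC},0) \in \cT^H_0$ and an arbitrary $(Q',V_S',V_{\cC}') \in T_0 \cR$; the goal is to build a preimage in $T_p\cT^H$. First, I apply the submersion arguments already used in the proofs of Lemma \ref{Man1} and Lemma \ref{Compatibility}: at the locus $V_S = V_{\cC} = 0$, $Q = 0$, the compatibility maps $W^1, W^k$ of \eqref{CompOp1}, \eqref{CompOp} linearize to a triangular system in $(\g_S', \g_{\cC}', \nu_S', \k')$ whose right-hand side depends linearly on the prescribed $(Q',V_S',V_{\cC}')$. This determines the jets of $(\g_S', \k', \nu_S', \g_{\cC}')$ at $\Si$ to every order, and these jets may be extended smoothly off the corner. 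Second, by the proof of Proposition \ref{manifold}, the linearized constraint operator $DC$ is algebraic with invertible leading part in $(\p_t V_S)'$, so $(\p_t V_S)'$ is uniquely determined by the previous choices. The resulting tuple lies in $T_p\cT^H$ and projects to $(Q',V_S',V_{\cC}')$, establishing surjectivity of $D\pi$.

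Since $\Ker D\pi$ is a closed subspace of the Hilbert space $T_p \cT^H$, it admits a closed topological complement. The implicit function theorem for Hilbert manifolds then yields that $\cT^H_0 = \pi^{-1}(0)$ is a closed Hilbert submanifold of $\cT^H$. The main technical point is verifying that the triangular structure used in the proof of Lemma \ref{Compatibility} persists after linearization at the degenerate locus $V = 0, Q = 0$; this is transparent by inspection of \eqref{CompOp1} and \eqref{CompOp}, since the dependence of $\chi^a_{k-1}, \xi^a_k, \zeta_k$ on $V$ and $Q$ is smooth and the vanishing base values introduce no new obstruction to freely prescribing $(Q', V_S', V_{\cC}')$.
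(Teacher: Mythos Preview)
Your approach is correct but takes a genuinely different route from the paper. You absorb the linearized constraint by solving algebraically for $(\partial_t V_S)'$, exploiting that $DC$ has invertible leading part in this variable (the $\delta^*V(\nu_S,\cdot)$ terms in \eqref{Const} contribute a diagonal coefficient matrix in $\partial_t V'$). The paper instead fixes everything and absorbs the constraint by adjusting $(\gamma_S',\kappa')$: it reduces to showing the \emph{ungauged} constraint operator \eqref{Const2} is a submersion, which requires the nontrivial argument that $\Ker DC^* = 0$ via Moncrief's characterization of Killing initial data together with unique continuation from $\Sigma$. Your route is more elementary and avoids this PDE input entirely; the paper's route establishes the stronger fact that the vacuum constraint map itself is a submersion on manifolds with boundary, which is of independent interest.

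Two small points on your write-up. First, your reference ``by the proof of Proposition~\ref{manifold}'' is slightly off: that proof shows $DC$ is surjective by varying $Q'$, not $(\partial_t V_S)'$; your observation about invertibility in $(\partial_t V_S)'$ is correct but is a separate (easy) computation from \eqref{Const}. Second, your Step~1 claim that the corner compatibility can be satisfied by adjusting only $(\gamma_S',\kappa',\nu_S',\gamma_{\cC}')$ with $(Q',V_S',V_{\cC}')$ prescribed is true but not quite ``transparent by inspection'': the proofs of Lemmas~\ref{Man1}--\ref{Compatibility} establish submersion by varying $V_S'$ and $V_{\cC}'$, which you are now holding fixed. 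One must instead check that the linearized $\chi^a$ in \eqref{Va} and the higher-order analogs are surjective as functions of the remaining free jets (e.g.\ $\partial_1\nu_S'$, $\partial_1\gamma_S'$); this holds since these enter with nondegenerate coefficients via the $\partial_\mu g^{\mu a}$ terms in \eqref{Vg}, but it warrants a line of explanation.
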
 

\begin{proof}
The proof is similar to that of Proposition \ref{manifold}. We work with $H^s$ spaces of finite differentiability but will neglect this in the notation. 
Consider then the map 
$$Z: \cT^H \to Sym^2(M) \times \bV(S) \times \bV(\cC),$$
\be \label{Zsub} 
\begin{split}
&Z(Q, \g_S, \k, V_S, \nu_S, \g_{\cC}, V_{\cC}) = 
\begin{cases}
Q \\
V_S \\
V_{\cC}.\\
\end{cases}
\end{split}
\ee
As before, it suffices to prove that $Z$ is a submersion, i.e.~the derivative $DZ$ is surjective, on $\cT_0^H$. Recall from Proposition \ref{manifold} that 
$$T(\cT^H) = Ker DC,$$
where $C$ is the constraint map in \eqref{Const}. To see that arbitrary $(Q', V'_S, V'_{\cC})$ may be chosen within $Ker DC$, it suffices then to show that 
the usual, non-gauged constraint operator 
\be \label{Const2}
\begin{split}
& C(\g_S, \k) = 
\begin{cases}
|\k|^2 - (\tr_{\g_S}\k)^2  - R_{\g_S} \\
{\rm div_{\g_S}}  \k - d_S(\tr_{\g_S}\k)  \\
\end{cases}
\end{split}
\ee
is a submersion. Now if $DC$ in \eqref{Const2} is not surjective, choose a vector field $X$ which is $L^2(S)$ orthogonal to ${\rm Im} \, DC$. Integrating by parts gives then 
$$0 = \int_S\< DC(\g_S', \k'), X\> = \int_S \<(\g_S',\k'), DC^*X\> + \int_{\Si} \b(\g_S',\k')(X),$$
where the boundary term $\b(\g_S',\k')$ consists of zero and first order differential operators on $X$ at $\Si$, with coefficents depending on $(\g_S',\k')$. 
Since $(\g',\k')$ may be chosen arbitrarily, this implies first that $DC^*X = 0$ on $S$. This is an overdetermined elliptic operator on $X$; writing $X = 
X_S + u\nu_S$, $DC^*X$ is first order in $X_S$ and second order in $u$. A well-known result of Moncrief \cite{Mon} implies that 
$X$ is a Killing field of the solution of the Cauchy problem generated by $(\g, \k)$. Moreover, since $(\g_S', \k')$ are arbitrary at $\Si$, it is 
straightforward to check by inspection that, $X_S = 0$ and $u = \p_1 u = 0$ on $\Si$. It then follows from well-known unique continuation results, cf.~\cite{Ar}, 
that $X = 0$ on $S$, which completes the proof. 

\end{proof} 

  We are now in position to complete the proof of Theorem \ref{mainthm}. All data below are assumed to be $C^{\infty}$; for basic details regarding Frechet manifolds, 
tame maps and the Nash-Moser theorem we refer to \cite{Ham}, \cite{Z2}. 

\begin{proof}

Form the map 
$$\Psi: Met^*(M)\times \cT_0^H \to \cT^H ,$$
\be
\Psi(g, \tau) = (\Ric_g + \d_g^* V, (g_S, K_S, \nu_S, V|_S), g_{\cC}, V|_{\cC}) - \tau, 
\ee
$\tau \in \cT_0^H$. Here $Met^*(M) \subset Met(M)$ is the space of metrics defined on $[0, t^*)\times S$, where $t^*$ is the time-of-existence function 
given as in Remark \ref{toe}. 

By Proposition \ref{manifold} and Lemma \ref{T0}, the domain and target spaces of $\Psi$ are smooth tame Frechet manifolds and it is easy to verify that 
$\Psi$ is a smooth tame mapping between tame Frechet manifolds. Let $\pi: Met^*(M) \times \cT_0^H \to Met^*(M)$ be the projection onto the first factor 
and let 
$$\bZ = \pi(\Psi^{-1}({\bf 0})).$$
This is the space of Lorentz metrics $g$ on $M = [0, t^*)\times S$ with $Q  = 0$ on $M$ and $V = 0$ on $S \cup \cC$. Since then $\Ric_g + \d^*V = 0$, 
it follows from Remark \ref{gaugerem} that $V = 0$ on $M$ and thus 
$$\bZ = \bE_0$$
is the space of smooth vacuum Einstein metrics in harmonic gauge, $V = 0$ on $M$. 

   Let $\bZ^* = \bE_0^*$ be the open subset of metrics satisfying Assumption $(*)$. Theorem \ref{Direxist} shows that $D_1 \Psi = D\Phi^H$ is surjective 
on $\bE_0^*$, while Theorem \ref{Dirglobexist} shows that $D_1 \Psi$ is injective on $\bE_0^*$. The energy estimate \eqref{DirEest} shows that 
$D_1\Psi$ has a tame inverse and hence $D_1\Psi$ is a tame isomorphism, i.e.~$D_1 \Psi$ is a tame isomorphism on-shell. 

  It is well-known however that this is not yet sufficient to apply the Nash-Moser theorem. One needs in addition an approximate inverse for $D_1 \Psi$ for any 
metric $g$ close to a metric in $\bE_0^*$. 

    Let then $g^0$ be any given vacuum Einstein metric $g^0$ in harmonic gauge, so $V_{g^0} = 0$ on $M$ as above and satisfying Assumption $(*)$ at $\cC$. Let 
 $\tau^0 = \Phi^H(g^0)$. Consider the linearization $D\Phi_g^H$ for $g$ in a neighborhood $\cU$ of $g^0$ in $Met^*(M)$, with $g$ satisfying Assumption $(*)$. 
 Given an arbitrary $\tau' \in T(\cT^H)$, recall that \eqref{error} gives the existence of $h \in T_g(Met^*(M))$ and a vector field $X$ with $V'_{\d^*X} = 0$, such that 
\be\label{error2}
D\Phi_g^H(h) = \tau' +(L(\d^* X), 0,...,0) = \tau' +(\cL_X \Ric_g+(\d^*)'_{\d^* X}V_g, 0,...,0).
\ee
Recall from its definition in \eqref{Xgauge} that $X$ is uniquely determined by $\tau'$. This defines the map 
$$\bar I:T(\cT^H) \to T(Met^*(M)$$
$$\bar I(\tau') = h.$$
The energy estimates from Theorem \ref{modglobexist} and \ref{Dirglobexist} imply that $\bar I$ is a smooth, tame Fredholm map. As in \eqref{Xgauge}, 
$X$ vanishes to first order on $S$ and has Dirichlet boundary data determined by $\tau'$. Hence by the energy estimate \eqref{DirE},  
\be \label{Xbnd}
||X||_{\cN^s} \leq C ||\tau'||_{H^s(\cT^H)},
\ee
for a fixed constant $C = C(g)$, $g \in \cU$. Further, since $V'_{\d^*X} = 0$, $L(\d^*X) = \cL_X \Ric_g+(\d^*)'_{\d^* X}V_g = \cL_X(\Ric_g + \d^*V_g) = 
\cL_X Q - \d^*\cL_X V$. We then have  
$$||\cL_X Q||_{\cN^s} \leq C||\cL_X(\Psi(g, \tau))||_{H^s(\cT^H)} \leq C||\Psi(g, \tau)||_{H^{s+1}(\cT^H)} ||X||_{\cN^{s+1}},$$
and also
$$||\d^* \cL_X V||_{\cN^s}  \leq C||\Psi(g, \tau)||_{H^{s+2}(\cT^H)} ||X||_{\cN^{s+2}},$$
By construction, $\bar I$ then satisfies the estimate   
\be \label{quaderror}
||D\Phi_g^H\circ \bar I(\tau')-\tau'||_{H^s(\cT^H)} \leq C||\Psi(g, \tau)||_{H^{s+2}(\cT^H)} ||\tau'||_{H^{s+3}(\cT^H)},
\ee
Similarly, 
$$||\bar I \circ D\Phi_g^H(h) - h||_{\cN^s} \leq C||\Psi(g, \tau)||_{H^{s+2}(\cT^H)}  ||h||_{\cN^{s+3}}.$$
  
   Thus, $\bar I$ is an approximate inverse with quadratic error. The Nash-Moser implicit function theorem \cite[Thms.~3.3.1, 3.3.3]{Ham}, or \cite[Thm.~6.3.3]{Z2}, 
then gives the existence of a neighborhood $\cW$ of $\tau_0 \in \cT_0^H$ and a smooth tame map $B:  \cW \to Met^*(M)$, such that 
$$\Psi(B(\tau), \tau) = 0,$$
for any $\tau \in \cW$. This exhibits a neighborhood of $g_0$ in $\bE_0^*$ as the graph of $B$ over $\cW$. In particular, this shows that $\bE_0^*$ is a smooth
Frechet manifold and 
$$\Phi^H: \bE_0^* \to \cT_0^H,$$
is everywhere a smooth, tame local diffeomorphism between Frechet manifolds. 

  Next, as noted in Remark \ref{gaugerem}, any metric in $\bE^*$ can be brought into harmonic gauge by a unique diffeomorphism $\f \in \Diff_1(M)$. 
The action of the group $\Diff_1(M)$ on $\bE^*$ is free, smooth and tame and hence the quotient $\hat \cE^* = \bE^* / \Diff_1(M)$ is again a smooth 
tame Frechet manifold, cf.~\cite{Ham}. On $\hat \bE$, the map $\Phi^H$ agrees with the map $\hat \Phi$ from \eqref{hatPhi} so 
that 
$$\hat \Phi: \hat \cE^* \to [\cI_0 \times Met(\cC) \times \bV']_c,$$
is again a local diffeomorphism between smooth Frechet manifolds. Thus for any given data $\tau \in [\cI_0 \times Met(\cC) \times \bV']_c$, there 
is a solution $g \in \cE^*$. As in Remark \ref{toe}, the solution $g$ exists for a time interval $t^*$ depending on the data $\tau$ and $\hat \Phi(g) = 
\tau$ when the boundary data in $\tau$ is suitably restricted. 
     
       Lastly, the action of the gauge group ${\rm Diff}_0(M)$ on $\hat \cE^*$ is free, smooth and tame and hence the quotient $\cE^* = 
\hat \cE / {\rm Diff_0(M)}$ is again a smooth tame Frechet manifold. The map $\hat \Phi$ descends to the quotient and it follows that the induced map 
\be \label{lastPhi}
\Phi: \cE^* \to \cI_0 \times_c Met(\cC),
\ee
is everywhere a tame local diffeomorphism between smooth Frechet manifolds. 

  To conclude the proof, we show that $\Phi$ in \eqref{lastPhi} is not only locally one-to-one but in fact globally one-to-one. To see this, suppose $g_1, g_2$ are two vacuum 
Einstein metrics with the same gauged target data, i.e.~$\Phi^H(g_1) = \Phi^H(g_2)$. If $g_1 \neq g_2$, then there exists an arbitrarily small neighborhood 
$U$ of some $p \in \Si$ such that $g_1 \neq g_2$ in $U$. One may then perform a localization or rescaling of each $g_i$ as in \S 2.4 so that the almost 
constant coefficient metrics $\w g_i$ satisfy $\w g_1 \neq \w g_2$ in $U$. However, by \eqref{eps}, the metrics $\w g_i$ satisfy 
$$|| \w g_2 - \w g_1||_{C^{\infty}} \leq \e,$$
for any prescribed $\e > 0$. It then follows from the local uniqueness iabove that 
$$\w g_2 = \w g_1$$
in some $U' \subset U$. It then follows that $g_2 = g_1$ in $U'$, which proves the global uniqueness. This completes the proof of Theorem \ref{mainthm}.

\end{proof}

\begin{remark} \label{findiff} 
{\rm We have phrased all of the discussion above in the $C^{\infty}$ context. However, Theorem \ref{mainthm} (and all other results) may be extended 
to Sobolev spaces $H^s$ of finite differentiability. Thus, as discussed in \S 2, the maps $\Phi$, $\hat \Phi$ and $\Phi^H$ are well-defined smooth maps 
between a scale of Banach manifolds, 
$$\Phi^H: Met^s(M) \to (\cT^H)^s,$$
indexed by $s$. The existence in Theorem \ref{Direxist} and energy estimates in Theorem \ref{DirEest} hold also for suitable choices of $s$ and 
for suitable choices of $H^s$ smoothness of the background metric $g$. One may then apply the finite derivative version of the Nash-Moser inverse 
function theorem due to Zehnder \cite{Z1}, \cite[Theorem 6.3.3]{Z2};  in the notation used there, the loss of derivative is $\g = 4$. 
However, we will not carry out here the details that the hypotheses of \cite[Theorem 6.3.3]{Z2}, do hold. 

}
\end{remark}

 \section{Concluding Remarks}

  We conclude this work with several remarks concerning the breakdown of the well-posedness in Theorem \ref{mainthm} when Assumption $(*)$ is 
 dropped. First however we discuss some of the difficulties that occur even in the simple case of $2+1$ dimensions. 
 
   To begin, recall from the discussion in \S 1 that a vacuum metric $g$ is Ricci-flat and so flat in $2+1$ dimensions. Assuming the domain $M \simeq I\times S$ 
is also simply connected, the developing map gives an immersion 
\be \label{emb}
(M, g) \to \bR^{1,2}.
\ee
It is easy to see that $F$ is an embedding, so $M$ may be viewed as a domain in Minkowski space $\bR^{1,2}$ with timelike boundary $\cC$. The Dirichlet 
IBVP in $2+1$ dimensions is thus closely related to the problem of existence and uniqueness of isometric embedding of timelike surfaces $(\cC, \g)$ into 
$\bR^{1,2}$. 

\begin{remark}
{\rm 
There is a major difference in the behavior of the extrinsic curvature between surfaces in $\bR^3$ and timelike surfaces in $\bR^{1,2}$. For a surface in $\bR^3$, 
the second fundamental form operator $A$ (aka Weingarten map) is self-adjoint with respect to the induced Riemannian metric $\g$ on the surface, and so 
$A$ may be diagonalized by an orthonormal basis with respect to $\g$, giving the principal curvatures.  Consequently, for instance $K > 0$ implies 
$H = \tr_{\g}A > 0$ (with respect to the usual orientation). 

  However, this is not the case for Lorentz metrics; $A$ may or may not be diagonalizable with respect to $\g$, cf.~\cite{Lop} 
for a general discussion. For instance, there are examples of Lorentz metrics on $\cC \subset \bR^{1,2}$ with $K_{\g} > 0$ but $H = 0$ 
(timelike minimal surfaces). 
}
\end{remark}

   We note the following: 
 
 \begin{lemma} \label{21}
In $2+1$ dimensions, the Assumption $(*)$ implies that the Gauss curvature $K_{g_{\cC}}$ is positive:  
\be \label{K>0}
K_{g_{\cC}} > 0.
\ee
\end{lemma}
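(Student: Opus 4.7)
The plan is to reduce \eqref{K>0} to a direct comparison of the determinants of $g_{\cC}$ and of $\Pi$ (as bilinear forms), combined with the Gauss equation along $\cC$ in the vacuum setting. The key observation is that in two dimensions the map $\Pi = Hg_{\cC}-A$ acts on the tangent bundle of $\cC$ in a way that is essentially dual to $A$, so their (1,1)-determinants agree.

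First I would raise an index: let $\Pi^{\sharp}$ and $A^{\sharp}$ denote the Weingarten-type endomorphisms $(\Pi^{\sharp})^i{}_j = g_{\cC}^{ik}\Pi_{kj}$ and $(A^{\sharp})^i{}_j = g_{\cC}^{ik}A_{kj}$. Then $\Pi^{\sharp} = H\,\mathrm{id} - A^{\sharp}$, and since $\dim \cC = 2$ with $\mathrm{tr}\,A^{\sharp} = H$, a direct computation gives
\[
\det \Pi^{\sharp} \;=\; H^2 - H\,\mathrm{tr}(A^{\sharp}) + \det A^{\sharp} \;=\; \det A^{\sharp}.
\]
Next I would invoke the Gauss equation along the timelike hypersurface $\cC$ stated in \S 3.1, namely $|A|^2 - H^2 + R_{g_{\cC}} = R_g - 2\,\Ric(\nu,\nu)$. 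In $2+1$ vacuum, all Ricci (hence scalar) curvatures of $g$ vanish, so $R_{g_{\cC}} = H^2 - |A|^2$. Combined with the standard $2$D identity $H^2 - |A|^2 = 2\det A^{\sharp}$ and $R_{g_{\cC}} = 2K_{g_{\cC}}$, this yields
\[
K_{g_{\cC}} \;=\; \det A^{\sharp} \;=\; \det \Pi^{\sharp}.
\]

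Finally, I would translate Assumption~$(*)$ into a sign statement. Since $g_{\cC}$ is Lorentz of signature $(1,1)$, in any local coordinates $\det(g_{\cC,ij})<0$; since $(*)$ asserts that $\Pi$ is also Lorentz of signature $(1,1)$, $\det(\Pi_{ij})<0$ as well. Therefore
\[
K_{g_{\cC}} \;=\; \det \Pi^{\sharp} \;=\; \frac{\det(\Pi_{ij})}{\det(g_{\cC,ij})} \;>\; 0,
\]
which is the required \eqref{K>0}.

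I do not anticipate any serious obstacle: the whole argument is a short dimension-$2$ linear-algebra identity together with the vacuum Gauss equation. The only point to be slightly careful about is the sign in the Gauss equation for a \emph{timelike} hypersurface (the spacelike unit normal contributes $\epsilon=+1$, consistent with the formulation in \S 3.1), and the fact that the identity $\det\Pi^{\sharp}=\det A^{\sharp}$ does not require $A^{\sharp}$ to be diagonalizable with respect to $g_{\cC}$ — this matters because, as the paper emphasizes, $A$ need not be diagonalizable in the Lorentz setting.
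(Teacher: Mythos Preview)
Your proof is correct and follows essentially the same route as the paper: both reduce to the $2\times 2$ identity $\det(H\,\mathrm{id}-A^{\sharp})=\det A^{\sharp}$ (since $\tr A^{\sharp}=H$), combine it with the Gauss equation / Theorema Egregium to identify $\det A^{\sharp}$ with $K_{g_{\cC}}$, and then read off the sign from Assumption~$(*)$. Your version is arguably a bit more careful in that you work directly with the $(1,1)$ endomorphisms and coordinate determinants, which sidesteps any ambiguity about what ``orthonormal basis'' and ``positive definite'' mean in Lorentz signature; the paper's phrasing on that point is slightly loose, but the underlying computation is the same as yours.
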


\begin{proof}
Choose an orthonormal basis $\{e_{a}\}$ for $g_{\cC}$. The matrix of $g_{\cC}$ in this basis is the identity $I$ so the matrix $\Pi_{g_{\cC}}$ of $\Pi$ in 
this basis is $HI - A_{g_{\cC}}$, where $A_{g_{\cC}}$ the matrix of $A$. Then 
$$det \Pi_{g_{\cC}}  = det(HI - A_{g_{\cC}} ) = H^2 - (\tr A_{g_{\cC}})H + det A_{g_{\cC}} = det A_{g_{\cC}}.$$
The Assumption $(*)$ implies the matrix $\Pi_{g_{\cC}}$ is positive definite, so that $det \Pi_{g_{\cC}} > 0$. The result then follows from Gauss' Theorema 
Egregium.

\end{proof} 

  Note that the same proof using the Gauss equation (Hamiltonian constraint) in place of the Theorema Egregium gives the following. For 
$$Ric - \frac{R}{2}g + \Lambda g = 0$$
with $\Lambda = -1$ or $\Lambda = +1$, (anti-deSitter or de Sitter spacetimes), Assumption $(*)$ implies 
$$K_{g_{\cC}} > -1, \ {\rm or} \  K_{g_{\cC}} > 1,$$
respectively. 

  However, the converse of Lemma \ref{21}, i.e.~whether Assumption $(*)$ follows from \eqref{K>0}, does not hold in general. There are many examples 
of timelike surfaces $(\cC, \g)$ in $\bR^{1,2}$ with $K_{\g} > 0$ but where Assumption $(*)$ fails, cf.~again \cite{Lop} for example. 

  If $\Pi$ is diagonalizable with respect to $g_{\cC}$, then it is easy to see as in Lemma \ref{21} that $K_{g_{\cC}} > 0$ implies Assumption $(*)$. The condition 
$$H^2 > 4K_{g_{\cC}},$$
ensures diagonalizability, but this condition remains extrinsic to the boundary geometry. Thus even in $2+1$ dimensions, it is not clear whether Assumption $(*)$ 
can be effectively related to an intrinsic condition on the boundary metric. 

\medskip 

\begin{remark} \label{flip}
{\rm We include here a remark regarding the orientation discussion in Lemma \ref{alpha}. Let $\cC = I\times S^1$ be an embedded timelike surface in $\bR^{1,2}$ 
bounding $M \simeq I\times S$, where $S$ is a spacelike disc. (The discussion below applies to all dimensions). Consider the Cauchy surface $S_0 = M\cap \{t = 0\}$ in 
$\bR^{1,2}$. Since $S_0$ is totally geodesic, $K_{S_0} = 0$, so that $\tr_{g_{S_0}} K_{S_0} = 0$ and hence $\a$ is uniquely determined as in \eqref{vel}. 

  Suppose now the boundary $\cC$ meets $S_0$ at the corner orthogonally so $\a = 0$. Keeping the boundary fixed, consider a distinct Cauchy surface $S^+$ for $M$, 
with $\p S^+ = \p S_0 = \Si$, with $\a_{S^+}$ given arbitrarily. Let $M^+$ be the domain (i.e.~the vacuum solution) to the future of $S^+$. The reflection of $M^+$ 
through the plane $\{t = 0\}$ gives another domain $M^-$ to the past of the Cauchy surface $S^-$. The domains $M^+$ and $M^-$ are isometric via the isometric 
reflection through $\{t = 0\}$ and have the same induced initial and boundary data, but $\a_{S^-} = -\a_{S^+}$.  

  We also note that Theorem \ref{mainthm} and the other results above hold equally well with respect to the `exterior' boundary value problem, where for instance 
the Cauchy surface $S$ is the complement of a bounded domain in $\bR^n$. In fact the decomposition \eqref{equiv} is invariant under the sign or orientation 
change $(f, A) \to (-f, -A)$ and it is easy to verify that the proof of Theorem \ref{mainthm} remains the same under such a change. 

}
\end{remark}

  Next we discuss some examples of the breakdown of well-posedness when Assumption $(*)$ is dropped. 
  
   \smallskip 

$\bullet$ Let $(M, g)$ be a vacuum Einstein metric with $A = 0$ on some (small) open set $U \subset \cC$ with $U \cap \Si = \emptyset$. Let $f: U \to \bR$ be 
any function of compact support in $U$ and let $X$ be a vector field on $M$ with $X = f\nu_{\cC}$ on $\cC$ and with (small) compact support, disjoint from $S$, 
near $U$ in $M$. Then the infinitesimal vacuum deformations $h = \d^*X$ satisfy $h^{\tT} = fA = 0$ on $U$. Since the Cauchy data on $S$ are unchanged by such 
deformations $h$, this shows that $Ker D\Phi$ is infinite-dimensional at such $(M, g)$. Thus there is an infinite-dimensional indeterminacy or non-uniqueness at 
the linear level. 

  A related non-uniqueness also holds at the non-linear level. Namely, suppose $f < 0$ above and for $s > 0$ small, let $G_{sf} = \{\exp_x (sf(x)\nu_{\cC}): x \in U\} \subset M$ 
 be the graph of $sf: U \to \bR$ by the exponential map normal to $\cC$ in $(M, g)$. Let $V_{sf}$ be the open set in $M$ bounded by $U$ and $G_{sf}$ and let $(M^-,g^-) = 
 M \setminus V_{sf}$. Thus $\cC$ has been pushed inward by a small bump. Now let $(M^+, g^+)$ be the reflection of $(M^-, g^-)$ through the wall 
 $U \subset \cC$. Then the timelike boundaries $\cC^{\pm}$ of $M^{\pm}$ are isometric, i.e.~have the same Dirichlet boundary data and also have the same 
 Cauchy data on $S$. However, $M^{\pm}$ are not isometric. This shows a breakdown of uniqueness and so well-posedness for a large class of boundary data. 
 
    The discussion above is an exact analog of the same phenomena that occur for the isometric embedding problem for surfaces in $\bR^3$, cf.~\cite{Sp}. 
We expect that the complicated behaviors possible in the isometric embedding of surfaces in $\bR^3$ lead to similar complications for the 
Dirichlet IBVP in three and higher dimensions. 

\smallskip 
 
$\bullet$ Next we discuss the more non-trivial examples presented in \cite{BS} and \cite{AGAM}. Consider the (flat) Rindler vacuum spacetime
$$g_R = -z^2 dt^2 + z^2 + g_{\bR^2},$$
where $g_{\bR^2}$ is the Euclidean metric on $\bR^2$. Choose the timelike surface $\cC = \{z = z_c\}$ for some fixed $z_c > 0$ and Cauchy slice 
$S = \{t = 0\}$. For this background solution, both an explicit obstruction to linearized existence and an explicit infinite dimensional space of 
solutions in $Ker D\Phi_{g_R}$ are identified in \cite{AGAM}. To see that Assumption $(*)$ does not hold, we have $A = 
\frac{1}{2}\cL_{\p_z}g_R = -z_c dt^2 \neq 0$. However, it is easy to see that the Brown-York stress tensor $\Pi$ is degenerate, so Assumption $(*)$ fails. 
Similar remarks hold for the near-horizon (Schwarzschild-type) solutions discussed in \cite{BS}, \cite{AGAM}.

\section{Appendix} 

  In this Appendix, we provide the detailed proofs of several statements not yet proved in the main text. 
  
    We begin with the proof of \eqref{com2nu1}, i.e.
$$\p_1^2\nu_S^1=\xi_2^1(\p_0 V_1,\p_1 V_0,\p_S\p_\Si \nu_S, \p_S K, \p_S^2 g, \p g).$$

\begin{proof}
First recall \eqref{Vgf}, i.e.   
\be\label{Vgf2}
V_\a:=g_{\a\b}V^\b=-g^{\rho\s}\p_\rho g_{\s\a}+\tfrac{1}{2}g^{\rho\s}\p_\a g_{\rho\s}
\ee
Setting $\a=1$ and taking the $\p_0$ derivative gives  
\bes
\begin{split}
\p_0V_1&=-g^{\rho\s}\p_0\p_\rho g_{\s1}+\tfrac{1}{2}g^{\rho\s}\p_0\p_1 g_{\rho\s}+O_1(g)\\
&=-g^{00}\p_0\p_0 g_{01}-g^{11}\p_0\p_1 g_{11}-g^{AB}\p_0\p_A g_{B1}+\tfrac{1}{2}g^{00}\p_0\p_1 g_{00}+\tfrac{1}{2}g^{11}\p_0\p_1 g_{11}+\tfrac{1}{2}g^{AB}\p_0\p_1 g_{AB}+O_1(g)\\
&=\p_0\p_0 g_{01}-\tfrac{1}{2}\p_0\p_1 g_{00}-\tfrac{1}{2}\p_0\p_1 g_{11}-g^{AB}\p_0\p_A g_{B1}+\tfrac{1}{2}g^{AB}\p_0\p_1 g_{AB}+O_1(g)\\
\end{split}
\ees
Since $2K=\cL_{\nu_S}g$ by definition, we have 
\bes
2K_{ij}=\nu_S^\a\p_\a g_{ij}+\p_i \nu_S^\a g_{\a j}+\p_j \nu_S^\a g_{\a i}.
\ees
It further implies that along the corner:
\bes
\p_0\p_1 g_{11}=2\p_1 K_{11}-2\p_1^2\nu_S^1+O_1(g), 
\ees
as well as $\p_0\p_A g_{B1}=2\p_A K_{B1}+O_1(g)+O(\p_S\p_\Si \nu_S)$, $\p_0\p_1 g_{AB}=2\p_1 K_{AB}+O_1(g)+O(\p_S\p_\Si \nu_S)$ 
where $O(\p_S\p_\Si \nu_S)$ involves up to 2nd order spacelike derivatives of $\nu_S^\a$ except for $\p_1^2 \nu_S^\a$.
In summary, we have 
\bes
\p_0 V_1=\p_0\p_0 g_{01}-\tfrac{1}{2}\p_0\p_1 g_{00}+\p_1^2\nu_S^1+O(\p K,\p_S\p_\Si \nu_S)+O_1(g). 
\ees
As for the term $\p_0\p_1 g_{00}$ above, set $\a=0$ in \eqref{Vgf2} and take the $\p_1$ derivative to obtain 
\bes
\begin{split}
\p_1V_0&=-g^{\rho\s}\p_1\p_\rho g_{\s0}+\tfrac{1}{2}g^{\rho\s}\p_1\p_0 g_{\rho\s}+O_1(g)\\
&=\tfrac{1}{2}\p_1\p_0 g_{00}-\p^2_1 g_{10}+\tfrac{1}{2}\p_1\p_0 g_{11}-g^{AB}\p_1\p_A g_{B0}+\tfrac{1}{2}g^{AB}\p_1\p_0 g_{AB}+O_1(g)\\
&=\tfrac{1}{2}\p_1\p_0 g_{00}+O(\p K,\p_S\p_\Si \nu_S)+O_1(g)\\
\end{split}
\ees
where we have applied the previous equations to derive that $-\p^2_1 g_{10}\sim \p_1^2\nu_S^1$ and $\tfrac{1}{2}\p_1\p_0 g_{11}\sim \p_1^2\nu_S^1$ cancel with each other.
Thus we conclude that
\bes
\p_0\p_0 g_{01}=\p_0 V_1+\p_1 V_0+O(\p K,\p_S\p_\Si \nu_S)+O_1(g)
\ees
Combining this with \eqref{b01} gives \eqref{com2nu1}. 

\end{proof}
  
 Next we prove \eqref{Lower1} in the following two Lemmas. 
    
 \begin{lemma}\label{Xlemma}
For any vector field $X$ along $\cC$ and $\d > 0$, 
\be \label{Xest}
\max_{t \in [0,1]} \int_{\cC_t} \<\nabla_{\p_t} h(\nu)^{\Si}, X\> \leq \d \max_{t \in [0,1]}E_t ^{\nu}+ \d^{-1} ||X||_{H^1(\cC)}^2.
\ee 
Consequently 
\be \label{Xest1}
\max_{t \in [0,1]} \int_{\cC_t} \<\nabla_{\p_t} h(\nu)^{\Si}, (\b_{\cC}h_A)^{\Si}\> \leq \d \max_{t \in [0,1]}E_t ^{\nu}+ \d^{-1} B, 
\ee 
and 
\be \label{Xest2}
\max_{t \in [0,1]} \int_{\cC_t} \<\nabla_{\p_t} h(\nu)^{\Si}, f(\d \hat A + d\hat H)^{\Si}\> \leq \d \max_{t \in [0,1]}E_t ^{\nu}+ \d^{-1} \l \Lambda B_{\l} . 
\ee 

\end{lemma}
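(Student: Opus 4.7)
The plan is to establish the general bound \eqref{Xest} by integrating by parts in the $t$-direction, and then to deduce \eqref{Xest1}--\eqref{Xest2} by inserting the specific vector fields and estimating their $H^1(\cC)$ norms via the localization.

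For \eqref{Xest}, I would move the $\p_t$-derivative off $h(\nu)^{\Si}$ onto $X$:
\begin{equation*}
\int_{\cC_t}\<\nabla_{\p_t}h(\nu)^{\Si},X\> = \int_{\Si_t}\<h(\nu)^{\Si},X\> - \int_{\cC_t}\<h(\nu)^{\Si},\nabla_{\p_t}X\> + (\text{l.o.t.}),
\end{equation*}
where the boundary term at $\Si_0$ vanishes from the assumed zero initial data, and the lower-order contributions (involving $\p_t$ of the background metric and volume form) are handled exactly as in the proof of Lemma \ref{divlemma}. For the corner term on $\Si_t$, the Sobolev trace bound \eqref{Sob2} together with the duality pairing \eqref{Sob3} give
\begin{equation*}
\Bigl|\int_{\Si_t}\<h(\nu)^{\Si},X\>\Bigr| \leq \|h(\nu)^{\Si}\|_{H^{1/2}(\Si_t)}\|X\|_{H^{1/2}(\Si_t)} \leq C\,(E_t^{\nu})^{1/2}\|X\|_{H^1(\cC)},
\end{equation*}
so after $ab\leq \d a^2 + \d^{-1}b^2$ this is bounded by $\d\max_t E_t^{\nu} + \d^{-1}\|X\|_{H^1(\cC)}^2$. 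The remaining bulk-on-$\cC_t$ term is treated identically by Cauchy--Schwarz, absorbing $\int_{\cC_t}|h(\nu)^{\Si}|^2$ into $E_t^{\nu}$ (integrated in $t$) via Gronwall as in the derivation of \eqref{1.5}.

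For the corollary \eqref{Xest1}, I set $X = (\b_{\cC}h_A)^{\Si}$ and use that the Bianchi operator $\b_{\cC}$ is first order with coefficients controlled by the background $g_{\cC}$, whence $\|(\b_{\cC}h_A)^{\Si}\|_{H^1(\cC)}^2 \leq C\|h_A\|_{H^2(\cC)}^2 \leq CJ_{\cT} \leq CJ$. For \eqref{Xest2}, I set $X = f(\d\hat A + \tfrac{1}{2}d\hat H)^{\Si}$. By Remark \ref{renorm} together with the localization estimate \eqref{lambda}, the coefficient $(\d\hat A + \tfrac{1}{2}d\hat H)^{\Si}$ is $O(\l)$ in $C^{\infty}$ (it consists of first spatial derivatives of the renormalized extrinsic data). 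The Leibniz rule then gives
\begin{equation*}
\|f(\d\hat A + \tfrac{1}{2}d\hat H)^{\Si}\|_{H^1(\cC)}^2 \leq C\l^2\Lambda^2\|f\|_{H^1(\cC)}^2,
\end{equation*}
and combining with \eqref{fJ}, i.e.\ $\|f\|_{H^1(\cC)}^2 \leq CJ_{\l}$, yields the stated bound $\d^{-1}\l\Lambda J_{\l}$ after relabeling constants (with a factor of $\l$ to spare).

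I do not expect a real obstacle: the argument is structurally parallel to the integration-by-parts computation already used in Lemma \ref{divlemma}, and the two corollaries follow by directly inserting $X$ and reading off the requisite $H^1(\cC)$ bounds from the localization and the prior estimate \eqref{fJ} on $f$. The only point that requires mild care is bookkeeping the powers of $\l$ when a spatial derivative lands on the coefficient $(\d\hat A + \tfrac{1}{2}d\hat H)^{\Si}$ versus on $f$; the worst case produces the claimed $\l\Lambda J_{\l}$ bound.
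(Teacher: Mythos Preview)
Your proposal is correct and follows essentially the same approach as the paper's proof: integrate by parts in $t$ to transfer the derivative from $h(\nu)^{\Si}$ onto $X$, bound the resulting $\Si_t$ boundary term via the Sobolev trace inequality into $E_t^{\nu}$, and bound the $\cC_t$ bulk term by Cauchy--Schwarz; then specialize $X$ for \eqref{Xest1}--\eqref{Xest2} using $\|\b_{\cC}h_A\|_{H^1(\cC)}^2 \leq C\|h_A\|_{H^2(\cC)}^2$ and the $O(\l)$ smallness of $(\d\hat A + \tfrac12 d\hat H)$ together with \eqref{fJ}. The only cosmetic differences are your use of the duality pairing \eqref{Sob3} on $\Si_t$ (the paper uses plain Cauchy--Schwarz there) and your invocation of Gronwall for the bulk term (the paper bounds $\int_{\cC_t}|h(\nu)^{\Si}|^2 \leq \max_{t'}\int_{\Si_{t'}}|h(\nu)^{\Si}|^2$ directly via trace), neither of which is a substantive deviation.
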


\begin{proof} 
To begin, transfer the time derivative $\p_t$ on $h(\nu)^{\Si}$ over to the $X$ term: 
$$\int_{\cC_t} \<\nabla_{\p_t} h(\nu)^{\Si}, X\> = \frac{d}{dt}\int_{\cC_t}\<h(\nu)^{\Si}, X\> - \int_{\cC_t}\<h(\nu)^{\Si}, \nabla_{\p_t} X\>.$$
Here as before, the $t$-derivative of the metric and the volume form give lower-order $\l$ terms which may be ignored. We have  
$$\frac{d}{dt}\int_{\cC_t}\<h(\nu)^{\Si}, X\> = \int_{\Si_t}\<h(\nu)^{\Si}, X\>  \leq \d \int_{\Si_t}|h(\nu)^{\Si}|^2 + \d^{-1}\int_{\Si_t}|X|^2.$$
By the Sobolev trace inequality \eqref{Sob2}, the first term on the right is bounded by $\d E_t^{\nu}$. 

  Next, 
$$\int_{\cC_t}\<h(\nu)^{\Si}, \nabla_{\p_t} X\> \leq \d \int_{\cC_t}|h(\nu)^{\Si}|^2 + \d^{-1}\int_{\cC_t}|\nabla_{\p_t} X|^2.$$
For the first term here,  
$$ \d \int_{\cC_t}|h(\nu)^{\Si}|^2 \leq \d \max_{t\in [0,1]}\int_{\Si_t}|h(\nu)^{\Si}|^2 \leq \d \max_{t \in [0,1]}E_t^{\nu}.$$ 
This gives \eqref{Xest}, using the Sobolev trace inequality \eqref{Sob2} for $\Si_t \subset \cC$. 

  The estimate \eqref{Xest1} then follows easily, using again the Sobolev trace inequality, as does \eqref{Xest2} via the Cauchy-Schwarz inequality, 
Sobolev trace inequality and \eqref{eef}. 
  
\end{proof}

\begin{lemma}\label{Blemma}
Set $B = A+H\g$ and note that $|B|_{C^1} \leq \l \Lambda$.Then 
$$2\int_{\cC_t}\<\nabla_{\p_t} h(\nu)^{\Si}, [B h(\nu)^{\tT}]^{\Si}\> = \frac{d}{dt}\int_{\cC_t}\<h(\nu)^{\Si}, [Bh(\nu)]^{\Si}\> - \int_{\cC_t}\<h(\nu)^{\Si}, 
[(\nabla_{\p_t} B)h(\nu)]^{\Si}\>,$$
so that 
\be \label{B}
\max_{t \in [0,1]} |\int_{\cC_t}\<\nabla_{\p_t} h(\nu)^{\Si}, [Bh(\nu)^{\tT}]^{\Si}\> | \leq \l \Lambda \max_{t \in [0,1]}E_t^{\nu}. 
 \ee
\end{lemma}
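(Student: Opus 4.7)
The identity is essentially a calculation, so the plan is to derive it by direct expansion of the $t$-derivative and then read off the estimate \eqref{B} from the resulting two terms. The main task is bookkeeping of which terms carry a factor of $\l$ through $|B|_{C^1}\le \l\Lambda$.

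First I would expand
\bes
\p_t\<h(\nu)^{\Si},[Bh(\nu)^{\tT}]^{\Si}\>
= \<\nabla_{\p_t}h(\nu)^{\Si},[Bh(\nu)^{\tT}]^{\Si}\>
+ \<h(\nu)^{\Si},\nabla_{\p_t}([Bh(\nu)^{\tT}]^{\Si})\>,
\ees
and use symmetry of $B$ (viewed as an endomorphism via $g_\cC$) together with the symmetry of the tangential projection onto $\Si$ to move $B$ across the pairing. Up to lower order $\l$-terms coming from $\p_t$ applied to the projection and to the volume form, this yields
\bes
\p_t\<h(\nu)^{\Si},[Bh(\nu)^{\tT}]^{\Si}\>
= 2\<\nabla_{\p_t}h(\nu)^{\Si},[Bh(\nu)^{\tT}]^{\Si}\>
+ \<h(\nu)^{\Si},[(\nabla_{\p_t}B)h(\nu)^{\tT}]^{\Si}\>.
\ees
Integrating over $\cC_t$, the left-hand side becomes $\frac{d}{dt}\int_{\cC_t}\<h(\nu)^{\Si},[Bh(\nu)]^{\Si}\>$ in the same sense as in Lemma \ref{Xlemma} (i.e.\ the boundary term at $\Si_0$ vanishes by the assumed vanishing of initial data), and rearranging gives the stated identity.

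For the estimate \eqref{B}, I would bound each of the two terms on the right separately. For the $\frac{d}{dt}$ term, evaluation at time $t$ gives
\bes
\Big|\tfrac{d}{dt}\int_{\cC_t}\<h(\nu)^{\Si},[Bh(\nu)]^{\Si}\>\Big|
= \Big|\int_{\Si_t}\<h(\nu)^{\Si},[Bh(\nu)]^{\Si}\>\Big|
\le |B|_{C^0(\Si_t)}\int_{\Si_t}|h(\nu)^{\Si}|^2,
\ees
and the Sobolev trace inequality \eqref{Sob2} together with $|B|_{C^0}\le\l\Lambda$ controls this by $\l\Lambda\,\max_{t\in[0,1]}E_t^{\nu}$. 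For the second term, the pointwise bound $|\nabla_{\p_t}B|_{C^0}\le\l\Lambda$ gives
\bes
\Big|\int_{\cC_t}\<h(\nu)^{\Si},[(\nabla_{\p_t}B)h(\nu)]^{\Si}\>\Big|
\le \l\Lambda\int_0^t\int_{\Si_{t'}}|h(\nu)^{\Si}|^2\,dt'
\le \l\Lambda\,\max_{t\in[0,1]}E_t^{\nu},
\ees
again by \eqref{Sob2}. Combining the two and dividing by $2$ produces \eqref{B}.

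I do not expect any genuine obstacle here: the whole content is that \emph{both} $B$ and $\nabla_{\p_t}B$ are $O(\l)$ in the localized setting of \S 2.4, which lets the $t$-integrated term be absorbed into $\l\Lambda\max_t E_t^{\nu}$ rather than contributing at full strength. The only mild care required is in the first step, to verify that the terms coming from $\p_t$ applied to the tangential projection $(\cdot)^{\Si}$ and to the volume form on $\cC_t$ are of order $\l$ and hence can be lumped together with the stated $(\nabla_{\p_t}B)$ term, as done throughout \S 4.
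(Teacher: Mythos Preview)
Your proposal is correct and follows essentially the same approach as the paper: derive the identity by applying the Leibniz rule to $\p_t\<h(\nu)^{\Si},[Bh(\nu)]^{\Si}\>$ and using the symmetry of $B$ to double the main term, then obtain \eqref{B} by bounding the $\Si_t$-boundary term via the trace inequality \eqref{Sob2} and the $\cC_t$-integral term using $|\nabla_{\p_t}B|\le\l\Lambda$. The paper's own proof is terser (it simply says ``proceed in the same way as in Lemma \ref{Xlemma}'' for the estimate), but your explicit bounds on each term match exactly what that instruction unpacks to.
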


\begin{proof}
We have 
$$\int_{\cC_t}\<\nabla_{\p_t} h(\nu)^{\Si}, Bh(\nu)^{\Si}\> = \frac{d}{dt}\int_{\cC_t}\<h(\nu)^{\Si}, Bh(\nu)^{\Si}\> - \int_{\cC_t}\<h(\nu)^{\Si}, 
\nabla_{\p_t} Bh(\nu)^{\Si}\> = $$
$$\frac{d}{dt}\int_{\cC_t}\<h(\nu)^{\Si}, Bh(\nu)^{\Si}\> - \int_{\cC_t}\<h(\nu)^{\Si}, B\nabla_{\p_t} h(\nu)^{\Si}\> - \int_{\cC_t}\<h(\nu)^{\Si}, 
(\nabla_{\p_t} B)h(\nu)^{\Si}\> = $$
$$\frac{d}{dt}\int_{\cC_t}\<h(\nu)^{\Si}, B)h(\nu)^{\Si}\> - \int_{\cC_t}\<\nabla_{\p_t} h(\nu)^{\Si}, Bh(\nu)^{\Si}\> - \int_{\cC_t}\<h(\nu)^{\Si}, 
(\nabla_{\p_t} B)h(\nu)^{\Si}\> .$$
This gives the first statement. For the second statement, proceed in the same way as in Lemma \ref{Xlemma}. 

\end{proof}

\bibliographystyle{plain}

\end{document}